\documentclass[reqno]{amsart}
\usepackage{amssymb}
\usepackage{amsfonts}
\usepackage{mathrsfs}
\usepackage{bbm}
\usepackage{xcolor}
\theoremstyle{plain}
    \newtheorem{theorem}{Theorem}[section]
    \newtheorem*{theorem*}{Theorem}
    \newtheorem{lemma}[theorem]{Lemma}
    \newtheorem{proposition}[theorem]{Proposition}
    \newtheorem{corollary}[theorem]{Corollary}

\theoremstyle{definition}
    \newtheorem{definition}{Definition}[section]
    \newtheorem{remark}{Remark}[section]
    
\numberwithin{equation}{section}

\renewcommand{\r}{\right}
\newcommand{\m}{\mbox{\textnormal{mid}}}
\begin{document}
%%%%%%%%%%%%%%%%%%%%%%%%%%%%%%%%%%%%%%%%%%%%%%%%%%%%%%%%

\title[]{Long-time asymptotics of (1,3)-sign solitary waves for the damped nonlinear Klein-Gordon equation}
\author[K. Ishizuka]{Kenjiro Ishizuka}
\address[K.Ishizuka]{Azabu Junior and Senior High School, Tokyo, Japan}
\email{k-ishizuka@azabu-jh.net}
\date{\today}
%\keywords{damped wave equation, dissipation, Strichartz estimates, energy critical}
\date{}
\keywords{Nonlinear Klein-Gordon equation, Solitons, Soliton resolution, Multi-solitons, Long-time asymptotics, Regular polygon, Rigidity}
%\subjclass[2020]{35Q55,35B44 etc.}
\maketitle

\begin{abstract}
We consider the damped nonlinear Klein-Gordon equation
\begin{align*}
\partial_{t}^2u-\Delta u+2\alpha \partial_{t}u+u-|u|^{p-1}u=0, \ & (t,x) \in \mathbb{R} \times \mathbb{R}^d,
\end{align*}
where $\alpha>0$, $2\leq d\leq 5$ and energy sub-critical exponents $p>2$. In this paper, we prove that any solution that is asymptotic to a superposition of four solitons with exactly one soliton of opposite sign evolves so that the three like-signed solitons spread out in an equilateral-triangle configuration centered at the oppositely signed soliton.
\end{abstract}

\tableofcontents
%%%%%%%%%%%%%%%%%%%%%%%%%%%%%%%%%%%%%%%%%%

\section{Introduction}
\subsection{Setting of the problem}
We consider the following damped nonlinear Klein-Gordon equation
\begin{align}
\label{DNKG}
\tag{DNKG}
	\left\{
	\begin{aligned}
		&\partial_{t}^2u-\Delta u+2\alpha \partial_{t}u+u-f(u)=0, & (t,x) \in \mathbb{R} \times \mathbb{R}^d,
		\\
		&\left(u(0,x),{\partial}_tu(0,x)\right)=\left(u_{0}(x), u_1(x)\right)\in \mathcal{H},
	\end{aligned}
	\r.
\end{align}
where $2\leq d\leq 5$,  $\mathcal{H}=H^1(\mathbb{R}^d)\times L^2(\mathbb{R}^d)$, $f(u)=|u|^{p-1}u$, with a power $p$ in the energy sub-critical range, namely
\begin{align*}
2<p<\infty \ \mbox{for}\ d=2\ \mbox{and}\ 2<p<\frac{d+2}{d-2}\ \mbox{for}\ d=3,4,5.
\end{align*}
It follows from \cite{BRS} that the Cauchy problem for \eqref{DNKG} is locally well-posed in the energy space $\mathcal{H}$. Moreover, defining the energy of a solution $\vec{u}=(u, {\partial}_tu)$ by
\begin{align*}
E(\vec{u}(t))=\frac{1}{2}\|\vec{u}(t)\|_{\mathcal{H}}^2-\frac{1}{p+1}\| u(t)\|_{L^{p+1}}^{p+1},
\end{align*}
we have
\begin{align}\label{energydecay}
E(\vec{u}(t_2))-E(\vec{u}(t_1))=-2\alpha \int_{t_1}^{t_2} \| {\partial}_tu(t)\|_{L^2}^2dt.
\end{align}
In addition, the equation \eqref{DNKG} enjoys several invariances:
\begin{align}\label{invariance}
\begin{aligned}
u(t,x)&\mapsto -u(t,x),
\\
u(t,x)&\mapsto u(t,x+y),
\\
u(t,x)&\mapsto u(t+s,x),
\end{aligned}
\end{align}
where $y\in \mathbb{R}^d$ and $s\in \mathbb{R}$.
%More recently, Burq, Raugel, and Schlag \cite{BRS} proved the soliton resolution for \eqref{DNKG} in the full limit $t\to\infty$ for all radial solutions. 

\subsection{Long-time asymptotics of multi-soliton solutions}
 In this paper, we are interested in the dynamics of multi-soliton solutions related to the ground state $Q$, which is the unique positive, radial $H^1(\mathbb{R}^d)$ solution of 
\begin{align}\label{Qode}
-\Delta Q+Q-f(Q)=0.
\end{align}
(See \cite{K2}). The ground state generates the stationary solution $(Q,0)$. By \eqref{invariance}, the function $(-Q,0)$ as well as any translate $(Q(\cdot+y),0)$ is also a solution of \eqref{DNKG}. There have been intensive studies on global behavior of general (large) solutions for nonlinear dispersive equations, where the guiding principle is the \textit{soliton resolution conjecture}, which claims that generic global solutions are asymptotic to superposition of solitons for large time. For \eqref{DNKG}, Feireisl \cite{F} proved the soliton resolution along a time sequence: for any global bounded solution $\vec{u}$ of \eqref{DNKG}, there exist a time sequence $t_n\to \infty$ and a set of stationary solutions $\{ \varphi_k\}_{k=1}^K$ ($K\geq 0$), and sequences $\{ c_{k,n}\}_{k=1,2,\ldots, K,n\in \mathbb{N}}$ such that
\begin{align*}
\lim_{n\to\infty} \left( \|u(t_n)-\sum_{k=1}^K\varphi_k(\cdot -c_{k,n})\|_{H^1}+\|{\partial}_tu(t_n)\|_{L^2}\right)=0,
\\
\lim_{n\to\infty} \left(\min_{j\neq k} |c_{j,n}-c_{k,n}|\right)=\infty.
\end{align*}
In particular, in the $d=1$ case, since the stationary solutions of \eqref{DNKG} consist only of translates of $\pm Q$, C\^{o}te, Martel, and Yuan \cite{CMY} proved that if $\vec{u}$ is a global solution of \eqref{DNKG}, there exist a sign $\sigma=\pm 1$, $K\geq 0$, and functions $z_k:[0,\infty)\to \mathbb{R}$ such that 
\begin{align*}
\lim_{t\to\infty} \left(\| u(t)-\sigma \sum_{k=1}^K (-1)^kQ(\cdot-z_k(t))\|_{H^1}+\|{\partial}_tu(t)\|_{L^2}\right)&=0,
\\
\lim_{t\to\infty} \left(z_{k+1}(t)-z_k(t)\right)&=\infty.
\end{align*}
Prior to the present result, C\^{o}te, Martel, Yuan, and Zhao \cite{CMYZ} proved in general space dimension that any two-soliton superposition built from $Q$ must consist of solitons of opposite signs, and that the solitons move so that their centers evolve along a fixed straight line. In particular, this result makes the interaction forces acting on the solitons explicit, and suggests that like-signed solitons attract whereas oppositely signed solitons repel. Using this force law, \cite{CMY} showed that in one space dimension, in a multi-soliton superposition the signs of neighboring solitons must be opposite.

On the other hand, when the space dimension is at least two, two types of difficulties arise. The first is that, in dimensions $d\geq 2$, there exist many different kinds of stationary solutions (here “different” means inequivalent even after allowing for spatial translations and sign changes). The second is that soliton configurations depend on relative directions as well as distances. When considering a superposition of copies of $Q$,  each soliton feels an attractive or repulsive interaction from other nearby solitons. In \cite{CMY}, thanks to the restriction to one space dimension, the centers of the solitons in such a superposition lie on a single line, so it suffices to examine the interactions with adjacent solitons. By contrast, in dimensions $d\geq 2$, three or more solitons can be simultaneously close, and one must analyze not only the distances but also the relative directions. Despite these difficulties, there has been notable recent progress on multi-soliton solutions that can be described as superpositions of copies of  $Q$. As mentioned above, C\^{o}te, Martel, Yuan, and Zhao \cite{CMYZ} showed that, in general space dimension, a two-soliton superposition experiences essentially no “rotational” (i.e., tangential) interaction, and that the inter-soliton force is attractive for like signs and repulsive for opposite signs. Subsequently, the author \cite{I2} showed that three-soliton superpositions are necessarily collinear.  Indeed, it is known that any solution asymptotic to a superposition of three solitons must contain exactly one soliton whose sign is opposite to that of the other two (see \cite{CD}). In this case, unless the three solitons are collinear, the oppositely signed soliton is pushed outward by repulsion from the other two; consequently, the attraction between the like-signed solitons dominates the repulsion between oppositely signed solitons, and the configuration cannot sustain a three-soliton state. We also note that, as a constructive approach, C\^{o}te and Du \cite{CD} imposed symmetry conditions and constructed solutions whose soliton centers form highly symmetric configurations, such as an equilateral triangle or a regular tetrahedron.

\subsection{Main result}
First, we define solutions that behave as the superposition of ground states.
\begin{definition}\label{defKsoli}
A solution $\vec{u}$ of \eqref{DNKG} is called a \textit{$K$-solitary waves} for $K\in\mathbb{N}$ if there exist $\sigma \in \{-1,1\}^K$ and $z:[0,\infty)\to (\mathbb{R}^d)^K$ such that 
\begin{align}
\label{Ksoli}
\begin{aligned}
\lim_{t\to \infty} \left(\|u(t)-\sum_{k=1}^K\sigma_k Q(\cdot-z_k(t))\|_{H^1}+\|{\partial}_tu(t)\|_{L^2}\right)&=0,
\\
\lim_{t\to\infty}\left( \min_{j\neq k}|z_j(t)-z_k(t)|\right)&=\infty.
\end{aligned}
\end{align}
\end{definition}

\begin{remark}
In space dimension $d=1$, thanks to the damping term $\alpha$, Definition \ref{defKsoli} remains equivalent to the original definition even if it is replaced with convergence in the sense of time sequences $t_n\to\infty$. On the other hand, in space dimension $d\geq 2$, even if a solution converges along a sequence of times, it is not clear whether it remains uniformly bounded in $\mathcal{H}$ for all $t$. Hence, the equivalence of the two notions is not known. In this paper, we are interested in the long-time behavior of the positions of solitons. Therefore, we adopt Definition \ref{defKsoli}.
\end{remark}
\begin{remark}
Hereafter, unless stated otherwise, when we discuss $K$-solitary waves we implicitly assume that it is a solution of \eqref{DNKG}.
\end{remark}

In particular, in this paper we focus on 4-solitary waves in which exactly one solitary wave has the opposite sign from the other three, that is, superpositions consisting of three solitons of one sign and one soliton of the other sign. Accordingly, in order to distinguish not only the number of solitary waves but also whether each component is a copy of $Q$ or of $-Q$, we introduce the notion of $(m,n)$-sign solitary waves as follows.
\begin{definition}\label{defm,nsoli}
A solution $\vec{u}$ of \eqref{DNKG} is called a \textit{$(m,n)$-sign solitary waves} for $m,n\in\mathbb{N}\cup\{0\}$ if there exist $z_+:[0,\infty)\to (\mathbb{R}^d)^m$ and $z_-:[0,\infty)\to(\mathbb{R}^d)^n$ such that 
\begin{align}
\label{mnsoli}
\begin{aligned}
\lim_{t\to \infty} \left(\|u(t)-\sum_{k=1}^mQ(\cdot-z_{+,k}(t))+\sum_{k=1}^n Q(\cdot-z_{-,k}(t))\|_{H^1}+\|{\partial}_tu(t)\|_{L^2}\right)&=0,
\\
\lim_{t\to\infty}\left( \min_{(\circ,j)\neq (\bullet,k)}|z_{\circ,j}(t)-z_{\bullet,k}(t)|\right)&=\infty.
\end{aligned}
\end{align}
\end{definition}

\begin{remark}
Since \eqref{DNKG} is invariant under the sign change $u\mapsto -u$, any property established for $(m,n)$-sign solitary waves solutions immediately yields the corresponding property for $(n,m)$-sign solitary waves solutions. It was also proved in \cite{CD} that no $(m,0)$-sign or $(0,m)$-sign solitary waves solutions exist.
\end{remark}
%\begin{remark}文法怪しい
%Since \eqref{DNKG} is invariant under the sign change $u\mapsto -u$, any property established for $(m,n)$-sign solitary waves  solutions immediately yields the corresponding property for $(n,m)$-sign solitary waves solutions. It was also proved in \cite{CD} that neither a $(m,0)$-sign solitary waves nor a $(0,m)$-sign solitary waves exist.
%\end{remark}
\begin{remark}
Hereafter, unless stated otherwise, when we discuss $(m,n)$-sign solitary waves we implicitly assume that it is a solution of \eqref{DNKG}.
\end{remark}

%\begin{remark}
%For \eqref{DNKG}, thanks to the damping term $\alpha$, Definition \ref{defKsoli} remains equivalent to the original definition even if it is replaced with convergence in the sense of time sequences $t_n\to\infty$. Therefore, to facilitate the subsequent discussion, we adopt Definition \ref{defKsoli}. The equivalence of the two definitions can be demonstrated using classical compactness arguments. For further details, we refer to \cite[Section 3]{I}.
%\end{remark}
In this paper, we are interested in $(1,3)$-sign solitary waves solutions. By imposing suitable symmetries, C\^{o}te and Du \cite{CD} constructed a $(1,3)$-sign solitary waves for which the center of the $+Q$-soliton converges to a point, while the centers of the remaining three solitons spread out so as to form an expanding equilateral triangle. In the present work, we prove that this geometric behavior does not rely on any symmetry assumption: if a solution is a $(1,3)$-sign solitary waves, then the center of the $+Q$-soliton converges, and the other three soliton centers diverge in an equilateral-triangle configuration with the $+Q$-soliton as the center of similarity.
Before stating our main theorem, we define spreading out in an equilateral-triangle configuration by introducing the following terminology.
\begin{definition}\label{seisankakukei}
Let $\omega_1,\omega_2,\omega_3\in S^{d-1}$. We say that $\omega_1,\omega_2,\omega_3$ form an \textit{equilateral triple} if
\begin{align*}
\omega_1+\omega_2+\omega_3=0.
\end{align*}
\end{definition}

\begin{remark}
If $\omega_1,\omega_2,\omega_3\in S^{d-1}$ form an equilateral triple, then they lie in a common
two-dimensional subspace of $\mathbb{R}^d$ (indeed, $\omega_3=-(\omega_1+\omega_2)$). 
Moreover, $\omega_1,\omega_2,\omega_3$ satisfy
\begin{align*}
|\omega_1-\omega_2|=|\omega_2-\omega_3|=|\omega_3-\omega_1|.
\end{align*}
%In fact, $\omega_i\cdot \omega_j=-\frac12$ for $i\neq j$, and hence
%\begin{equation*}
%|\omega_i-\omega_j|^2=2-2\,\omega_i\cdot \omega_j=3
%\quad\text{for } i\neq j.
%\end{equation*}
\end{remark}

Then, our main result is as follows:
\begin{theorem}\label{maintheorem}
If a solution $\vec{u}$ of \eqref{DNKG} is a $(1,3)$-sign solitary waves, there exist $z_{\infty}\in \mathbb{R}^d$, an equilateral triple $\omega_1,\omega_2,\omega_3\in S^{d-1}$, and functions $z_k:[0,\infty)\to \mathbb{R}^d$ for $k=1,2,3$ such that for all $t>2$,
\begin{align}\label{3soli}
\left\| u(t)-\left(Q(\cdot-z_{\infty})-\sum_{k=1}^3 Q(\cdot-z_k(t))\right)\right\|_{H^1}+\|{\partial}_tu(t)\|_{L^2}\lesssim t^{-1},
\end{align}
and for $k=1,2,3$,
\begin{align}\label{zestheorem}
z_k(t)=z_{\infty}+\left( \log{t}-\frac{d-1}{2}\log{(\log{t})}+c_0\right)\omega_k+O\left(\frac{\log{(\log{t})}}{\log{t}}\right),
\end{align}
where $c_0$ is a constant depending only on $d,\ \alpha$, and $p$.
\end{theorem}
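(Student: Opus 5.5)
We follow the modulation strategy of \cite{CMYZ,I2}. First we decompose $u=Q(\cdot-z_0)-\sum_{k=1}^3Q(\cdot-z_k)+\varepsilon$, with $z_0=z_{+,1}$ and $z_k=z_{-,k}$ chosen so that $\varepsilon$ is orthogonal to $\nabla Q(\cdot-z_j)$. We also introduce the unstable-mode coefficients $a_j^\pm$, coming from the negative eigenvalue of $-\Delta+1-f'(Q)$. Writing $D(t)=\min_{j\neq k}|z_j-z_k|$ and $\ell_{jk}=z_j-z_k$, the standard energy--coercivity argument (damped energy functional plus control of the unstable directions, which stay small because the solution is global and converges to a $4$-soliton) gives
\[
\|\vec\varepsilon\|_{\mathcal H}^2\lesssim \sup_{s\ge t} e^{-2D(s)}D(s)^{-(d-1)}\ \text{(up to damping averages)},
\]
together with the modulation equations
\[
\dot z_j=y_j+O(\|\vec\varepsilon\|^2),\qquad \dot y_j+2\alpha y_j=-c_Q\sum_{k\neq j}\sigma_j\sigma_k\frac{\ell_{jk}}{|\ell_{jk}|}g(|\ell_{jk}|)+O(\|\vec\varepsilon\|^2).
\]
Here $g(r)\sim r^{-(d-1)/2}e^{-r}$ is the interaction function of \cite{CMYZ}. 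The sign convention is that like signs attract and opposite signs repel. Since the damping is strong, we can eliminate $y_j$ to get a first-order gradient-like system $2\alpha\dot z_j\approx F_j(z)$.

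The core step is the geometric analysis of this ODE. Let $r_k=|z_k-z_0|$ and $\omega_k(t)=(z_k-z_0)/r_k$ for $k=1,2,3$. We argue first that $z_0$ is asymptotically the closest neighbour of each $-Q$ soliton. Suppose instead that some distance $|z_j-z_k|$ between $-Q$'s is not much bigger than $\min_k r_k$. Then the attraction between like signs dominates, and the distance $|z_j-z_k|$ decreases, contradicting $D\to\infty$. This mirrors the collinearity argument of \cite{I2}, and we would use a Lyapunov-type quantity such as $\sum_{j<k}e^{-|z_j-z_k|}$ versus $\sum_k e^{-r_k}$.

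Next, the repulsion from $z_0$ drives $r_k\to\infty$ at rate $\dot r_k\approx c\,g(r_k)$, which yields $r_k\approx\log t-\frac{d-1}{2}\log\log t+c_0$. The attraction among the $-Q$'s, of size $g(|z_j-z_k|)$, acts tangentially, and relative to the radial repulsion it is comparable only if $|z_j-z_k|\approx r$. The angular equation therefore reads
\[
\dot\omega_k\approx \frac{1}{r_k}\sum_{j\neq k}P_{\omega_k^\perp}(\omega_j)\,g(r|\omega_j-\omega_k|),
\]
and it pushes the $\omega$'s to maximize their minimal mutual distance. The minimizer of $\sum_{j<k}e^{-r|\omega_j-\omega_k|}$ over three unit vectors is the equilateral triple, where all pairwise distances equal $\sqrt3$. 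Meanwhile the force on $z_0$, $\sum_k\omega_k g(r_k)$, must balance to keep $z_0$ bounded. This forces $\sum_k\omega_k\to0$ and $|r_j-r_k|\to0$, and integrability of $\dot z_0\lesssim t^{-1}\cdot(\text{imbalance})$ gives $z_0\to z_\infty$.

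The main obstacle is the rigidity step: showing that $\omega_k$ converge to an equilateral triple, rather than drifting or approaching a degenerate configuration, with the rates needed for \eqref{zestheorem}. The tangential forces are of order $e^{-\sqrt3 r}\approx t^{-\sqrt3}$, while the radial imbalance is of order $t^{-1}$ times the deviation, so the dynamics has two time scales. First we would linearize around the equilateral configuration. Radial imbalance $\delta r$ produces a restoring motion $\dot{\delta r}\approx -t^{-1}\delta r$, which together with $r\sim\log t$ gives an $O(\log\log t/\log t)$ error. The angular deviation instead couples through the $z_0$ balance: an asymmetric $\sum\omega_k\neq0$ creates a net force on $z_0$ of size $t^{-1}|\sum\omega_k|$. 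That force must be integrable, and combining this with the monotone angular dynamics gives convergence of $\omega_k$. Finally we would convert the resulting asymptotics $\|\vec\varepsilon\|\lesssim e^{-r}r^{-(d-1)/2}\sim t^{-1}$ into \eqref{3soli}, and fix the constant $c_0$ by integrating $\dot r=\frac{c}{2\alpha}r^{-(d-1)/2}e^{-r}$ exactly.
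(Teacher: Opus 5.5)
There is a genuine gap in your core rigidity step, and the mechanism you propose there is not the right one.

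\textbf{The mechanism you propose cannot select the equilateral triple.} Like-signed solitons attract. So your tangential term $\frac{1}{r_k}\sum_{j\neq k}P_{\omega_k^\perp}(\omega_j)\,g(r|\omega_j-\omega_k|)$ pulls the directions \emph{together}; it does not push them toward maximal separation. More decisively, once your first step is done, these forces are too small to matter. The paper proves $\tilde D-D>D^{1/5}$, hence $\mathcal{F}(\tilde D)\lesssim e^{-D^{1/5}}\mathcal{F}(D)$. Near the final configuration these forces are of size $t^{-\sqrt3}$, as you note yourself, so they are integrable in time even before dividing by $r\sim\log t$. An integrable angular drift only makes the $\omega_k$ converge to \emph{some} limit.

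\textbf{Your fallback is circular.} You argue that the force $\sum_k\omega_k g(r_k)$ on $z_0$ ``must balance to keep $z_0$ bounded'' and is therefore integrable. But nothing bounds $z_0$ a priori. Once the like-sign terms are negligible, $\sum_{k=0}^3 z_k$ converges. So boundedness of $z_0$ amounts to boundedness of $Z_1+Z_2+Z_3=\sum_k\rho_k u_k$, which is essentially the conclusion you want.

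\textbf{The missing idea: the angles move because $z_0$ recoils.} The leading-order angular motion comes from the tangential parts of $\dot Z_k=2\mathcal{F}(\rho_k)u_k+\sum_{j\neq k}\mathcal{F}(\rho_j)u_j$. These give $\dot c_{jk}$ of order $\mathcal{F}(L)/L\sim(t\log t)^{-1}$, which is \emph{not} integrable. The paper first slaves the radii to the angles, $\rho_k-L\to\tilde a_k(c)$, via the Lyapunov functional $\mathcal{L}$. This closes into an ODE for $(c_{12},c_{13},c_{23})$ (Lemma \ref{naisekiyokusuru}). For that ODE the equilateral point is \emph{repelling}: by Lemma \ref{naisekiest127}, the largest $c_{jk}+\frac12$ grows at rate $\gtrsim\frac{\mathcal{F}(L)}{L}\bigl(\max c_{jk}+\frac12\bigr)$.

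Rigidity comes from pitting this non-integrable growth against the a priori bound $\max c_{jk}<\frac13$, which the like-sign separation forces. The deviation is then refined to $O((\log t)^{-3})$. Only after that does one get $z_0\to z_\infty$, by comparing $W=Z_1+Z_2+Z_3$ (relative growth rate $4\mathcal{F}(L)/L$) with $\mathcal R$ (relative growth rate $\mathcal{F}(L)/L$). Finally, $\omega_1+\omega_2+\omega_3=0$ follows from barycenter conservation.

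\textbf{Your first step is also mis-described.} When $\tilde D-\hat D$ is $\gg1$ but below $D^{1/5}$, the like-sign attraction is negligible and $\rho_{jk}$ actually increases, so there is no contradiction with $D\to\infty$. The paper instead shows that $\hat D$ outgrows suitable averages of the $\rho_{jk}$ (Lemmas \ref{henchousystem}, \ref{henchoumid2}, \ref{minhenchou}). This contradicts the energy bound $\hat D-C_\circ<\tilde D$, which comes from $\liminf V/\mathcal{F}(D)\ge0$. Also, the estimate $\|\vec\varepsilon\|_{\mathcal H}\lesssim\mathcal{F}(D)$ that you assume at the outset is only available after this step, since it requires \eqref{Vrep}.
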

\begin{remark}
If $\vec{u}$ is a $(3,1)$-sign solitary waves, then $-\vec{u}$ is a $(1,3)$-sign solitary waves by \eqref{invariance}. Therefore, by Theorem \ref{maintheorem}, the long-time dynamics of the soliton centers for a $(3,1)$-sign solitary waves are the same as those for a $(1,3)$-sign solitary waves.
\end{remark}

%C\^{o}te and Du obtained a $(1,3)$-sign solitary waves by imposing suitable group symmetries, producing an expanding equilateral triangle of three soliton centers around a fourth one. In contrast, our main result proves that this geometry is intrinsic and does not rely on symmetry: every (1,3)-sign solitary waves necessarily exhibits the same equilateral-triangle asymptotic configuration, with the distinguished soliton center converging and the other three centers diverging with it as the center of similarity.

\begin{remark}
When $d=1$, $(1,3)$-sign solitary waves solutions do not exist. This is because a $(1,3)$-sign solitary waves corresponds to a solution that spreads in three distinct directions; however, in the case $d=1$, there are only two possible directions (positive direction and negative direction). 
\end{remark}

\begin{remark}
To completely characterize the long-time dynamics of four solitary waves solutions, it is also necessary to understand the long-time behavior of $(2,2)$-sign solitary waves solutions.
In contrast to the $(1,3)$-sign case, the $(2,2)$-sign configuration has no distinguished (reference) soliton---there is no unique soliton whose sign differs from all the others---and this makes the asymptotic analysis more delicate.
%We conjecture that a $(2,2)$-sign solitary wave exhibits one of the following geometric behaviors: either the four soliton centers are collinear and ordered so that adjacent solitons have opposite signs, or the four centers form the vertices of a rhombus, with solitons of the same sign lying at opposite vertices (equivalently, each diagonal connects solitons of the same sign).
\end{remark}

\subsection{Difficulties and Ideas for the Main Result}
Determining the long-time behavior of a four-soliton configuration is more difficult than in the case of a superposition of three or fewer solitons. Indeed, since each soliton is acted on by forces generated by the other three solitons, one cannot rule out a priori the possibility that the soliton centers exhibit complicated long-time motions. For instance, in the three-soliton case, \cite{I2} shows that it suffices to first prove that the two distances from the oppositely signed soliton to the other two become comparable. In that regime, if the three centers are not collinear, then the oppositely signed soliton is pushed outward by the repulsive forces exerted by the other two, and the apex angle of the corresponding isosceles triangle decreases. By contrast, in the $(1,3)$-sign solitary waves setting considered here, the forces act from three different directions, so that even the force exerted on the oppositely signed soliton by two of the solitons may be canceled by the force coming from the remaining soliton.
Because the inter-soliton forces are so delicate, it is conceivable a priori that, for $(1,3)$-sign solitary waves, the three like-signed solitons might spread out in a non-equilateral configuration---for instance, forming a triangle with angles $50^\circ$, $60^\circ$, and $70^\circ$---rather than an equilateral one. In this paper, to study the motion of the soliton centers, we exploit the fact that the mutual distances between solitons evolve in a way that is coupled to the geometry of the configuration, in particular to the angles formed by the centers.
More precisely, taking the oppositely signed soliton as a reference, we consider the three directions along which the other solitons separate, and introduce the associated unit direction vectors; the angles between these directions are encoded by the inner products of these unit vectors.
We then derive an effective system of ordinary differential equations for these angular variables and reduce it to a closed ODE system that can be written solely in terms of the three pairwise inner products. The resulting dynamical system can be analyzed by an elementary  phase-portrait analysis.
A key idea behind this reduction is a separation of time scales: the angular variables evolve much more slowly than the radial distances, so that, to leading order, one may treat the distances as evolving as if the angles were frozen.

\subsection{Previous results}

First, we introduce results related to the soliton resolution conjecture. The soliton resolution conjecture has been studied for the energy-critical wave equation. We refer to \cite{DKM1,JL4}, in which the conjecture was proven for all radial solutions in space dimension $d\geq 3$. Historically, we refer to \cite{CDKM, DJKM, DKM1, DKM2}, where the conjecture has been proved in space dimension $d=3,5,7,\ldots$ or $d=4,6$ using the method of channels. In recent years, Jendrej and Lawrie \cite{JL4} proved the conjecture in space dimension $d\geq 4$ using a novel argument based on the analysis of collision intervals. Similar results regarding the conjecture have been obtained for cases with a potential \cite{JLX,LMZ}, with a damping term \cite{GZ}, as well as for wave maps \cite{DKMM,JL1}, for the heat equation \cite{A1}, all under some rotational symmetry. In recent years, in order to gain a finer understanding of the long-time dynamics, there has been growing interest in the analysis of intermediate (or transient) states that arise before the final soliton-resolution regime as $t\to+\infty$. For energy-critical wave equations, see \cite{S1, S2}, and for the heat equation, see \cite{A2}.

On the other hand, the conjecture remains widely open for the nonlinear Klein-Gordon equation, namely the $\alpha=0$ case. Nakanishi and Schlag \cite{NS} proved the conjecture as long as the energy of the solution is not much larger than the ground state energy in space dimension $d=3$. 
For the damped nonlinear Klein-Gordon equation case, the energy decay \eqref{energydecay} makes the analysis simpler than the undamped case. Historically, Keller \cite{K1} constructed stable and unstable manifolds in a neighborhood of any stationary solution of \eqref{DNKG}, and Feireisl \cite{F} proved the conjecture along a time sequence. Burq, Raugel and Schlag \cite{BRS} proved the conjecture in the full limit $t\to\infty$ for all radial solutions. In particular, C\^{o}te, Martel and Yuan \cite{CMY} proved the conjecture for $d=1$ without size restriction and radial assumption for a non-integrable equation. In a result without any size or symmetry restriction, Kim and Kwon \cite{KK} proved the conjecture for Calogero-Moser derivative nonlinear Sch\"{o}dinger equation. On the other hand, in the case of general dimensions $d\geq 2$, it remains unclear how solitary waves are arranged and how they move. One reason is that, in general dimensions, it is not known whether solutions remain bounded in
the energy space $\mathcal H$; another is that \eqref{DNKG} admits a wide variety of stationary solutions.

 %On the other hand, in the case of general dimensions $d\geq 2$, it remains unclear how solitary waves are arranged and how they move. This is because the ODE system for the centers of the solitary waves becomes more complex in general dimensions. 
 
 In general space dimension, C\^{o}te, Martel, Yuan, and Zhao \cite{CMYZ} proved that 2-solitary waves with same sign do not exist. Furthermore, they constructed a Lipschitz manifold in the energy space with codimension 2 of those solutions asymptotics to 2-solitary waves with the opposite signs. Subsequently, the author and Nakanishi \cite{IN} provided a complete classification of solutions into 5 types of global behavior for all initial data in a small neighborhood of each superposition of two ground states with the opposite signs. Additionally, results concerning an excited state of \eqref{Qode} (stationary solution) can be found in \cite{CY}. Recently, C\^{o}te and Du \cite{CD} constructed multi-solitary waves that asymptotically resemble various objects by imposing group-symmetric assumptions. This result has facilitated the study of the global behavior of multi-solitary waves in general dimensions.

A crucial aspect of analyzing multi-solitary waves is understanding the movement of their centers caused by interactions between solitary waves. In particular, a 2-solitary waves with the same sign does not exist because the interaction between them generates an attractive force. Moreover, the author \cite{I2} proved that, in the case of $3$-solitary waves, the three solitons become collinear, with only the middle soliton having the opposite sign. The long-time dynamics force a collinear configuration in which exactly one soliton has the opposite sign and this oppositely signed soliton lies between the other two. In \cite{I}, the author also constructed 2-solitary waves with the same signs for the damped nonlinear Klein-Gordon equation with a repulsive delta potential in $d=1$. This is because the repulsive force of the potential dominates the attractive interaction between the solitons.

 In addition, several studies have been conducted on multi-solitary waves for the nonlinear dispersive equations. For the nonlinear Klein-Gordon equation, C\^{o}te and Mu\~{n}oz \cite{CM} constructed multi-solitary waves based on the ground state. Furthermore, C\^{o}te and Martel \cite{CM1} extended the work of \cite{CM} by constructing multi-solitary waves based on the stationary solution of the nonlinear Klein-Gordon equation. Notably, multi-solitary waves usually behave asymptotically as the solitary waves moving at distinct constant speeds. More recently, Aryan \cite{A} has constructed a 2-solitary wave where the distance between each solitary wave is $2(1+O(1))\log{t}$ as $t\to \infty$. There are several results regarding multi-solitary waves with logarithmic distance; see \textit{e.g.} \cite{GI,MN,N1,N2}. Notably, for the damped nonlinear Klein-Gordon equation, multi-solitary waves of \eqref{DNKG} cannot move at a constant speed by the damping $\alpha$. A distinctive feature of the present work is that it provides a complete rigidity description for the long-time dynamics of multi-soliton solutions. Most of the aforementioned results on multi-solitons have focused primarily on construction and existence. Among them, the results in \cite{CD} and \cite{MR} concern regular polygonal configurations for dispersive equations. In \cite{CD}, the existence of such a regular polygon configuration is obtained by imposing symmetry assumptions on the solutions. In \cite{MR}, the authors constructed a multi-bubble (blow-up bubble) solution which is asymptotic to a superposition of concentrating bubbles centered at the vertices of a regular polygon. In both works, symmetry is imposed in order to simplify the interactions between solitons, since without symmetry the interactions become significantly more involved. To the best of my knowledge, the present paper is the first to show, in the context of dispersive equations, that the centers of multi-solitons are forced to form an equilateral triangle.

%To the best of my knowledge, the present paper is the first to show that the centers of multi-solitons are forced into an equilateral-triangle configuration for dispersive equations.
%本研究の特筆すべき点は,\ 多重ソリトンの長時間挙動において,\ その剛性を完全に解明している点である.\ 多重ソリトンに関する上述の結果は基本的には構成と存在に関する結果が中心であった.\ その中でも,\ \cite{CD}と\cite{MR}の結果は分散型方程式の正多角形に関する結果になっている.\ \cite{CD}においては解に対称性を課すことで正多角形の配置の存在を示している.\ \cite{MR}は,\ 多重バブル解として,\ 正多角形の頂点を中心とした縮むソリトンの和に漸近する解を構成した.\ この2つの結果はどちらもソリトン間の相互作用を単純にするため,\ 対称性を課していた.\ これは対称性を除いた場合は相互作用が複雑に作用するからである.\ そのため,\ 私の知る限りでは多重ソリトンの中心に関して,\ 正三角形の配置に強制される結果は本論文が初めてである.
%\begin{remark}
%By a `' bubble solution'', we mean a stationary solution (standing wave) obtained by applying the scaling symmetry to a fixed stationary profile. In this paper, we refer to such objects as `'solitons''. For instance, in the soliton resolution for the energy-critical wave equation, the solitons that arise are precisely bubble solutions. Moreover, by a `'multi-bubble solution'' we mean a solution that is asymptotic to a sum of bubbles. We emphasize that the term multi-bubble solution is not restricted to the case where all bubbles correspond to the same scaling, as in \cite{MR}. In many contexts, it also encompasses superpositions of bubble solutions associated with different scales. Indeed, \cite{J1,J2,JK,JL1,JL2} study two-bubble solutions whose scales are different.
%\end{remark}
\begin{remark}
By a ``bubble solution'', we mean a stationary solution (or standing wave) obtained from a fixed stationary profile by applying the scaling symmetry. In this paper, we refer to such solutions as ``solitons''. For instance, in the context of soliton resolution for the energy-critical wave equation, the solitons that arise are precisely bubble solutions. Moreover, by a ``multi-bubble solution'', we mean a solution that is asymptotic to a superposition of bubbles. We emphasize that the term ``multi-bubble solution'' is not restricted to the case where all bubbles have the same scale, as in \cite{MR}. In many contexts, it also includes superpositions of bubble solutions associated with different scales. Indeed, \cite{J1,J2,JK,JL1,JL2} study two-bubble solutions with different scales.
\end{remark}

\subsection{Notation}
Let $\{e_1,\ldots, e_d\}$ denote the canonical basis of $\mathbb{R}^d$, and let ${\partial}_k$ denote the partial derivative with respect to $x_k$. The Euclidean inner product is denoted for any pair of vectors $x,y\in \mathbb{R}^d$ by
\begin{align}\label{Euclideanproduct}
x\cdot y=\sum_{k=1}^dx_ky_k.
\end{align}
 In this paper, $X\lesssim Y$ means that $X\leq CY$ for some constant $C>0$. $X\sim Y$ means that $X\lesssim Y$ and $Y\lesssim X$. $X\ll Y$ means that there exists a sufficiently small constant $c>0$ such that $X\leq cY$. The meaning of sufficiency should be clear from the context, or clarified later in more explicit form. Moreover, in this paper we often deal with the second smallest value among three quantities. We therefore define
\begin{align}\label{middef}
\m{(a,b,c)}=a+b+c-\max{(a,b,c)}-\min{(a,b,c)}.
\end{align}

\section{Preliminaries}
In this section, we collect several preliminary results on the long-time dynamics of $K$-solitary waves solutions. Most of the material is taken from \cite[Section 2 and 3]{I2} and is included here for the reader's
convenience.

\subsection{Basic properties of the ground state}

Since $Q$ is radial, there exists $q:[0,\infty)\to\mathbb{R}$ such that
\begin{align*}
Q(x)=q(|x|).
\end{align*}
Furthermore, $q$ satisfies
\begin{align}\label{qode2}
q^{\prime \prime}+\frac{d-1}{r}q^{\prime}-q+q^p=0.
\end{align}
In addition, there exists a constant $c_q$ depending only on $d$ and $p$ such that
\begin{align}\label{qes}
|q(r)-c_qr^{-\frac{d-1}{2}}e^{-r}|+|q^{\prime}(r)+c_qr^{-\frac{d-1}{2}}e^{-r}|
\lesssim r^{-\frac{d+1}{2}}e^{-r}.
\end{align}
%We also introduce the constant
%\begin{align}\label{thetastar}
%\theta_{\star}=\min\{p-1,2\}.
%\end{align}
Next, in order to make the leading interaction term more explicit, we introduce the vector field
$H:\mathbb{R}^d\setminus\{0\}\to\mathbb{R}^d$ defined by
\begin{align}\label{Hdef}
H(z)=-\int_{\mathbb{R}^d}(\nabla Q^p(x))\,Q(x-z)\,dx.
\end{align}
Moreover, we define $g:(0,\infty)\to\mathbb{R}$ as
\begin{align*}
H(z)=\frac{z}{|z|}\,g(|z|).
\end{align*}
We note that this definition is well-defined. Furthermore, we have 
\begin{align*}
g(r+h)-g(r)
=-\int_{\mathbb{R}^d} {\partial}_1(Q^p)(x)\Bigl( Q\bigl(x-(r+h)e_1\bigr)-Q(x-re_1)\Bigr)\,dx.
\end{align*}
Therefore, we have 
\begin{align*}
g^{\prime}(r)
=\int_{\mathbb{R}^d} {\partial}_1(Q^p)(x)\,{\partial}_1Q(x-re_1)\,dx
=-\int_{\mathbb{R}^d}Q^p(x)\,{\partial}_1^2Q(x-re_1)\,dx.
\end{align*}
In addition, by direct computation,
\begin{align*}
{\partial}_1^2Q(x-re_1)
=\frac{\sum_{k=2}^dx_k^2}{|x-re_1|^3}q^{\prime}(|x-re_1|)
+\frac{(x_1-r)^2}{|x-re_1|^2}q^{\prime \prime}(|x-re_1|).
\end{align*}
Using \eqref{qode2}, we obtain
\begin{align}\label{gnozenkin1}
|g(r)-c_gr^{-\frac{d-1}{2}}e^{-r}|+|g^{\prime}(r)+c_gr^{-\frac{d-1}{2}}e^{-r}|
\lesssim r^{-\frac{d+1}{2}}e^{-r},
\end{align}
where
\begin{align*}
c_g=\int_{\mathbb{R}^d} Q^p(x)e^{-x_1}\,dx.
\end{align*}
Moreover, by \eqref{gnozenkin1}, we have for $R\gg 1$ and $|a| \ll R$
\begin{align}\label{gnozenkin2}
 \left|g(R+a)-e^{-a}g(R)\right|\lesssim \frac{g(R)}{R},
 \end{align}
 and we have for $|a|\ll 1$ and $1\ll R$
 \begin{align}\label{gnozenkin3}
\left|g(R+a)-(1-a)g(R)\right|\lesssim g(R)\left(a^2+\frac{|a|}{R}\right).
\end{align}
We will discuss further properties of $H$ and $g$ in a later subsection.

\subsection{Basic properties of the dynamics of $K$-solitary waves}

We now summarize several results concerning the long-time dynamics of $K$-solitary waves solutions.
Let $\vec{u}$ be a $K$-solitary waves. Then there exist $\sigma\in\{-1,1\}^K$ and a function $z:[0,\infty)\to(\mathbb{R}^d)^K$ such that \eqref{Ksoli} holds.
For such $z$ and $\sigma$, we define
\begin{align*}
Q_k=\sigma_k Q(\cdot-z_k),\qquad
\vec{Q}_k=
\begin{pmatrix}
Q_k\\
0
\end{pmatrix},
\qquad
Q_{\sum}=\sum_{k=1}^KQ_k,\qquad
\vec{Q}_{\sum}=
\begin{pmatrix}
Q_{\sum}\\
0
\end{pmatrix}.
\end{align*}
Furthermore, for a solution $\vec{u}$ of \eqref{DNKG} and $z$ and $\sigma$, we define
\begin{align*}
\vec{\varepsilon}=
\begin{pmatrix}
\varepsilon\\
\eta
\end{pmatrix}
=\vec{u}-\vec{Q}_{\sum}.
\end{align*}
We also introduce the minimal separation
\begin{align*}
D(t)=\min_{i\neq j}|z_i(t)-z_j(t)|.
\end{align*}

In a multi-soliton decomposition, the choice of the center parameters $z_k(t)$ is not unique due to translation invariance.
Following \cite[Lemma 3.2]{I2}, we fix this freedom by imposing orthogonality conditions that eliminate the contribution of the
neutral directions associated with translations of the linearized operator around each soliton. This leads to a more
convenient system for the modulation parameters.

\begin{lemma}\label{modKsoli}
Let $\vec{u}$ be a $K$-solitary waves. Then there exist $\sigma\in\{-1,1\}^K$ and a $C^1$ function
$z:[0,\infty)\to (\mathbb{R}^d)^K$ such that
\begin{align}
\lim_{t\to\infty}\Bigl(\left\| u(t)-\sum_{k=1}^K \sigma_kQ(\cdot-z_k(t))\right\|_{H^1}
+\|{\partial}_tu(t)\|_{L^2}\Bigr)&=0,\label{modlim1yowa}
\\
\lim_{t\to\infty} D(t)&=\infty, \label{modlim2yowa}
\end{align}
and for $1\leq l\leq d$, $1\leq i\leq K$, and $t\gg 1$,
\begin{align}\label{modeqyowa}
\int_{\mathbb{R}^d} \Bigl\{ {\partial}_tu(t)+2\alpha\Bigl(u(t)-\sum_{k=1}^K \sigma_k Q(\cdot-z_k(t))\Bigr)\Bigr\}
\, {\partial}_lQ(\cdot-z_i(t))=0.
\end{align}
\end{lemma}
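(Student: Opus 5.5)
The plan is a standard modulation argument via the implicit function theorem, applied near each soliton. Start from the definition of a $K$-solitary wave: we have $\sigma$ and some (possibly irregular) $\tilde z(t)$ with \eqref{Ksoli}. For $t\ge T_0$ large, look for a perturbation $z(t)=\tilde z(t)+y$ with $|y|\ll 1$ that solves the $dK$ equations
\begin{align*}
F_{i,l}(t,z):=\int_{\mathbb{R}^d}\Bigl\{\partial_tu(t)+2\alpha\Bigl(u(t)-\sum_{k=1}^K\sigma_kQ(\cdot-z_k)\Bigr)\Bigr\}\partial_lQ(\cdot-z_i)\,dx=0 .
\end{align*}
The quantity $\partial_t u+2\alpha \varepsilon$ is the natural one: $(\varepsilon,\eta)\mapsto \int (\eta+2\alpha\varepsilon)\partial_lQ_i$ pairs $\vec\varepsilon$ with the neutral direction of the damped linearized flow around $Q_i$ associated with translations.

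First compute the Jacobian of $F$ in $z$ at $z=\tilde z(t)$:
\begin{align*}
\partial_{z_{j,m}}F_{i,l}=2\alpha\sigma_j\int\partial_mQ(\cdot-z_j)\partial_lQ(\cdot-z_i)\,dx+\int\Bigl(\partial_tu+2\alpha\varepsilon\Bigr)\partial_{z_{j,m}}\partial_lQ(\cdot-z_i)\,dx .
\end{align*}
For $j=i$ the first term is $2\alpha\sigma_i\|\partial_1Q\|_{L^2}^2\delta_{lm}$, using radial symmetry of $Q$. For $j\neq i$ it is $O(e^{-D(t)/2})$, by the exponential decay \eqref{qes}. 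The second term is $O(\|\vec\varepsilon\|_{\mathcal H}+\|\partial_tu\|_{L^2})$, which tends to $0$. Hence the Jacobian is $2\alpha\|\partial_1Q\|^2\,\mathrm{diag}(\sigma)\otimes I_d+o(1)$, and it stays uniformly invertible in a fixed small ball around $\tilde z(t)$ for $t\ge T_0$. Also $|F(t,\tilde z(t))|\lesssim \|\vec\varepsilon(t)\|_{\mathcal H}\to0$. A quantitative implicit function theorem (a contraction on the ball $|y|\le C\|\vec\varepsilon(t)\|_{\mathcal H}$) then gives a unique zero $z(t)$ with $|z(t)-\tilde z(t)|\lesssim\|\vec\varepsilon(t)\|_{\mathcal H}\to0$. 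This immediately yields \eqref{modlim1yowa}, since $\|Q(\cdot-z)-Q(\cdot-\tilde z)\|_{H^1}\lesssim|z-\tilde z|$, and \eqref{modlim2yowa}, and so gives \eqref{modeqyowa}. Define $z$ arbitrarily (e.g.\ $C^1$-extended) on $[0,T_0]$, since the statement only requires \eqref{modeqyowa} for $t\gg1$.

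The main obstacle is the $C^1$ regularity in $t$. The zero $z(t)$ is locally unique, so continuity in $t$ follows from continuity of $t\mapsto\vec u(t)\in\mathcal H$. However, $\tilde z(t)$ need not be continuous, so I would work with $z(t)$ itself via local uniqueness and patch locally defined solutions. For differentiability, $F$ depends on $t$ through $\partial_tu$, which is only $C^0_tL^2$ with $\partial_t^2u\in C^0_tH^{-1}$, and through $u\in C^1_tL^2$. Since $\partial_lQ(\cdot-z_i)\in H^1$, the map $t\mapsto F(t,z)$ is $C^1$, jointly with $z$, using the equation $\partial_t^2u=\Delta u-2\alpha\partial_tu-u+f(u)$ tested against smooth decaying functions. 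The implicit function theorem in its $C^1$ form then gives $z\in C^1$, with $\dot z$ determined by differentiating \eqref{modeqyowa}. This is exactly the construction of \cite[Lemma 3.2]{I2}, which I would cite for the details.
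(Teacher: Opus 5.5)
Your proposal is correct and follows the paper's route: the paper simply defers to \cite[Lemma 3.2]{I2}, which is this same implicit-function-theorem modulation argument, and its nondegeneracy comes from the diagonal Jacobian term $2\alpha\sigma_i\|\partial_1Q\|_{L^2}^2\delta_{lm}$ that you computed. The one point to state carefully in the patching step is that the zero of $F(t,\cdot)$ is unique only up to permuting like-signed centers, so build the $C^1$ curve by continuation in $t$ from a single time (using uniform invertibility and closeness to $\tilde z(t)$ up to such a permutation), rather than by picking the zero near $\tilde z(t)$ separately at each $t$.
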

\begin{proof}
See \cite[Lemma 3.2]{I2}.
\end{proof}
%By Lemma \ref{modKsoli}, to simplify the subsequent discussion we assume that the parameters have been chosen so that
%\begin{align}\label{sa}
%\left\{
%\begin{aligned}
%&\lim_{t\to \infty}\|\vec{\varepsilon}(t)\|_{\mathcal{H}}=0,
%\\
%&\lim_{t\to \infty}D(t)=\infty,
%\\
%&\|\vec{\varepsilon}(t)\|_{\mathcal{H}}+\frac{1}{D(t)}<\delta\ \ \mbox{for}\ t\geq 0,
%\\
%&\int (\eta+2\alpha \varepsilon)\,{\partial}_lQ(\cdot-z_i)=0\ \mbox{for}\ t\geq 0,\ 1\leq l\leq d,\ 1\leq i\leq K,
%\end{aligned}
%\right.
%\end{align}
%for some $\delta>0$.
We then define the scalar function $\mathcal{F}:[1,\infty)\to [0,\infty)$ by
\begin{align}\label{Fdef}
\mathcal{F}(r)=\frac{g(r)}{2\alpha\|{\partial}_1Q\|_{L^2}^2}.
\end{align}
Then, by \eqref{gnozenkin2} and \eqref{gnozenkin3}, we have for $R\gg 1$ and $|a|\ll R$
\begin{align}\label{tukaeruF1}
\left|\mathcal{F}(R+a)-e^{-a}\mathcal{F}(R)\right|\lesssim \frac{\mathcal{F}(R)}{R},
\end{align}
and we have for $R\gg1$ and $|a|\ll 1$
\begin{align}\label{tukaeruF2}
\left|\mathcal{F}(R+a)-(1-a)\mathcal{F}(R)\right|\lesssim \mathcal{F}(R)\left(a^2+\frac{|a|}{R}\right).
\end{align}
Then, by \cite[Section 3]{I2}  $\mathcal{F}$ describes the leading-order interaction force between well-separated solitons.
\begin{lemma}\label{centerdynamics}
Let $\vec{u}$ be a $K$-solitary waves. Then there  exist $\sigma\in\{-1,1\}^K$ and a $C^1$ function
$z:[0,\infty)\to (\mathbb{R}^d)^K$ such that for $t\gg 1$
\begin{align}\label{zmodvol2}
\dot{z}_k
=-\sum_{\substack{1\leq i\leq K\\ i\neq k}}\sigma_i\sigma_k\mathcal{F}(|z_k-z_i|)
\frac{z_k-z_i}{|z_k-z_i|}
+o(\mathcal{F}(D)).
\end{align}
\end{lemma}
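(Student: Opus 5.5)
This is the modulation argument of \cite[Section 3]{I2}, specialized to the decomposition fixed in Lemma \ref{modKsoli}; I would recover \eqref{zmodvol2} in the following steps.

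\textbf{Step 1: equations for the remainder.} Take $\sigma$ and $z$ from Lemma \ref{modKsoli}. Write $\vec{\varepsilon}=(\varepsilon,\eta)=\vec{u}-\vec{Q}_{\sum}$. Subtracting the stationary equations $-\Delta Q_k+Q_k-f(Q_k)=0$ from \eqref{DNKG} gives the first-order system
\begin{align*}
\partial_t\varepsilon&=\eta+\sum_k\sigma_k\dot z_k\cdot\nabla Q(\cdot-z_k),\\
\partial_t\eta&=\Delta\varepsilon-\varepsilon-2\alpha\eta+f(Q_{\sum}+\varepsilon)-\sum_k f(Q_k).
\end{align*}
Set $G=f(Q_{\sum})-\sum_kf(Q_k)$. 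By \eqref{qes}, its pairing against $\partial_lQ(\cdot-z_i)$ is dominated by the terms $pQ_i^{p-1}Q_j$ and $f(Q_j)$, for $j\neq i$.

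\textbf{Step 2: differentiate the orthogonality condition.} Differentiate \eqref{modeqyowa} in time. Use $L_i=-\Delta+1-f'(Q_i)$ and the identity $L_i\partial_lQ_i=0$. The linear terms in $\varepsilon$ cancel up to $O(\|\vec\varepsilon\|_{\mathcal H}|\dot z|)$ and cross terms $f'(Q_j)\varepsilon$ with $j\ne i$. The $2\alpha\eta$ terms cancel against $2\alpha\partial_t\varepsilon$. What remains is
\[
2\alpha\sigma_i\|\partial_1Q\|_{L^2}^2\,\dot z_{i,l}=-\int G\,\partial_lQ(\cdot-z_i)+O\big(\|\vec\varepsilon\|_{\mathcal H}^2+\|\vec\varepsilon\|_{\mathcal H}|\dot z|+e^{-(1+\kappa)D}\big).
\]
The leading interaction integral is
\[
-\int pQ_i^{p-1}\partial_lQ_i\,Q_j=-\sigma_j\int\partial_l(Q^p)(x)Q(x-(z_j-z_i))\,dx=\sigma_jH_l(z_j-z_i).
\]
With $H(z)=\frac{z}{|z|}g(|z|)$ and \eqref{Fdef}, this yields exactly the sign structure $-\sigma_i\sigma_k\mathcal F(|z_k-z_i|)\frac{z_k-z_i}{|z_k-z_i|}$ of \eqref{zmodvol2}. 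The remaining parts of $G$ are either lower order in the exponential weights (using $p>2$), or terms $\sum_{j\ne k}Q_jQ_k$ localized away from $z_i$.

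\textbf{Step 3 (main obstacle): bound the remainder by $o(\mathcal F(D))$.} The energy $\|\vec\varepsilon\|_{\mathcal H}$ is a priori only $o(1)$, which is not enough. I would follow \cite{I2} and \cite{CMYZ}. First, split $\vec\varepsilon$ along the unstable and stable eigendirections $Y_\pm$ of each linearized soliton. Second, use a coercive Lyapunov-type energy functional (energy plus $\alpha\int\varepsilon\eta$, localized around each soliton) to obtain
\[
\frac{d}{dt}\mathcal E\le-c\,\mathcal E+C\mathcal F(D)^2+C\sum|a_k^+|^2.
\]
Third, use the fact that a solution converging to the soliton sum must have its unstable components $a_k^+$ controlled by the interaction, since otherwise exponential growth contradicts \eqref{modlim1yowa}. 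Together these give $\|\vec\varepsilon\|_{\mathcal H}^2\lesssim\sup_{s\ge t}\mathcal F(D(s))^{2-}$. Combined with a preliminary bound $|\dot z|\lesssim\|\vec\varepsilon\|_{\mathcal H}+\mathcal F(D)$, every error in Step 2 is $o(\mathcal F(D))$. Finally, divide by $2\alpha\sigma_i\|\partial_1Q\|_{L^2}^2$ to obtain \eqref{zmodvol2}.

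The delicate point is the backward-in-time supremum over $s\ge t$. To handle it, I would use the slow variation of $D$: from \eqref{zmodvol2} itself, $|\dot D|\lesssim\mathcal F(D)\ll1$, so $\mathcal F(D(s))$ is comparable to $\mathcal F(D(t))$ on unit time intervals. A bootstrap argument then closes the estimate.
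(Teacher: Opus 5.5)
Your outline follows the same route as the paper. The paper's proof just assembles two results of \cite{I2}. Your Steps 1--2 are the modulation estimate \eqref{zes} of \cite[Lemma 3.3]{I2}, with error $e^{-\theta D}+\|\vec{\varepsilon}\|_{\mathcal{H}}^2$ for $1<\theta<\min(p-1,2)$, combined with $e^{-\theta D}\lesssim \mathcal{F}(D)/D$. Your Step 3 is what \cite[Lemma 3.8]{I2} supplies, namely $\|\vec{\varepsilon}\|_{\mathcal{H}}^2=o(\mathcal{F}(D))$.

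One bookkeeping point in Step 1 is wrong, although your final formula is right. The term $f(Q_j)$ is not a leading contribution to $\int G\,\partial_lQ(\cdot-z_i)$. On its own it contributes exactly $-\sigma_jH_l(z_j-z_i)$ to your Step~2 identity, which would cancel the main term entirely. In fact, near $z_j$ one has $f(Q_{\sum})-f(Q_i)\approx f(Q_j)+f'(Q_j)Q_i$, so $f(Q_j)$ cancels inside $G$. Only $f'(Q_i)Q_j$ near $z_i$ survives at order $\mathcal{F}(D)$.

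The step that fails as written is the conclusion of Step 3. The bound $\|\vec{\varepsilon}(t)\|_{\mathcal{H}}^2\lesssim\sup_{s\ge t}\mathcal{F}(D(s))^{2-}$ cannot be converted into $o(\mathcal{F}(D(t)))$ by comparability of $\mathcal{F}(D)$ on unit time intervals. Nothing used so far prevents $D$ from later dropping by an amount comparable to $D(t)$, and then that unweighted supremum is not $o(\mathcal{F}(D(t)))$. Also, the forward Lyapunov inequality involves past times $s\in[t_0,t]$ and an initial term $e^{-c(t-t_0)}\mathcal{E}(t_0)$, so a future-only supremum is not what it produces.

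You must keep the weights your argument actually generates. These are $e^{-\nu^+(s-t)}$ from integrating the unstable modes backward, and $e^{-c(t-s)}$ from integrating the Lyapunov inequality forward. Then use $|\dot D|\lesssim \mathcal{F}(D)+\|\vec{\varepsilon}\|_{\mathcal{H}}^2=o(1)$. This follows from Step~2 alone, so there is no need to invoke \eqref{zmodvol2} and no circularity. It gives $\mathcal{F}(D(s))\le e^{\delta|s-t|}\mathcal{F}(D(t))$ with $\delta$ small compared to $\nu^+$ and $c$, which absorbs both weighted integrals. The initial term is $o(\mathcal{F}(D(t)))$ because $D(t)=o(t)$.
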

\begin{proof}
By Lemma \ref{modKsoli}, there  exist $\sigma\in\{-1,1\}^K$ and a $C^1$ function
$z:[0,\infty)\to (\mathbb{R}^d)^K$ such that $\sigma$ and $z$ satisfy \eqref{modlim1yowa}, \eqref{modlim2yowa}, and \eqref{modeqyowa}.
Furthermore, by \cite[Lemma 3.3]{I2}, for $1<\theta<\min{(p-1,2)}$, we have
\begin{align}
\left|\dot{z}_k+\sum_{\substack{1\leq i\leq K\\ i\neq k}}\sigma_i\sigma_k\mathcal{F}(|z_k-z_i|)
\frac{z_k-z_i}{|z_k-z_i|}\right|
&\lesssim e^{-\theta D}+\|\vec{\varepsilon}\|_{\mathcal{H}}^2.\label{zes}
\end{align}
In addition, by \cite[Lemma 3.8]{I2} and
\begin{align*}
e^{-\theta D}\lesssim  \frac{\mathcal{F}(D)}{D},
\end{align*}
we obtain \eqref{zmodvol2}.
\end{proof}
Moreover, by exploiting the energy dissipation identity \eqref{energydecay} together with a Taylor expansion of the
energy around $\vec{Q}_{\sum}$, one obtains quantitative estimates both on the interaction terms and on the decay of the
remainder $\vec{\varepsilon}$. We define $V$ as 
\begin{align}\label{Vdef}
V=-\sum_{1\leq i<j\leq K}\sigma_i\sigma_j\mathcal{F}(|z_i-z_j|).
\end{align}
We next record a useful estimate of $V$.
\begin{lemma}\label{limeplem}
Let $\vec{u}$ be a $K$-solitary waves. Then there  exist $\sigma\in\{-1,1\}^K$ and a $C^1$ function
$z:[0,\infty)\to (\mathbb{R}^d)^K$ such that
\begin{align}\label{Ves1}
\liminf_{t\to\infty} \frac{V(t)}{\mathcal{F}(D(t))}\geq 0.
\end{align}
Furthermore, we have for $t\gg 1$
\begin{align}
D(t)-\log{t}+\frac{d-1}{2}\log{(\log{t})}\lesssim1, \label{Dupper}
\\
\mathcal{F}(D)\gtrsim \frac{1}{t}. \label{F(D)lower}
\end{align}
\end{lemma}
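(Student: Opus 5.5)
The plan is to combine the modulation equations of Lemma \ref{modKsoli}, which remove the neutral translation directions, with a Taylor expansion of the energy around $\vec{Q}_{\sum}$ and the dissipation identity \eqref{energydecay}. Together these yield a Lyapunov-type functional controlling both $V$ and $\vec{\varepsilon}$, following \cite[Section 3]{I2}.

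\textbf{Step 1: expand the energy.} Using the orthogonality \eqref{modeqyowa} and the coercivity of the linearized operator around each soliton on the subspace orthogonal to the kernel and the negative direction, I will expand the energy. The expected form is
\begin{align*}
E(\vec{u}(t))=KE(Q,0)+\kappa V(t)+\mathcal{N}(\vec{\varepsilon})+O\bigl(e^{-\theta D}+\|\vec{\varepsilon}\|_{\mathcal{H}}^3\bigr),
\end{align*}
with $\kappa>0$ and $\mathcal{N}(\vec{\varepsilon})$ a quadratic form. The interaction term comes from $-\int Q_iQ_j^p$, which matches $g$ and hence $\mathcal{F}$ up to a positive constant.

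\textbf{Step 2: handle the unstable directions.} The negative direction $Y$ of each linearized operator is the delicate part. Following \cite[Lemma 3.8]{I2}, I will show the unstable components $a_k^+=\langle \vec{\varepsilon},\vec{Y}_k^+\rangle$ satisfy $\frac{d}{dt}a_k^+=\nu a_k^+ +O(\|\vec{\varepsilon}\|^2+e^{-D})$ with $\nu>0$. Since $\vec{\varepsilon}\to 0$, integrating from $\infty$ forces $|a_k^+|\lesssim \sup_{s\ge t}(\|\vec{\varepsilon}(s)\|^2+\mathcal{F}(D(s)))$. The stable components decay as well. This gives $\mathcal{N}(\vec{\varepsilon})\gtrsim \|\vec{\varepsilon}\|^2$ up to these small errors.

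\textbf{Step 3: prove \eqref{Ves1}.} The energy is nonincreasing and converges to $KE(Q,0)$, so $E(\vec{u}(t))\ge KE(Q,0)$ for all $t$. Combining this with Step 1 gives $\kappa V+\mathcal{N}(\vec{\varepsilon})+o(\mathcal{F}(D))\ge 0$. To conclude, I also need the smallness $\|\vec{\varepsilon}\|^2=o(\mathcal{F}(D))$ in an averaged sense. For this I will use the dissipation $\int_t^\infty\|\partial_tu\|^2=(E(t)-E(\infty))/(2\alpha)$ together with \eqref{zes}. Arguing by contradiction, if $V\le -c\mathcal{F}(D)$ along a sequence, the energy would lie strictly below its limit, contradicting monotonicity.

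\textbf{Step 4: upper bound on $D$ and \eqref{F(D)lower}.} The relevant quantity is the time derivative of the separations. By Lemma \ref{centerdynamics}, $|\dot z_k|\lesssim \mathcal{F}(D)$, so $\frac{d}{dt}D\lesssim \mathcal{F}(D)$ almost everywhere. Since $\mathcal{F}(D)\sim D^{-(d-1)/2}e^{-D}$, the function $\Phi(D)=\int^{D}1/\mathcal{F}$ satisfies $\frac{d}{dt}\Phi(D)\lesssim 1$, hence $\Phi(D(t))\lesssim t$. Asymptotically $\Phi(D)\sim D^{(d-1)/2}e^{D}$, so $D^{(d-1)/2}e^{D}\lesssim t$. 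Taking logarithms gives $D-\log t+\frac{d-1}{2}\log\log t\lesssim 1$, which is \eqref{Dupper}. The same bound reads $\mathcal{F}(D)\sim (D^{(d-1)/2}e^{D})^{-1}\gtrsim 1/t$, which is \eqref{F(D)lower}.

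The main obstacle is Step 3. Controlling $\|\vec{\varepsilon}\|^2$ relative to $\mathcal{F}(D)$ pointwise rather than on average requires the delicate bootstrap of \cite[Lemma 3.8]{I2}; I would invoke that lemma directly.
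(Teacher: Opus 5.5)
Your proposal is correct, and for \eqref{Ves1} it ends where the paper does. The paper's entire proof is the citation \cite[Lemma 3.8, Lemma 3.9]{I2}, and you likewise invoke \cite[Lemma 3.8]{I2}.

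Your Steps 1--3 are motivation rather than proof. The contradiction ``$V(t_n)\le -c\,\mathcal{F}(D(t_n))$ forces $E(\vec u(t_n))<KE(Q,0)$'' works only once $\|\vec{\varepsilon}(t_n)\|_{\mathcal{H}}^2=o(\mathcal{F}(D(t_n)))$ is known pointwise. An averaged bound from the dissipation would not give this. That pointwise bound is exactly the content of the cited lemma.

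Where you go beyond the paper is \eqref{Dupper}--\eqref{F(D)lower}. Instead of citing \cite[Lemma 3.9]{I2}, you derive them by a standard comparison argument:
\begin{itemize}
\item $|\dot z_k|\lesssim\mathcal{F}(D)$, from Lemma \ref{centerdynamics} together with $\mathcal{F}(r)\lesssim\mathcal{F}(D)$ for $r\ge D$;
\item hence $|\dot D|\lesssim\mathcal{F}(D)$ almost everywhere for the Lipschitz function $D$;
\item then integrate $1/\mathcal{F}$ along $D(t)$.
\end{itemize}
This is correct. It is not independent of \cite{I2}, since Lemma \ref{centerdynamics} itself relies on \cite[Lemma 3.8]{I2} to absorb $\|\vec{\varepsilon}\|_{\mathcal{H}}^2$. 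But it shows that the upper bound on $D$ uses only Lipschitz control of the centers and no sign information on $V$.

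You skipped one small step. From $D^{\frac{d-1}{2}}e^{D}\lesssim t$ you get $D+\frac{d-1}{2}\log D\le \log t+C$. Replacing $\log D$ by $\log(\log t)$ needs a case split:
\begin{itemize}
\item if $D\ge\frac12\log t$, then $\log D\ge\log(\log t)-\log 2$, which gives \eqref{Dupper};
\item if $D<\frac12\log t$, the left side of \eqref{Dupper} is negative for $t\gg1$.
\end{itemize}
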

\begin{proof}
\eqref{Ves1} represents \cite[Lemma 3.8]{I2}. \eqref{Dupper} and \eqref{F(D)lower} represent \cite[Lemma 3.9]{I2}. Thus, see \cite[Lemma 3.8, Lemma 3.9]{I2}.
\end{proof}
If, in addition, the interaction functional $V$ dominates $\mathcal{F}(D)$ in the sense that
\begin{align}\label{Vrep}
\liminf_{t\to \infty}\frac{V(t)}{\mathcal{F}(D(t))}> 0,
\end{align}
then one can further improve the control on the remainder and the modulation parameters by \cite[Lemma 3.11]{I2}.
\begin{lemma}\label{epzes}
Let $\vec{u}$ be a $K$-solitary waves and satisfy \eqref{modlim1yowa}, \eqref{modlim2yowa}, and \eqref{modeqyowa} for some $\sigma$ and a $C^1$ function $z$. Furthermore, we assume \eqref{Vrep}. Then there exists $C>0$ depending on $\vec{u}$ such that
\begin{align}\label{newepes}
\|\vec{\varepsilon}(t)\|_{\mathcal{H}}\leq C \mathcal{F}(D(t)).
\end{align}
Furthermore, for any $1<\theta<\min{(p-1,2)}$, there exists $\tilde{C}>0$ such that for any $1\leq k\leq K$,
\begin{align}\label{newzes}
\left|\dot{z}_k+\sum_{\substack{1\leq i\leq K\\ i\neq k}}\sigma_i\sigma_k\mathcal{F}(|z_k-z_i|)
\frac{z_k-z_i}{|z_k-z_i|}\right|
\leq \tilde{C} e^{-\theta D}.
\end{align}
\end{lemma}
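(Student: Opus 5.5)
The second assertion \eqref{newzes} follows quickly from the first, so the plan is to prove \eqref{newepes} and then deduce \eqref{newzes}. Indeed, \eqref{zes} already bounds the left-hand side of \eqref{newzes} by $e^{-\theta D}+\|\vec{\varepsilon}\|_{\mathcal{H}}^2$. If \eqref{newepes} holds, then \eqref{gnozenkin1} gives $\|\vec{\varepsilon}\|_{\mathcal{H}}^2\lesssim \mathcal{F}(D)^2\lesssim D^{-(d-1)}e^{-2D}\lesssim e^{-\theta D}$, because $\theta<2$. Everything therefore reduces to the bound $\|\vec{\varepsilon}\|_{\mathcal{H}}\lesssim \mathcal{F}(D)$.

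For \eqref{newepes} I will use an energy/Lyapunov argument around $\vec{Q}_{\sum}$.

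\emph{Step 1: expand the energy.} Taylor expand $E(\vec u)$ at $\vec{Q}_{\sum}$. Using \eqref{Qode}, the self-interaction terms give $KE(Q)$. The cross terms between $Q_i$ and $Q_j$ give, up to a positive normalization constant $\kappa$, exactly $-\kappa V$, modulo $O(e^{-\theta D})$. The linear term in $\varepsilon$ is $-\int (f(Q_{\sum})-\sum_k f(Q_k))\varepsilon$, which is $O(\mathcal{F}(D)\|\varepsilon\|_{H^1})$. The quadratic part is $\frac12\langle L\varepsilon,\varepsilon\rangle+\frac12\|\eta\|_{L^2}^2$, where $L=-\Delta+1-\sum_k f'(Q_k)$. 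The remainder is $O(\|\vec{\varepsilon}\|_{\mathcal{H}}^{\min(p,3)})$.

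\emph{Step 2: build a damped functional.} Next I introduce the standard damped functional
\begin{align*}
\mathcal{W}=\tfrac12\langle L\varepsilon,\varepsilon\rangle+\tfrac12\|\eta\|_{L^2}^2+\alpha\langle\varepsilon,\eta\rangle .
\end{align*}
Its coercivity is controlled only modulo the neutral and unstable directions of each $L_k=-\Delta+1-f'(Q_k)$. The neutral directions $\partial_lQ_k$ are eliminated by \eqref{modeqyowa}. For the unstable eigenfunction $Y$ of $L_k$ I introduce the coordinates $a_k^{\pm}=\int(\eta+\nu^{\pm}\varepsilon)Y(\cdot-z_k)$, with $\nu^{\pm}$ the two roots associated with the eigenvalue. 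These satisfy $\dot a_k^{\pm}=\nu^{\pm}a_k^{\pm}+O(\|\vec{\varepsilon}\|^2+\mathcal{F}(D))$, with $\nu^+>0>\nu^-$.

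Differentiating $\mathcal{W}$ along \eqref{DNKG} and using \eqref{zes} to control the $\dot z$ terms, I expect
\begin{align*}
\frac{d}{dt}\mathcal{W}\le -c\,\mathcal{W}+C\big(\mathcal{F}(D)\|\vec{\varepsilon}\|_{\mathcal{H}}+\textstyle\sum_k|a_k^+|^2\big),\qquad \mathcal{W}+C\sum_k|a_k^{\pm}|^2\sim\|\vec{\varepsilon}\|_{\mathcal{H}}^2 .
\end{align*}
For the unstable mode, integrating the $a_k^+$ equation backward from $+\infty$ is legitimate because $a_k^+\to0$. This gives $|a_k^+(t)|\lesssim\sup_{s\ge t}\bigl(\mathcal{F}(D(s))+\|\vec{\varepsilon}(s)\|^2\bigr)$.

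\emph{Step 3: close the estimate.} \eqref{zes} gives $|\dot z_k|\lesssim \mathcal{F}(D)+\|\vec\varepsilon\|^2$, which is small, so $D$ varies by $O(1)$ on unit time intervals. By \eqref{tukaeruF1}, $\mathcal{F}(D(s))\sim\mathcal{F}(D(t))$ for $|s-t|\lesssim1$. A Gronwall argument on $\mathcal{W}$ then yields $\|\vec\varepsilon(t)\|\lesssim \sup_{s\ge t'}\mathcal{F}(D(s))+e^{-c(t-t')}$. The exponential term is absorbed using \eqref{F(D)lower}.

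The main obstacle is that this only controls $\varepsilon$ by $\sup_{s\ge t}\mathcal{F}(D(s))$, not by $\mathcal{F}(D(t))$. If $D$ could later decrease, $\mathcal{F}(D)$ would become larger in the future. This is where \eqref{Vrep} enters. I will compute $\frac{d}{dt}V=\nabla_zV\cdot\dot z$ and insert \eqref{zes}: the main term is $-\sum_k|\nabla_{z_k}V|^2\le0$ up to errors $O(\mathcal{F}(D)(e^{-\theta D}+\|\vec\varepsilon\|^2))$. Hence $V$ is essentially nonincreasing. Together with $V\gtrsim\mathcal{F}(D)$ from \eqref{Vrep} and the trivial bound $V\lesssim\mathcal{F}(D)$, this gives $\mathcal{F}(D(s))\lesssim V(s)\lesssim V(t)+\text{error}\lesssim\mathcal{F}(D(t))$ for $s\ge t$. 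That replaces the supremum by $\mathcal{F}(D(t))$. The error terms are then handled by a bootstrap: assume $\|\vec\varepsilon\|\le 2C\mathcal{F}(D)$ on $[t,\infty)$ and improve the constant.

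As a consistency check, the energy identity \eqref{energydecay} gives $E-KE(Q)=2\alpha\int_t^\infty\|\partial_tu\|^2\ge0$. Combined with Step 1 this reads $\kappa V\lesssim \|\vec\varepsilon\|^2+\mathcal{F}(D)\|\vec\varepsilon\|$, which is compatible with, and in fact sharp for, \eqref{newepes}.
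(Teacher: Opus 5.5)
The paper does not prove this lemma. It imports it from \cite[Lemma 3.11]{I2}, within the energy framework described before Lemma \ref{limeplem}: the identity \eqref{energydecay} combined with a Taylor expansion of $E$ at $\vec Q_{\sum}$. Your reduction of \eqref{newzes} to \eqref{newepes} via \eqref{zes} is correct. The part of your argument that uses \eqref{Vrep} does not work as written.

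\emph{Sign in Step 1.} One has $E(\vec Q_{\sum})-KE(Q)=-\sum_{i<j}\sigma_i\sigma_j G(|z_i-z_j|)+O(e^{-\theta D})$, where $G(r)=\int Q^p(x)Q(x-re_1)\,dx=\int_r^\infty g$. Hence $E(\vec Q_{\sum})-KE(Q)=+\kappa V+O(\mathcal F(D)/D)$ with $\kappa=2\alpha\|\partial_1Q\|_{L^2}^2$. So the sign is $+$, and the error is $O(\mathcal F(D)/D)$, not $O(e^{-\theta D})$.

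Your ``consistency check'' is actually a warning sign. With your sign, \eqref{Vrep} and $E(\vec u(t))\ge KE(Q)$ would give $\mathcal F(D)\lesssim\|\vec\varepsilon\|_{\mathcal H}^2+\mathcal F(D)\|\vec\varepsilon\|_{\mathcal H}$, i.e. $\|\vec\varepsilon\|_{\mathcal H}\gtrsim\mathcal F(D)^{1/2}$. That contradicts \eqref{newepes}.

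\emph{The monotonicity of $V$ in Step 3 (the load-bearing gap).} It fails with the error you claim. $\nabla_{z_k}V$ is built from $\mathcal F'$, while \eqref{zes} is built from $\mathcal F$. By \eqref{gnozenkin1}, $\mathcal F+\mathcal F'$ is only $O(\mathcal F(r)/r)$, much larger than $e^{-\theta r}$. Thus
\begin{align*}
\dot z_k=-\nabla_{z_k}V+O(\mathcal F(D)/D)+O(e^{-\theta D}+\|\vec\varepsilon\|_{\mathcal H}^2),
\end{align*}
and you only get $\dot V\le C\mathcal F(D)^2D^{-2}+(\text{your error terms})$.
\begin{itemize}
\item To conclude $V(s)\lesssim V(t)$ for all $s\ge t$, you would need $\int_t^\infty\mathcal F(D)^2D^{-2}\,d\tau\lesssim\mathcal F(D(t))$. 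That requires an upper decay rate for $\mathcal F(D)$, but at this stage only the lower bound \eqref{F(D)lower} is known.
\item Replacing $\mathcal F$ in $V$ by its primitive leaves $\int_t^\infty(e^{-\theta D}+\|\vec\varepsilon\|_{\mathcal H}^2)^2\,d\tau$, which has the same problem.
\item Monotonicity only controls the future. Your unweighted bound $\sup_{s\ge t'}\mathcal F(D(s))+e^{-c(t-t')}$ needs $t-t'\gtrsim\log t$ to absorb the exponential through \eqref{F(D)lower}. On the past window $[t',t]$, the values $\mathcal F(D(s))$ may exceed $\mathcal F(D(t))$.
\end{itemize}

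\emph{Two repairs.}
\begin{enumerate}
\item With the correct sign, \eqref{energydecay} gives an exactly monotone quantity. For $s\ge t$,
\begin{align*}
\kappa V(s)-o(\mathcal F(D(s)))\le E(\vec u(s))-KE(Q)\le E(\vec u(t))-KE(Q)\le \kappa V(t)+o(\mathcal F(D(t))),
\end{align*}
using $\|\vec\varepsilon\|_{\mathcal H}^2=o(\mathcal F(D))$ (as in the proof of Lemma \ref{centerdynamics}) or your bootstrap hypothesis. Then \eqref{Vrep} yields $\mathcal F(D(s))\lesssim\mathcal F(D(t))$ for $s\ge t$. This is the energy mechanism the paper alludes to.
\item Within your own scheme, no monotonicity is needed. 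The past enters only through weights $e^{-c(t-s)}$ (the coercive functional and $a_k^-$), and the future only through $e^{-\nu^+(s-t)}$ (for $a_k^+$). Since $|\dot D|\lesssim\mathcal F(D)+\|\vec\varepsilon\|_{\mathcal H}^2\to0$, one has $\mathcal F(D(s))\lesssim e^{\delta|s-t|}\mathcal F(D(t))$ with $\delta\to0$. Estimating the weighted quantity $\sup_{s\ge T}e^{-\kappa'|t-s|}\|\vec\varepsilon(s)\|_{\mathcal H}$, with $\delta<\kappa'$ small, then closes directly.
\end{enumerate}

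\emph{Two technical points.}
\begin{itemize}
\item Your $\mathcal W$ with cross coefficient $\alpha$ is not coercive for $\alpha>1$. Take $\eta=-\alpha\varepsilon$ with $\varepsilon$ spread out at low frequency away from the solitons; then $\mathcal W\approx\frac12(1-\alpha^2)\|\varepsilon\|_{L^2}^2<0$. Use a small cross coefficient $\mu$ instead.
\item The coordinates diagonalizing the unstable and stable modes are $a_k^{\pm}=\int(\eta-\nu^{\mp}\varepsilon)Y(\cdot-z_k)$, not $\int(\eta+\nu^{\pm}\varepsilon)Y(\cdot-z_k)$.
\end{itemize}
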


\section{Soliton interactions for $(1,3)$-sign solitary waves solutions}
In this section, we consider $(1,3)$-sign solitary waves solutions. First, to simplify the notation, we rewrite Lemma \ref{modKsoli} in the following form.
\begin{lemma}\label{Ksolikakikae}
Let $\vec{u}$ be a $(1,3)$-sign solitary waves. Then there exist $C^1$ functions $z_0,z_1,z_2,z_3:[0,\infty)\to \mathbb{R}^d$ such that 
\begin{align}
\lim_{t\to\infty}(\left\| u(t)-Q(\cdot-z_0(t))+\sum_{k=1}^3 Q(\cdot-z_k(t))\right\|_{H^1}+\|{\partial}_tu(t)\|_{L^2})&=0,\label{modlim1}
\\
\lim_{t\to\infty} \min_{0\leq i\neq j\leq 3}|z_i(t)-z_j(t)|&=\infty. \label{modlim2}
\end{align}
Furthermore, we have for $1\leq l\leq d$, $0\leq i\leq 3$, and $t\gg 1$,
\begin{align}\label{modeq}
\int_{\mathbb{R}^d} \{ {\partial}_tu(t)+2\alpha(u(t)-Q(\cdot-z_0(t))+\sum_{k=1}^3 Q(\cdot-z_k(t)))\} {\partial}_lQ(\cdot-z_i(t))=0.
\end{align}
\end{lemma}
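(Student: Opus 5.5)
This lemma is a relabeling of Lemma \ref{modKsoli} in the special case of a $(1,3)$-sign solitary waves, so the plan is to reduce to that lemma and then identify the signs. Let $\vec{u}$ be a $(1,3)$-sign solitary waves. Definition \ref{defm,nsoli} gives $z_+:[0,\infty)\to\mathbb{R}^d$ and $z_-:[0,\infty)\to(\mathbb{R}^d)^3$ satisfying \eqref{mnsoli}. Setting $K=4$, $\sigma=(1,-1,-1,-1)$ and $z=(z_{+,1},z_{-,1},z_{-,2},z_{-,3})$ shows that $\vec{u}$ is a $4$-solitary waves in the sense of Definition \ref{defKsoli}. Lemma \ref{modKsoli} then yields some $\tilde\sigma\in\{-1,1\}^4$ and a $C^1$ function $\tilde z=(\tilde z_1,\dots,\tilde z_4)$ satisfying \eqref{modlim1yowa}, \eqref{modlim2yowa} and \eqref{modeqyowa}.

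The only real step is showing that exactly one component of $\tilde\sigma$ equals $+1$, so that the output of Lemma \ref{modKsoli} has the prescribed sign pattern. I would compare the two decompositions at large times $t$, subtracting one from the other:
\[
\Bigl\|\sum_{k=1}^4\tilde\sigma_kQ(\cdot-\tilde z_k(t))-\sum_{k=1}^4\sigma_kQ(\cdot-z_k(t))\Bigr\|_{H^1}\to0 .
\]
Fix a $k$ with $\sigma_k=1$ (there is only $k=1$), and pair $Q(\cdot-z_k(t))$ against the above difference in $L^2$. Because all centers within each decomposition separate, the cross terms $\int Q(\cdot-a)Q(\cdot-b)$ with $|a-b|\to\infty$ tend to $0$. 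Hence there must exist $j$ with $|\tilde z_j(t)-z_k(t)|$ bounded along the sequence and $\tilde\sigma_j=\sigma_k$. If instead $\tilde\sigma_j=-\sigma_k$, the pairing would give roughly $-\|Q\|_{L^2}^2-\int Q\,Q(\cdot-(\tilde z_j-z_k))<0$, not $\to0$.

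This gives an injective matching between the two families of solitons. It is injective because distinct $\tilde z_j$ separate, so two of them cannot both stay within bounded distance of the same $z_k$. It preserves signs, and since both sides have four solitons, it is a bijection. Running the argument at a single time and then invoking continuity would make the matching consistent, but even this is unnecessary: only the multiset of signs is needed, and a bijection of sign-preserving pairs at any one large time already shows that $\tilde\sigma$ contains exactly one $+1$ and three $-1$.

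Finally, I would permute the indices so that $\tilde\sigma_1=1$, and rename $z_0=\tilde z_1$ and $z_k=\tilde z_{k+1}$ for $k=1,2,3$. Then \eqref{modlim1yowa}, \eqref{modlim2yowa} and \eqref{modeqyowa} become exactly \eqref{modlim1}, \eqref{modlim2} and \eqref{modeq}, and the $C^1$ regularity is inherited from Lemma \ref{modKsoli}.

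The main, mild, obstacle is the sign-identification step. An alternative is to inspect the proof of \cite[Lemma 3.2]{I2}: the modulated centers are obtained by perturbing the given centers via the implicit function theorem, so $\sigma$ is unchanged. That makes the lemma immediate.
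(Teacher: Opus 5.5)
Your proposal is correct and follows the paper, which gives no separate proof. The paper simply restates Lemma \ref{modKsoli} (i.e.\ \cite[Lemma 3.2]{I2}) for $K=4$, $\sigma=(1,-1,-1,-1)$ and relabels the centers, relying, as in your closing remark, on the fact that the modulation only readjusts the centers and keeps the given signs. Your extra sign-matching argument is a valid safeguard against the loose ``there exist $\sigma$'' phrasing of Lemma \ref{modKsoli}, with one small correction: injectivity of $k\mapsto j(k)$ comes from the separation of the $z_k$, whereas the separation of the $\tilde z_j$ only gives uniqueness of $j(k)$.
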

\begin{remark}
In particular, by Lemma \ref{Ksolikakikae}, throughout the rest of this paper, whenever we assume that $\vec{u}$ is a $(1,3)$-sign solitary waves, we choose $C^1$ functions $z_0,z_1,z_2,z_3$ so that \eqref{modlim1}, \eqref{modlim2}, and \eqref{modeq} hold.
\end{remark}
Here we introduce several definitions to further simplify the notation. First, in what follows we proceed using the functions $z_k$ above. Furthermore, we define for $k=1,2,3$, 
\begin{align}\label{Zrhoudef1}
Z_k=z_k-z_0,\ \rho_k=|Z_k|,\ u_k=\frac{Z_k}{\rho_k}.
\end{align}
In addition, we define for $1\leq j,\ k\leq 3$ and $j\neq k$,
\begin{align}\label{Zrhoudef2}
Z_{jk}=z_k-z_j,\ \rho_{jk}=|Z_{jk}|,\ u_{jk}=\frac{Z_{jk}}{\rho_{jk}}.
\end{align}
In this setting, Lemma \ref{centerdynamics} yields the following dynamics for $z_0,z_1,z_2,z_3$.
\begin{lemma}\label{(1,3)centerdynamics}
Let $\vec{u}$ be a (1,3)-sign solitary waves. Then $z_0,z_1,z_2,z_3$ satisfy 
\begin{align}\label{(1,3)zdynamics}
\begin{aligned}
\dot{z}_0&=-\mathcal{F}(\rho_1)u_1-\mathcal{F}(\rho_2)u_2-\mathcal{F}(\rho_3)u_3+o\left(\mathcal{F}(D)\right),
\\
\dot{z}_1&=\mathcal{F}(\rho_1)u_1+\mathcal{F}(\rho_{12})u_{12}+\mathcal{F}(\rho_{13})u_{13}+o\left(\mathcal{F}(D)\right),
\\
\dot{z}_2&=\mathcal{F}(\rho_2)u_2+\mathcal{F}(\rho_{21})u_{21}+\mathcal{F}(\rho_{23})u_{23}+o\left(\mathcal{F}(D)\right),
\\
\dot{z}_3&=\mathcal{F}(\rho_3)u_3+\mathcal{F}(\rho_{31})u_{31}+\mathcal{F}(\rho_{32})u_{32}+o\left(\mathcal{F}(D)\right).
\end{aligned}
\end{align}
\end{lemma}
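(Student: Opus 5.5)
This is a direct specialization of Lemma \ref{centerdynamics} to the sign vector of a $(1,3)$-sign solitary waves. The plan is to apply that lemma with $K=4$, index the solitons as $0,1,2,3$, and take $\sigma_0=1$ and $\sigma_1=\sigma_2=\sigma_3=-1$.

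\textbf{Step 1: the sign vector is the expected one.} Lemma \ref{centerdynamics} produces some $\sigma$ and $z$, and we must check that they can be taken to be the ones of Lemma \ref{Ksolikakikae}. Its proof uses only the modulated parameters of Lemma \ref{modKsoli}, namely \eqref{modlim1yowa}, \eqref{modlim2yowa} and \eqref{modeqyowa}, together with \cite[Lemmas 3.3, 3.8]{I2}. The functions $z_0,\dots,z_3$ of Lemma \ref{Ksolikakikae}, with the above $\sigma$, satisfy exactly these conditions: \eqref{modlim1}, \eqref{modlim2} and \eqref{modeq}. The signs are also forced by the decomposition. Since the centers separate, an $H^1$-limit of the form $\sum\sigma_kQ(\cdot-z_k)$ determines the multiset of signs, so exactly one coefficient is $+1$ and three are $-1$. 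We label the positive soliton $0$.

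\textbf{Step 2: compute each sum.} In \eqref{zmodvol2}, every term carries the factor $-\sigma_i\sigma_k$.
\begin{itemize}
\item For $k=0$ and $i\in\{1,2,3\}$ we have $\sigma_i\sigma_0=-1$, so the term becomes $+\mathcal{F}(|z_0-z_i|)\frac{z_0-z_i}{|z_0-z_i|}$. With $z_0-z_i=-Z_i$, this is $-\mathcal{F}(\rho_i)u_i$, which gives the first line of \eqref{(1,3)zdynamics}.
\item For $k\in\{1,2,3\}$ and $i=0$ we again have $\sigma_0\sigma_k=-1$. The term is $\mathcal{F}(\rho_k)\frac{z_k-z_0}{\rho_k}=\mathcal{F}(\rho_k)u_k$.
\item For $k\in\{1,2,3\}$ and $i\neq 0,k$ we have $\sigma_i\sigma_k=1$, so the term is $-\mathcal{F}(\rho_{ik})\frac{z_k-z_i}{|z_k-z_i|}$.
\end{itemize}

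\textbf{Step 3: match the notation of \eqref{Zrhoudef2}.} With $Z_{ki}=z_i-z_k$ we get $\frac{z_k-z_i}{|z_k-z_i|}=-u_{ki}$, and $\rho_{ik}=\rho_{ki}$ since $|z_k-z_i|=|z_i-z_k|$. The term from the last case therefore equals $+\mathcal{F}(\rho_{ki})u_{ki}$. This matches the remaining lines of \eqref{(1,3)zdynamics}; for example, $\dot z_1$ contains $\mathcal{F}(\rho_{12})u_{12}$, whose unit vector points from $z_1$ toward $z_2$, which is the expected attraction between like signs.

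The error term $o(\mathcal{F}(D))$ carries over unchanged, with $D$ the minimum over all six pairwise distances. No step is a real obstacle; the only point needing care is the index and sign bookkeeping in $u_{jk}$ versus $u_{kj}$.
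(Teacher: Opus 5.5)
Your proposal is correct and is the same argument as the paper's one-line proof: specialize Lemma~\ref{centerdynamics} with $\sigma_0=1$, $\sigma_1=\sigma_2=\sigma_3=-1$, then track the signs, using $u_{ki}=-\frac{z_k-z_i}{|z_k-z_i|}$ and $\rho_{ik}=\rho_{ki}$. Your Step~1, which identifies the modulated parameters of Lemma~\ref{Ksolikakikae} with those to which Lemma~\ref{centerdynamics} applies and shows the sign pattern is forced, spells out what the paper leaves implicit under ``careful attention to the signs.''
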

\begin{proof}
This follows by applying Lemma \ref{centerdynamics} with careful attention to the signs, and then performing a direct computation.
\end{proof}
Next, we investigate the identities associated with \eqref{Zrhoudef1} and \eqref{Zrhoudef2}. We define $c_{12},c_{13},c_{23}$ as 
\begin{align}\label{cdef}
c_{12}=u_1\cdot u_2,\ c_{13}=u_1\cdot u_3,\ c_{23}=u_2\cdot u_3.
\end{align}
We note that the matrix
\begin{align*}
\begin{pmatrix}
1 &c_{12} & c_{13}
\\
c_{12}&1&c_{23}
\\
c_{13}&c_{23}&1
\end{pmatrix}
\end{align*}
is a Gram matrix. Hence, let $A(t)$ denote its determinant; that is,
\begin{align}\label{Adef}
A(t)=1+2c_{12}c_{23}c_{13}-c_{12}^2-c_{13}^2-c_{23}^2.
\end{align}
Then, we have for $t\geq 0$
\begin{align}\label{Apositive}
A(t)\geq 0.
\end{align}
In this setting, we obtain the following identities.
\begin{lemma}\label{daijisiki}
Let $\vec{u}$ be a $(1,3)$-sign solitary waves. Then the following hold.
\begin{enumerate}
\item $Z_1,Z_2,Z_3$ satisfy for $t\gg 1$
\begin{align}\label{Zes1}
\begin{aligned}
\dot{Z}_1&=2\mathcal{F}(\rho_1)u_1+\mathcal{F}(\rho_2)u_2+\mathcal{F}(\rho_3)u_3
\\
&\quad +\mathcal{F}(\rho_{12})u_{12}+\mathcal{F}(\rho_{13})u_{13}+o\left(\mathcal{F}(D)\right),
\\
\dot{Z}_2&=\mathcal{F}(\rho_1)u_1+2\mathcal{F}(\rho_2)u_2+\mathcal{F}(\rho_3)u_3
\\
&\quad+\mathcal{F}(\rho_{21})u_{21}+\mathcal{F}(\rho_{23})u_{23}+o\left(\mathcal{F}(D)\right),
\\
\dot{Z}_3&=\mathcal{F}(\rho_1)u_1+\mathcal{F}(\rho_2)u_2+2\mathcal{F}(\rho_3)u_3
\\
&\quad +\mathcal{F}(\rho_{31})u_{31}+\mathcal{F}(\rho_{32})u_{32}+o\left(\mathcal{F}(D)\right).
\end{aligned}
\end{align}

\item $\rho_1,\ \rho_2,\ \rho_3$ satisfy for $t\gg 1$
\begin{align}\label{rhoes1}
\begin{aligned}
\dot{\rho}_1&=2\mathcal{F}(\rho_1)+\mathcal{F}(\rho_2)c_{12}+\mathcal{F}(\rho_3)c_{13}
\\
&\quad+\mathcal{F}(\rho_{12})u_{12}\cdot u_1+\mathcal{F}(\rho_{13})u_{13}\cdot u_1+o\left(\mathcal{F}(D)\right),
\\
\dot{\rho}_2&=\mathcal{F}(\rho_1)c_{12}+2\mathcal{F}(\rho_2)+\mathcal{F}(\rho_3)c_{23}
\\
&\quad+\mathcal{F}(\rho_{21})u_{21}\cdot u_2+\mathcal{F}(\rho_{23})u_{23}\cdot u_2+o\left(\mathcal{F}(D)\right),
\\
\dot{\rho}_3&=\mathcal{F}(\rho_{1})c_{13}+\mathcal{F}(\rho_2)c_{23}+2\mathcal{F}(\rho_3)
\\
&\quad+\mathcal{F}(\rho_{31})u_{31}\cdot u_3+\mathcal{F}(\rho_{32})u_{32}\cdot u_3+o\left(\mathcal{F}(D)\right).
\end{aligned}
\end{align}

\item $c_{12},c_{13},c_{23}$ satisfy for $t\gg 1$
\begin{align}\label{ces1}
\begin{aligned}
\dot{c}_{12}&=(1-c_{12}^2)\left\{\frac{\mathcal{F}(\rho_1)}{\rho_2}+\frac{\mathcal{F}(\rho_2)}{\rho_1}+\frac{\mathcal{F}(\rho_{12})}{\rho_{12}}\left(\frac{\rho_2}{\rho_1}+\frac{\rho_1}{\rho_2}\right)\right\}
\\
&\quad+\mathcal{F}(\rho_3)\left( \frac{c_{23}-c_{13}c_{12}}{\rho_1}+\frac{c_{13}-c_{23}c_{12}}{\rho_2}\right)
\\
&\quad+\frac{\mathcal{F}(\rho_{13})}{\rho_{13}}\frac{\rho_3(c_{23}-c_{12}c_{13})}{\rho_1}+\frac{\mathcal{F}(\rho_{23})}{\rho_{23}}\frac{\rho_3(c_{13}-c_{12}c_{23})}{\rho_2}+o\left(\frac{\mathcal{F}(D)}{D}\right),
\\
\dot{c}_{13}&=(1-c_{13}^2)\left\{\frac{\mathcal{F}(\rho_1)}{\rho_3}+\frac{\mathcal{F}(\rho_3)}{\rho_1}+\frac{\mathcal{F}(\rho_{13})}{\rho_{13}}\left(\frac{\rho_3}{\rho_1}+\frac{\rho_1}{\rho_3}\right)\right\}
\\
&\quad+\mathcal{F}(\rho_2)\left( \frac{c_{23}-c_{12}c_{13}}{\rho_1}+\frac{c_{12}-c_{13}c_{23}}{\rho_3}\right)
\\
&\quad+\frac{\mathcal{F}(\rho_{12})}{\rho_{12}}\frac{\rho_2(c_{23}-c_{12}c_{13})}{\rho_1}+\frac{\mathcal{F}(\rho_{23})}{\rho_{23}}\frac{\rho_2(c_{12}-c_{13}c_{23})}{\rho_3}+o\left(\frac{\mathcal{F}(D)}{D}\right),
\\
\dot{c}_{23}&=(1-c_{23}^2)\left\{\frac{\mathcal{F}(\rho_2)}{\rho_3}+\frac{\mathcal{F}(\rho_3)}{\rho_2}+\frac{\mathcal{F}(\rho_{23})}{\rho_{23}}\left(\frac{\rho_3}{\rho_2}+\frac{\rho_2}{\rho_3}\right)\right\}
\\
&\quad+\mathcal{F}(\rho_1)\left( \frac{c_{13}-c_{12}c_{23}}{\rho_2}+\frac{c_{12}-c_{13}c_{23}}{\rho_3}\right)
\\
&\quad+\frac{\mathcal{F}(\rho_{12})}{\rho_{12}}\frac{\rho_1(c_{13}-c_{12}c_{23})}{\rho_2}+\frac{\mathcal{F}(\rho_{13})}{\rho_{13}}\frac{\rho_1(c_{12}-c_{13}c_{23})}{\rho_3}+o\left(\frac{\mathcal{F}(D)}{D}\right).
\end{aligned}
\end{align}

\item $\rho_{12},\ \rho_{13},\ \rho_{23}$ satisfy for $t\gg 1$
\begin{align}\label{rhojkes1}
\begin{aligned}
\dot{\rho}_{12}&=-2\mathcal{F}(\rho_{12})+\mathcal{F}(\rho_2)(u_{12}\cdot u_2)+\mathcal{F}(\rho_1)(u_1\cdot u_{21})
\\
&\quad -\mathcal{F}(\rho_{23})(u_{21}\cdot u_{23})-\mathcal{F}(\rho_{13})(u_{12}\cdot u_{13})+o\left(\mathcal{F}(D)\right),
\\
\dot{\rho}_{13}&=-2\mathcal{F}(\rho_{13})+\mathcal{F}(\rho_1)(u_{31}\cdot u_1)+\mathcal{F}(\rho_3)(u_{13}\cdot u_3)
\\
&\quad-\mathcal{F}(\rho_{12})(u_{13}\cdot u_{12})-\mathcal{F}(\rho_{23})(u_{31}\cdot u_{32})+o\left(\mathcal{F}(D)\right),
\\
\dot{\rho}_{23}&=-2\mathcal{F}(\rho_{23})+\mathcal{F}(\rho_2)(u_{32}\cdot u_2)+\mathcal{F}(\rho_3)(u_{23}\cdot u_3)
\\
&\quad-\mathcal{F}(\rho_{12})(u_{21}\cdot u_{23})-\mathcal{F}(\rho_{13})(u_{32}\cdot u_{31})+o\left(\mathcal{F}(D)\right).
\end{aligned}
\end{align}
\end{enumerate}
\end{lemma}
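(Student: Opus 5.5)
The statement is Lemma \ref{daijisiki}. The plan is a direct computation from Lemma \ref{(1,3)centerdynamics}, using only the identities $Z_k=z_k-z_0$, $\rho_k=|Z_k|$, $u_k=Z_k/\rho_k$ and their pairwise analogues.

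For (1), subtract the equation for $\dot z_0$ from that for $\dot z_k$ in \eqref{(1,3)zdynamics}. For $k=1$, the term $\mathcal{F}(\rho_1)u_1$ appears once from $\dot z_1$ and once from $-\dot z_0$, which gives the coefficient $2$. The terms $\mathcal{F}(\rho_2)u_2$ and $\mathcal{F}(\rho_3)u_3$ come only from $-\dot z_0$. The terms $\mathcal{F}(\rho_{12})u_{12}$ and $\mathcal{F}(\rho_{13})u_{13}$ come from $\dot z_1$. The errors combine to $o(\mathcal{F}(D))$. The cases $k=2,3$ are symmetric.

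For (2), use $\dot\rho_k=u_k\cdot\dot Z_k$, which is valid because $\rho_k\to\infty$, so $\rho_k>0$ for $t\gg1$. Then substitute (1) and use $|u_k|=1$ and $u_j\cdot u_k=c_{jk}$. Since $|u_k|=1$, the error remains $o(\mathcal{F}(D))$.

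For (4), use $\dot\rho_{jk}=u_{jk}\cdot(\dot z_k-\dot z_j)$ together with \eqref{(1,3)zdynamics}. For $(j,k)=(1,2)$ this gives $u_{12}\cdot\bigl(\mathcal{F}(\rho_2)u_2+\mathcal{F}(\rho_{21})u_{21}+\mathcal{F}(\rho_{23})u_{23}-\mathcal{F}(\rho_1)u_1-\mathcal{F}(\rho_{12})u_{12}-\mathcal{F}(\rho_{13})u_{13}\bigr)$. Since $u_{21}=-u_{12}$, this equals $-2\mathcal{F}(\rho_{12})$ plus the remaining cross terms. These match the statement after rewriting $u_{12}\cdot u_{23}=-u_{21}\cdot u_{23}$ and $-u_{12}\cdot u_1=u_1\cdot u_{21}$.

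Part (3) is the main bookkeeping step. Start from $\dot u_k=\rho_k^{-1}(I-u_k\otimes u_k)\dot Z_k$, so that
\begin{align*}
\dot c_{12}=\frac{1}{\rho_1}\bigl(u_2-c_{12}u_1\bigr)\cdot\dot Z_1+\frac{1}{\rho_2}\bigl(u_1-c_{12}u_2\bigr)\cdot\dot Z_2.
\end{align*}
Substitute (1). The projection kills every term parallel to $u_1$ in $\dot Z_1$ and every term parallel to $u_2$ in $\dot Z_2$. The $\mathcal{F}(\rho_2)u_2$ term in $\dot Z_1$ contributes $(1-c_{12}^2)\mathcal{F}(\rho_2)/\rho_1$, and symmetrically for $\mathcal{F}(\rho_1)u_1$ in $\dot Z_2$. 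The $\mathcal{F}(\rho_3)u_3$ terms give $\mathcal{F}(\rho_3)\bigl((c_{23}-c_{12}c_{13})/\rho_1+(c_{13}-c_{12}c_{23})/\rho_2\bigr)$.

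For the like-sign terms, write $u_{jk}=(\rho_k u_k-\rho_j u_j)/\rho_{jk}$. Then, for instance,
\begin{align*}
(u_2-c_{12}u_1)\cdot u_{12}=\frac{\rho_2(1-c_{12}^2)}{\rho_{12}}.
\end{align*}
The term from $\dot Z_2$ is $u_{21}=-u_{12}$, projected against $u_1-c_{12}u_2$, and gives $\rho_1(1-c_{12}^2)/\rho_{12}$. Together these produce $(1-c_{12}^2)\frac{\mathcal{F}(\rho_{12})}{\rho_{12}}\bigl(\frac{\rho_2}{\rho_1}+\frac{\rho_1}{\rho_2}\bigr)$. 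Similarly, $u_{13}$ projected against $u_2-c_{12}u_1$ gives $\rho_3(c_{23}-c_{12}c_{13})/\rho_{13}$, because the $u_1$ part of $u_{13}$ is annihilated, and likewise for $u_{23}$.

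The remaining issue is the error term. The $o(\mathcal{F}(D))$ errors in $\dot Z_k$ are multiplied by $1/\rho_k\le 1/D$, so they become $o(\mathcal{F}(D)/D)$, as claimed. The equations for $c_{13}$ and $c_{23}$ follow by permuting indices. I expect the only real obstacle to be keeping the signs straight, in particular $u_{kj}=-u_{jk}$ and which vectors the projections annihilate; there is no analytic difficulty.
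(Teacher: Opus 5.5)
Your proposal is correct and follows essentially the same route as the paper. Part (1) subtracts the $\dot z_0$ equation, and parts (2) and (4) dot with the appropriate unit vectors. Part (3) uses $\dot u_k=(\dot Z_k-\dot\rho_k u_k)/\rho_k$, which is your projection form, together with the identities coming from $u_{jk}=(\rho_k u_k-\rho_j u_j)/\rho_{jk}$; the $o(\mathcal{F}(D))$ errors gain the factor $1/\rho_k\le 1/D$.
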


\begin{proof}
By the symmetry of the formulas, it suffices to treat $Z_1,\rho_1,c_{12},\rho_{12}$, and the remaining cases follow in the same way.\ First, for $Z_1$, by Lemma \ref{(1,3)centerdynamics} we have
\begin{align*}
\dot{Z}_1&=\dot{z}_1-\dot{z}_0
\\
&=\left(\mathcal{F}(\rho_1)u_1+\mathcal{F}(\rho_{12})u_{12}+\mathcal{F}(\rho_{13})u_{13}\right)
\\
&\quad-\left(-\mathcal{F}(\rho_1)u_1-\mathcal{F}(\rho_2)u_2-\mathcal{F}(\rho_3)u_3\right)+o\left(\mathcal{F}(D)\right)
\\
&=2\mathcal{F}(\rho_1)u_1+\mathcal{F}(\rho_2)u_2+\mathcal{F}(\rho_3)u_3
\\
&\quad +\mathcal{F}(\rho_{12})u_{12}+\mathcal{F}(\rho_{13})u_{13}+o\left(\mathcal{F}(D)\right),
\end{align*}
and therefore the same computation yields \eqref{Zes1} for $Z_2,Z_3$.
\\
Next, we estimate $\rho_1$. By direct computation, we have
\begin{align*}
\dot{\rho}_1&=\frac{\dot{Z}_1\cdot Z_1}{\rho_1}
\\
&=u_1\cdot \dot{Z}_1
\\
&=2\mathcal{F}(\rho_1)+\mathcal{F}(\rho_2)c_{12}+\mathcal{F}(\rho_3)c_{13}
\\
&\quad+\mathcal{F}(\rho_{12})u_{12}\cdot u_1+\mathcal{F}(\rho_{13})u_{13}\cdot u_1+o\left(\mathcal{F}(D)\right).
\end{align*}
Thus, the same computation yields \eqref{rhoes1} for $\rho_2,\rho_3$.
\\
Next, we compute $c_{12}$.\ First, for $u_1$, a direct computation gives
\begin{align*}
\dot{u}_1&=\frac{\rho_1\dot{Z}_1-\dot{\rho}_1Z_1}{\rho_1^2}
\\
&=\frac{\dot{Z}_1-\dot{\rho}_1u_1}{\rho_1}
\\
&=\frac{2\mathcal{F}(\rho_1)u_1+\mathcal{F}(\rho_2)u_2+\mathcal{F}(\rho_3)u_3+\mathcal{F}(\rho_{12})u_{12}+\mathcal{F}(\rho_{13})u_{13}}{\rho_1}
\\
&\quad-\frac{2\mathcal{F}(\rho_1)+\mathcal{F}(\rho_2)c_{12}+\mathcal{F}(\rho_3)c_{13}+\mathcal{F}(\rho_{12})u_{12}\cdot u_1+\mathcal{F}(\rho_{13})u_{13}\cdot u_1}{\rho_1}u_1
\\
&\quad+o\left(\frac{\mathcal{F}(D)}{D}\right)
\\
&=\frac{\mathcal{F}(\rho_2)(u_2-c_{12}u_1)+\mathcal{F}(\rho_3)(u_3-c_{13}u_1)}{\rho_1}
\\
&\quad+\frac{\mathcal{F}(\rho_{12})(u_{12}-(u_{12}\cdot u_1)u_1)+\mathcal{F}(\rho_{13})(u_{13}-(u_{13}\cdot u_1)u_1)}{\rho_1}+o\left(\frac{\mathcal{F}(D)}{D}\right)
\end{align*}
as desired. Similarly, note that $u_2$ satisfies
\begin{align*}
\dot{u}_2&=\frac{\mathcal{F}(\rho_1)(u_1-c_{12}u_2)+\mathcal{F}(\rho_3)(u_3-c_{23}u_2)}{\rho_2}
\\
&\quad+\frac{\mathcal{F}(\rho_{12})(u_{21}-(u_{21}\cdot u_2)u_2)+\mathcal{F}(\rho_{23})(u_{23}-(u_{23}\cdot u_2)u_2)}{\rho_2}+o\left(\frac{\mathcal{F}(D)}{D}\right)
\end{align*}
and hence
\begin{align}\label{c12cal1}
\dot{c}_{12}&=\dot{u}_1\cdot u_2+u_1\cdot \dot{u}_2
\end{align}
can be written as the sum of
\begin{align}\label{c12cal2}
\begin{aligned}
\dot{u}_1\cdot u_2&=\frac{\mathcal{F}(\rho_2)(1-c_{12}^2)+\mathcal{F}(\rho_3)(c_{23}-c_{13}c_{12})}{\rho_1}
\\
&\quad+\frac{\mathcal{F}(\rho_{12})(u_{12}\cdot u_2-(u_{12}\cdot u_1)c_{12})+\mathcal{F}(\rho_{13})(u_{13}\cdot u_2-(u_{13}\cdot u_1)c_{12})}{\rho_1}
\\
&\quad+o\left(\frac{\mathcal{F}(D)}{D}\right)
\end{aligned}
\end{align}
and
\begin{align}\label{c12cal3}
\begin{aligned}
u_1\cdot \dot{u}_2&=\frac{\mathcal{F}(\rho_1)(1-c_{12}^2)+\mathcal{F}(\rho_3)(c_{13}-c_{23}c_{12})}{\rho_2}
\\
&\quad+\frac{ \mathcal{F}(\rho_{12})(u_{21}\cdot u_1-(u_{21}\cdot u_2)c_{12})+\mathcal{F}(\rho_{23})(u_{23}\cdot u_1-(u_{23}\cdot u_2)c_{12})}{\rho_2}
\\
&\quad+o\left(\frac{\mathcal{F}(D)}{D}\right).
\end{aligned}
\end{align}
Therefore, by \eqref{c12cal1}, \eqref{c12cal2}, \eqref{c12cal3} we obtain
\begin{align}\label{c12cal4}
\begin{aligned}
\dot{c}_{12}&=(1-c_{12}^2)\left(\frac{\mathcal{F}(\rho_1)}{\rho_2}+\frac{\mathcal{F}(\rho_2)}{\rho_1}\right)
\\
&\quad+\mathcal{F}(\rho_3)\left(\frac{c_{23}-c_{13}c_{12}}{\rho_1}+\frac{c_{13}-c_{23}c_{12}}{\rho_2} \right)
\\
&\quad+\mathcal{F}(\rho_{12})\left( \frac{u_{12}\cdot u_2-(u_{12}\cdot u_1)c_{12}}{\rho_1}+\frac{u_{21}\cdot u_1-(u_{21}\cdot u_2)c_{12}}{\rho_2}\right)
\\
&\quad+\mathcal{F}(\rho_{13})\frac{u_{13}\cdot u_2-(u_{13}\cdot u_1)c_{12}}{\rho_1}+\mathcal{F}(\rho_{23})\frac{u_{23}\cdot u_1-(u_{23}\cdot u_2)c_{12}}{\rho_2}
\\
&\quad+o\left(\frac{\mathcal{F}(D)}{D}\right).
\end{aligned}
\end{align}
Moreover,
\begin{align*}
u_{12}\cdot u_2-(u_{12}\cdot u_1)c_{12}&=\frac{(Z_2-Z_1)\cdot Z_2}{\rho_{12}\rho_2}-c_{12}\frac{(Z_2-Z_1)\cdot Z_1}{\rho_{12}\rho_1}
\\
&=\frac{\rho_2-\rho_1c_{12}}{\rho_{12}}
-c_{12}\frac{\rho_2c_{12}-\rho_1}{\rho_{12}}\\
&=\frac{\rho_2-\rho_1c_{12}-\rho_2c_{12}^2+\rho_1c_{12}}{\rho_{12}}
=\frac{\rho_2(1-c_{12}^2)}{\rho_{12}},
\end{align*}
and similarly
\begin{align*}
u_{21}\cdot u_1-(u_{21}\cdot u_2)c_{12}=\frac{\rho_1(1-c_{12}^2)}{\rho_{12}},
\end{align*}
which implies
\begin{align}\label{c12cal5}
\begin{aligned}
&\quad \mathcal{F}(\rho_{12})\left( \frac{u_{12}\cdot u_2-(u_{12}\cdot u_1)c_{12}}{\rho_1}+\frac{u_{21}\cdot u_1-(u_{21}\cdot u_2)c_{12}}{\rho_2}\right)
\\
&=(1-c_{12}^2)\frac{\mathcal{F}(\rho_{12})}{\rho_{12}}\left(\frac{\rho_2}{\rho_1}+\frac{\rho_1}{\rho_2}\right).
\end{aligned}
\end{align}
Next, a direct computation gives
\begin{align}\label{c12cal6}
\begin{aligned}
u_{13}\cdot u_2-(u_{13}\cdot u_1)c_{12}&=\frac{\rho_3c_{23}-\rho_1c_{12}}{\rho_{13}}
-c_{12}\frac{\rho_3c_{13}-\rho_1}{\rho_{13}}\\
&=\frac{\rho_3c_{23}-\rho_1c_{12}-\rho_3c_{12}c_{13}+\rho_1c_{12}}{\rho_{13}}\\
&=\frac{\rho_3\bigl(c_{23}-c_{12}c_{13}\bigr)}{\rho_{13}}.
\end{aligned}
\end{align}
and similarly
\begin{align}\label{c12cal7}
u_{23}\cdot u_1-(u_{23}\cdot u_2)c_{12}=\frac{\rho_3(c_{13}-c_{12}c_{23})}{\rho_{23}}.
\end{align}
Therefore, by \eqref{c12cal4}-\eqref{c12cal7}, we obtain the $c_{12}$ equation in \eqref{ces1}. The cases $c_{23},\ c_{13}$ follow by the same computation.
\\
Finally, we estimate $\rho_{12}$. Noting that $Z_{12}=Z_2-Z_1$, we obtain from \eqref{Zes1} that
\begin{align*}
\dot{Z}_{12}&=-\mathcal{F}(\rho_1)u_1+\mathcal{F}(\rho_2)u_2-2\mathcal{F}(\rho_{12})u_{12}+\mathcal{F}(\rho_{23})u_{23}-\mathcal{F}(\rho_{13})u_{13}+o\left(\mathcal{F}(D)\right)
\end{align*}
and hence
\begin{align*}
\dot{\rho}_{12}&=u_{12}\cdot \dot{Z}_{12}
\\
&=-2\mathcal{F}(\rho_{12})+\mathcal{F}(\rho_2)(u_{12}\cdot u_2)+\mathcal{F}(\rho_1)(u_1\cdot u_{21})
\\
&\quad -\mathcal{F}(\rho_{23})(u_{21}\cdot u_{23})-\mathcal{F}(\rho_{13})(u_{12}\cdot u_{13})+o\left(\mathcal{F}(D)\right),
\end{align*}
which yields the $\rho_{12}$ case of \eqref{rhojkes1}.\ The cases $\rho_{23},\ \rho_{13}$ are obtained similarly, and this lemma is proved.
\end{proof}

By the above computations, if $\vec{u}$ is a $(1,3)$-sign solitary waves and satisfies \eqref{Vrep}, then Lemma \ref{daijisiki} can be refined as in Lemma \ref{epzes}.
\begin{corollary}\label{corollary}
Let $\vec{u}$ be a $(1,3)$-sign solitary waves and suppose that it satisfies \eqref{Vrep}. Then for any $1<\theta<\min{(p-1,2)}$, the following hold.
\begin{enumerate}
\item $Z_1,Z_2,Z_3$ satisfy for $t\gg 1$
\begin{align}\label{Zes10210}
\begin{aligned}
\dot{Z}_1&=2\mathcal{F}(\rho_1)u_1+\mathcal{F}(\rho_2)u_2+\mathcal{F}(\rho_3)u_3
\\
&\quad +\mathcal{F}(\rho_{12})u_{12}+\mathcal{F}(\rho_{13})u_{13}+O\left(e^{-\theta D}\right),
\\
\dot{Z}_2&=\mathcal{F}(\rho_1)u_1+2\mathcal{F}(\rho_2)u_2+\mathcal{F}(\rho_3)u_3
\\
&\quad+\mathcal{F}(\rho_{21})u_{21}+\mathcal{F}(\rho_{23})u_{23}+O\left(e^{-\theta D}\right),
\\
\dot{Z}_3&=\mathcal{F}(\rho_1)u_1+\mathcal{F}(\rho_2)u_2+2\mathcal{F}(\rho_3)u_3
\\
&\quad +\mathcal{F}(\rho_{31})u_{31}+\mathcal{F}(\rho_{32})u_{32}+O\left(e^{-\theta D}\right).
\end{aligned}
\end{align}

\item $\rho_1,\ \rho_2,\ \rho_3$ satisfy for $t\gg 1$
\begin{align}\label{rhoes10210}
\begin{aligned}
\dot{\rho_1}&=2\mathcal{F}(\rho_1)+\mathcal{F}(\rho_2)c_{12}+\mathcal{F}(\rho_3)c_{13}
\\
&\quad+\mathcal{F}(\rho_{12})u_{12}\cdot u_1+\mathcal{F}(\rho_{13})u_{13}\cdot u_1+O\left(e^{-\theta D}\right),
\\
\dot{\rho}_2&=\mathcal{F}(\rho_1)c_{12}+2\mathcal{F}(\rho_2)+\mathcal{F}(\rho_3)c_{23}
\\
&\quad+\mathcal{F}(\rho_{21})u_{21}\cdot u_2+\mathcal{F}(\rho_{23})u_{23}\cdot u_2+O\left(e^{-\theta D}\right),
\\
\dot{\rho}_3&=\mathcal{F}(\rho_{1})c_{13}+\mathcal{F}(\rho_2)c_{23}+2\mathcal{F}(\rho_3)
\\
&\quad+\mathcal{F}(\rho_{31})u_{31}\cdot u_3+\mathcal{F}(\rho_{32})u_{32}\cdot u_3+O\left(e^{-\theta D}\right).
\end{aligned}
\end{align}

\item $c_{12},c_{13},c_{23}$ satisfy for $t\gg 1$
\begin{align}\label{ces10210}
\begin{aligned}
\dot{c}_{12}&=(1-c_{12}^2)\left\{\frac{\mathcal{F}(\rho_1)}{\rho_2}+\frac{\mathcal{F}(\rho_2)}{\rho_1}+\frac{\mathcal{F}(\rho_{12})}{\rho_{12}}\left(\frac{\rho_2}{\rho_1}+\frac{\rho_1}{\rho_2}\right)\right\}
\\
&\quad+\mathcal{F}(\rho_3)\left( \frac{c_{23}-c_{13}c_{12}}{\rho_1}+\frac{c_{13}-c_{23}c_{12}}{\rho_2}\right)
\\
&\quad+\frac{\mathcal{F}(\rho_{13})}{\rho_{13}}\frac{\rho_3(c_{23}-c_{12}c_{13})}{\rho_1}+\frac{\mathcal{F}(\rho_{23})}{\rho_{23}}\frac{\rho_3(c_{13}-c_{12}c_{23})}{\rho_2}+O\left(e^{-\theta D}\right),
\\
\dot{c}_{13}&=(1-c_{13}^2)\left\{\frac{\mathcal{F}(\rho_1)}{\rho_3}+\frac{\mathcal{F}(\rho_3)}{\rho_1}+\frac{\mathcal{F}(\rho_{13})}{\rho_{13}}\left(\frac{\rho_3}{\rho_1}+\frac{\rho_1}{\rho_3}\right)\right\}
\\
&\quad+\mathcal{F}(\rho_2)\left( \frac{c_{23}-c_{12}c_{13}}{\rho_1}+\frac{c_{12}-c_{13}c_{23}}{\rho_3}\right)
\\
&\quad+\frac{\mathcal{F}(\rho_{12})}{\rho_{12}}\frac{\rho_2(c_{23}-c_{12}c_{13})}{\rho_1}+\frac{\mathcal{F}(\rho_{23})}{\rho_{23}}\frac{\rho_2(c_{12}-c_{13}c_{23})}{\rho_3}+O\left(e^{-\theta D}\right),
\\
\dot{c}_{23}&=(1-c_{23}^2)\left\{\frac{\mathcal{F}(\rho_2)}{\rho_3}+\frac{\mathcal{F}(\rho_3)}{\rho_2}+\frac{\mathcal{F}(\rho_{23})}{\rho_{23}}\left(\frac{\rho_3}{\rho_2}+\frac{\rho_2}{\rho_3}\right)\right\}
\\
&\quad+\mathcal{F}(\rho_1)\left( \frac{c_{13}-c_{12}c_{23}}{\rho_2}+\frac{c_{12}-c_{13}c_{23}}{\rho_3}\right)
\\
&\quad+\frac{\mathcal{F}(\rho_{12})}{\rho_{12}}\frac{\rho_1(c_{13}-c_{12}c_{23})}{\rho_2}+\frac{\mathcal{F}(\rho_{13})}{\rho_{13}}\frac{\rho_1(c_{12}-c_{13}c_{23})}{\rho_3}+O\left(e^{-\theta D}\right).
\end{aligned}
\end{align}

\item $\rho_{12},\ \rho_{13},\ \rho_{23}$ satisfy for $t\gg 1$
\begin{align}\label{rhojkes10210}
\begin{aligned}
\dot{\rho}_{12}&=-2\mathcal{F}(\rho_{12})+\mathcal{F}(\rho_2)(u_{12}\cdot u_2)+\mathcal{F}(\rho_1)(u_1\cdot u_{21})
\\
&\quad -\mathcal{F}(\rho_{23})(u_{21}\cdot u_{23})-\mathcal{F}(\rho_{13})(u_{12}\cdot u_{13})+O\left(e^{-\theta D}\right),
\\
\dot{\rho}_{13}&=-2\mathcal{F}(\rho_{13})+\mathcal{F}(\rho_1)(u_{31}\cdot u_1)+\mathcal{F}(\rho_3)(u_{13}\cdot u_3)
\\
&\quad-\mathcal{F}(\rho_{12})(u_{13}\cdot u_{12})-\mathcal{F}(\rho_{23})(u_{31}\cdot u_{32})+O\left(e^{-\theta D}\right),
\\
\dot{\rho}_{23}&=-2\mathcal{F}(\rho_{23})+\mathcal{F}(\rho_2)(u_{32}\cdot u_2)+\mathcal{F}(\rho_3)(u_{23}\cdot u_3)
\\
&\quad-\mathcal{F}(\rho_{12})(u_{21}\cdot u_{23})-\mathcal{F}(\rho_{13})(u_{32}\cdot u_{31})+O\left(e^{-\theta D}\right).
\end{aligned}
\end{align}
\end{enumerate}
\end{corollary}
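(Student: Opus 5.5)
The plan is to repeat the proof of Lemma \ref{daijisiki} line by line, with Lemma \ref{epzes} used in place of Lemma \ref{centerdynamics}. Under \eqref{Vrep}, Lemma \ref{epzes} gives, for each $0\le k\le 3$,
\begin{align*}
\dot z_k=-\sum_{i\neq k}\sigma_i\sigma_k\mathcal{F}(|z_k-z_i|)\frac{z_k-z_i}{|z_k-z_i|}+O\left(e^{-\theta D}\right).
\end{align*}
With the sign pattern $\sigma_0=1$ and $\sigma_1=\sigma_2=\sigma_3=-1$, this is exactly the system \eqref{(1,3)zdynamics} of Lemma \ref{(1,3)centerdynamics}. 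The only change is that the error $o(\mathcal{F}(D))$ is replaced by $O(e^{-\theta D})$. Subtracting the equation for $\dot z_0$ from those for $\dot z_k$ then yields \eqref{Zes10210} at once.

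Every subsequent identity in the proof of Lemma \ref{daijisiki} is obtained from the $\dot Z_k$ equations by algebraic operations. The $\rho$ equations come from $\dot\rho_k=u_k\cdot\dot Z_k$, and the $\rho_{jk}$ equations from $\dot\rho_{jk}=u_{jk}\cdot\dot Z_{jk}$ with $\dot Z_{jk}=\dot Z_k-\dot Z_j$. Since $u_k$ and $u_{jk}$ are unit vectors, the error stays $O(e^{-\theta D})$, which gives \eqref{rhoes10210} and \eqref{rhojkes10210}.

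The only point needing care is the angular equations \eqref{ces10210}. There we divide by $\rho_k$ when forming $\dot u_k=(\dot Z_k-\dot\rho_k u_k)/\rho_k$, so the error becomes $O(e^{-\theta D}/\rho_k)$. Since $\rho_k\ge D\to\infty$ by \eqref{modlim2}, this error is $O(e^{-\theta D})$ for $t\gg1$, and in fact it is even smaller. Then $\dot c_{12}=\dot u_1\cdot u_2+u_1\cdot\dot u_2$ carries an error of the same order. The remaining simplifications, \eqref{c12cal5}--\eqref{c12cal7}, are exact algebraic identities, so the main terms are unchanged. I do not expect a genuine obstacle: the statement is essentially bookkeeping of error terms. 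The one thing to check is that the hypotheses of Lemma \ref{epzes} hold, namely \eqref{modlim1yowa}, \eqref{modlim2yowa}, \eqref{modeqyowa}, and \eqref{Vrep}. The first three are supplied by Lemma \ref{Ksolikakikae}, and \eqref{Vrep} is assumed.
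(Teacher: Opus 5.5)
Your proposal is correct and follows essentially the same route as the paper: the paper's one-line justification is to rerun the computations of Lemma \ref{daijisiki} with \eqref{newzes} from Lemma \ref{epzes} in place of Lemma \ref{centerdynamics}. Your version just makes that bookkeeping explicit (sign pattern, the $1/\rho_k$ division in the angular equations, and the hypotheses of Lemma \ref{epzes} via Lemma \ref{Ksolikakikae}).
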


\section{Estimates on interactions between like-signed solitons}
In this section and what follows, we analyze the behavior of the soliton centers. In the $(1,3)$-sign configuration, there are six pairwise interactions, which makes the analysis of the long-time soliton dynamics more delicate. We therefore begin by comparing the distances between solitons of the same sign and those between solitons of opposite signs. To this end, we first introduce several definitions. First, we define $\tilde{D}$ and $\hat{D}$ as
\begin{align}\label{tildehatD}
\tilde{D}=\min{(\rho_{12},\rho_{23},\rho_{13})},\ \hat{D}=\min{(\rho_1,\rho_2,\rho_3)}.
\end{align}
Then, we have
\begin{align}\label{Datarimae1}
D=\min{(\tilde{D},\hat{D})}.
\end{align}
The goal of this section is to prove the following proposition concerning $\tilde{D}$.
\begin{proposition}\label{tildeDnagai}
Let $\vec{u}$ be a (1,3)-sign solitary waves. Then, we have for $t\gg 1$
\begin{align}\label{tildeDnagaieq}
\tilde{D}(t)-D(t)>D^{\frac{1}{5}}(t).
\end{align}
\end{proposition}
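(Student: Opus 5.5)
The plan is to argue by contradiction, combining the sign information in $V$ from Lemma \ref{limeplem} with the equations \eqref{rhojkes1} for the like-signed distances. For the $(1,3)$ configuration, \eqref{Vdef} reads
\begin{align*}
V=\mathcal{F}(\rho_1)+\mathcal{F}(\rho_2)+\mathcal{F}(\rho_3)-\mathcal{F}(\rho_{12})-\mathcal{F}(\rho_{13})-\mathcal{F}(\rho_{23}).
\end{align*}
By \eqref{Ves1}, the like-signed terms are dominated, up to $o(\mathcal{F}(D))$, by the opposite-signed ones. Consequently, whenever some like-signed distance, say $\rho_{12}$, satisfies $\rho_{12}\le D+D^{\frac15}$, \eqref{tukaeruF1} forces $\hat{D}\le \rho_{12}+O(1)$. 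This step only uses monotonicity of $\mathcal{F}$ and the exponential comparison, and it gives a geometric constraint: in the triangle with vertices $z_0,z_1,z_2$, the side $\rho_{12}$ is, up to $O(D^{\frac15})$, not longer than the shortest of $\rho_1,\rho_2,\rho_3$. In particular, the angle at $z_0$ opposite $\rho_{12}$ is at most about $60^\circ$ plus an error $O(D^{-\frac45})$.

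The main step is a differential inequality for $\rho_{12}$ in the regime $\rho_{12}-D\le D^{\frac15}$. In \eqref{rhojkes1}, the $\mathcal{F}(\rho_3)$ contributions have already cancelled. The terms from the opposite soliton are $\mathcal{F}(\rho_2)\cos\beta_2+\mathcal{F}(\rho_1)\cos\beta_1$, where $\beta_1,\beta_2$ are the base angles at $z_1,z_2$. I will show these are at most $(1-\kappa)\,2\mathcal{F}(\rho_{12})$ for some fixed $\kappa>0$. If $\rho_1$ or $\rho_2$ exceeds $\rho_{12}+1$, the corresponding factor is exponentially smaller by \eqref{tukaeruF1}. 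Otherwise the triangle is near-isosceles with apex angle at most about $60^\circ$, so $\cos\beta_1+\cos\beta_2\le 1+o(1)$. The remaining terms
\begin{align*}
-\mathcal{F}(\rho_{23})(u_{21}\cdot u_{23})-\mathcal{F}(\rho_{13})(u_{12}\cdot u_{13})
\end{align*}
come from the third like-signed soliton. If $\rho_{13},\rho_{23}\ge\rho_{12}+D^{\frac15}$, they are $O(e^{-D^{\frac15}}\mathcal{F}(D))$. If not, I will instead work with $\tilde{D}$ itself, realised by whichever like-signed pair is closest. By an elementary triangle argument, at the minimizing pair the angles at the two endpoints inside triangle $(z_1,z_2,z_3)$ are at most $90^\circ$ and sum to at least $60^\circ$. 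This makes these terms nonpositive, or at least not positive at order $\mathcal{F}(D)$. The outcome is
\begin{align*}
\dot{\tilde{D}}\le -\kappa\,\mathcal{F}(\tilde{D})+o(\mathcal{F}(D))
\end{align*}
in the Dini sense, whenever $\tilde{D}-D\le D^{\frac15}$.

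To conclude, suppose $\tilde{D}(t_n)-D(t_n)\le D(t_n)^{\frac15}$ along $t_n\to\infty$. I also need the regime to persist, that is, to control $D$ from below while $\tilde{D}$ decreases. Since $\tilde{D}\ge D$ always, and $\dot{\tilde D}<0$ means the gap can only close, the regime is preserved once $\tilde D$ is decreasing. Note that near $\tilde{D}=D$ one has $\mathcal{F}(\tilde D)\gtrsim \mathcal{F}(D)$, so the $o(\mathcal{F}(D))$ error is harmless. Then $\frac{d}{dt}e^{\tilde{D}}\lesssim -\tilde{D}^{-\frac{d-1}{2}}$ by \eqref{gnozenkin1}. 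This drives $\tilde{D}$ down to a bounded value in finite time, contradicting \eqref{modlim2}. Hence for $t\gg1$ one has $\tilde{D}-D>D^{\frac15}$.

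I expect the main obstacle to be the middle step. The angular bound $\cos\beta_1+\cos\beta_2\le1+o(1)$ and the sign of the third-soliton terms must be made uniform, with errors $O(D^{-\frac45})$ from the $D^{\frac15}$ slack, while $\mathcal{F}(\rho_1)$ and $\mathcal{F}(\rho_2)$ are individually comparable to $\mathcal{F}(\rho_{12})$. I would first try a case split according to which of $\rho_1,\rho_2,\rho_{13},\rho_{23}$ lie within $1$ of $\rho_{12}$, treating the far ones as negligible via \eqref{tukaeruF1}.
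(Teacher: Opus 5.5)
There is a genuine gap. Your key step is the inequality $\dot{\tilde D}\le-\kappa\mathcal{F}(\tilde D)+o(\mathcal{F}(D))$ on the regime $\tilde D-D\le D^{1/5}$. It is false, and it cannot be derived from the pointwise information you use (\eqref{rhojkes1}, \eqref{repcon43} and triangle geometry). Here is a configuration satisfying all of that information:
\begin{itemize}
\item[(i)] take $z_0=0$, $\rho_1=\rho_2=\rho$ and $\rho_{12}=\rho+1$, so every angle of the triangle $z_0z_1z_2$ is $60^\circ+O(\rho^{-1})$;
\item[(ii)] place $z_3=-2\rho\,(u_1+u_2)/|u_1+u_2|$, so $\rho_3=2\rho$ and $\rho_{13},\rho_{23}\approx 2.9\rho$.
\end{itemize}
Then $D=\hat D=\rho$, $\tilde D=\rho_{12}$, $V>0$, and \eqref{repcon43} holds. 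Yet \eqref{rhojkes1} and \eqref{tukaeruF1} give $\dot{\tilde D}=\dot\rho_{12}=(1-2e^{-1}+o(1))\mathcal{F}(D)>0$. With $\rho_{12}=\rho+D^{1/5}$ instead, you even get $\dot\rho_{12}=(1+o(1))\mathcal{F}(D)$.

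Your case split misses exactly this situation. The regime only gives $\rho_{12}\le\hat D+D^{1/5}$, and \eqref{repcon43} only gives $\rho_{12}>\hat D-C_\circ$. So $\rho_1,\rho_2$ may lie below $\rho_{12}-\log 2$. There $\mathcal{F}(\rho_i)>2\mathcal{F}(\rho_{12})$, and the repulsion from $z_0$ beats the mutual attraction even though $\cos\beta_1+\cos\beta_2\le 1$.

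Your sign claim for the third-soliton terms is also incorrect. An endpoint of the shortest side of $z_1z_2z_3$ can carry an obtuse angle. For example, take the rhombus with $\rho_1=\rho_2=\rho_3=\rho_{12}=\rho_{13}$ and a $120^\circ$ angle at $z_1$; there $-\mathcal{F}(\rho_{13})\,u_{12}\cdot u_{13}=+\tfrac12\mathcal{F}(\rho_{12})$. Consequently $\tilde D$ need not decrease at all, and the contradiction with $D\to\infty$ is not available.

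What does hold is a relative statement: in this regime the opposite-signed distances grow strictly faster than the nearby like-signed ones. In the example above, $\dot\rho_1\approx(\tfrac52-\tfrac{1}{2e})\mathcal{F}(D)$, well above $\dot\rho_{12}$. The paper proves this in three stages:
\begin{itemize}
\item[(a)] $\frac{d}{dt}\bigl(\hat D-\tfrac13(\rho_{12}+\rho_{13}+\rho_{23})\bigr)\gtrsim\mathcal{F}(D)$ when all three like-signed distances are near $D$ (Lemma \ref{henchousystem});
\item[(b)] the analogue with the two smallest $\rho_{jk}$ (Lemma \ref{henchoumid2});
\item[(c)] $\frac{d}{dt}(D_{mod}-\tilde D)\gtrsim\mathcal{F}(D)$ with the shifted $D_{mod}$ of \eqref{Dmoddef} (Lemma \ref{minhenchou}), where $c_\heartsuit$ is chosen to balance the cases $D_{mod}=\rho_1$ and $D_{mod}=\rho_3+c_\heartsuit$ with $c_{13},c_{23}<0$.
\end{itemize}
Each inequality is integrated over a window from Lemma \ref{henchoumod}. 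On that window $\int\mathcal{F}(D)\,ds\sim D^{\epsilon}$, while all distances move only by $O(D^{\epsilon})$, so the geometric hypotheses persist. The outcome is $\hat D-\tilde D\gtrsim D^{\epsilon}\gg C_\circ$, contradicting \eqref{repcon43}.

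So the contradiction must come from the lower bound \eqref{Ves1} on $V$, not from $D\to\infty$. The staging (three, then two, then one like-signed distance near $D$) is what makes the comparison $\dot{\hat D}-\dot\rho_{jk}$ favorable at each step.
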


\subsection{Basic properties of $(1,3)$-sign solitary waves}
First, we collect the basic properties concerning $\tilde{D}$ and $\hat{D}$. By \eqref{Ves1}, we have $t\gg 1$
\begin{align}\label{repcon41}
V=\mathcal{F}(\rho_1)+\mathcal{F}(\rho_2)+\mathcal{F}(\rho_3)-\mathcal{F}(\rho_{12})-\mathcal{F}(\rho_{23})-\mathcal{F}(\rho_{13})>-\frac{1}{100}\mathcal{F}(D).
\end{align}
In particular, by \eqref{repcon41} when $\tilde{D}=D$ holds we have
\begin{align*}
\mathcal{F}(\tilde{D})<\frac{100}{33}\mathcal{F}(\hat{D}),
\end{align*}
and we obtain
\begin{align}\label{repcon42.5}
\mathcal{F}(\hat{D})>\frac{33}{100}\mathcal{F}(D).
\end{align}
By \eqref{repcon42.5}, there exists a constant $C_{\circ}>0$ such that for $t\gg 1$
\begin{align}\label{repcon43}
\hat{D}-C_{\circ}<\tilde{D}.
\end{align}
In this section, we need to analyze delicate variations in the inter-soliton distances. To this end, we establish the following lemma concerning $D$ and $\mathcal{F}(D)$.
\begin{lemma}\label{henchoumod}
Let $0<\epsilon<1$. Then for any $T\gg 1$, there exists $T^{\prime}>T$ such that the following hold.
\begin{enumerate}
\item For $T\leq t\leq T^{\prime}$,
\begin{align}
\left|D(t)-D(T)\right|\leq 2D^{\epsilon}(T).\label{TTprimekawaran}
\end{align}

\item Moreover, there exists $0<c_{\bullet}<1$ depending on $\vec{u}$ such that 
\begin{align}
 c_{\bullet}D^{\epsilon}(T)\leq \int_T^{T^{\prime}} \mathcal{F}(D(s))ds\leq \frac{1}{c_{\bullet}}D^{\epsilon}(T). \label{skeibunes41}
\end{align}
\end{enumerate}
\end{lemma}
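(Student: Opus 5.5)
The plan is to choose $T^{\prime}$ by a stopping rule on the integral $\int_T^t\mathcal{F}(D(s))\,ds$, and then to control the variation of $D$ on $[T,T^{\prime}]$ by this same integral. Both parts of the lemma will then follow together. The two ingredients are a Lipschitz-type bound $|\dot D|\lesssim \mathcal{F}(D)$, and the lower bound \eqref{F(D)lower}, which guarantees that the integral diverges.

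\emph{Step 1: a bound on the variation of $D$.} By \eqref{gnozenkin1}, $g(r)\sim c_g r^{-\frac{d-1}{2}}e^{-r}$ and $g^{\prime}<0$ for $r\gg1$. Hence $\mathcal{F}$ is positive and decreasing on $[R_0,\infty)$ for some $R_0$. By \eqref{modlim2}, every pairwise distance $|z_i-z_j|$ is at least $D(t)\geq R_0$ for $t\gg1$. Consequently $\mathcal{F}(|z_i-z_j|)\leq\mathcal{F}(D)$, and Lemma \ref{(1,3)centerdynamics} gives $|\dot z_k|\leq C_1\mathcal{F}(D)$ for $0\leq k\leq 3$. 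Each distance $|z_i-z_j|$ is $C^1$ with derivative bounded by $|\dot z_i|+|\dot z_j|\leq 2C_1\mathcal{F}(D)$. Since $D$ is the minimum of finitely many such functions, it is locally Lipschitz, and for a.e.\ $t$ its derivative coincides with the derivative of one of the distances. Therefore
\begin{align*}
|D(t_2)-D(t_1)|\leq 2C_1\int_{t_1}^{t_2}\mathcal{F}(D(s))\,ds\qquad (t_2\geq t_1\gg 1).
\end{align*}

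\emph{Step 2: choice of $T^{\prime}$.} Fix $c:=\min(1,1/C_1)$. By \eqref{F(D)lower}, $\mathcal{F}(D(s))\gtrsim 1/s$, so $t\mapsto\int_T^t\mathcal{F}(D(s))\,ds$ is continuous, increasing, vanishes at $t=T$, and tends to $\infty$. We may therefore define $T^{\prime}>T$ as the first time at which $\int_T^{T^{\prime}}\mathcal{F}(D(s))\,ds=cD^{\epsilon}(T)$.

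\emph{Step 3: verification.} For $T\leq t\leq T^{\prime}$, Step 1 gives
\begin{align*}
|D(t)-D(T)|\leq 2C_1 cD^{\epsilon}(T)\leq 2D^{\epsilon}(T),
\end{align*}
which is \eqref{TTprimekawaran}. The integral equals $cD^{\epsilon}(T)$ exactly, so \eqref{skeibunes41} holds with $c_{\bullet}=c/2$ (any $c_\bullet\le c$ with $c_\bullet<1$ works, since $c\le 1$). This constant depends only on $C_1$, hence on $\vec u$.

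The only delicate point is Step 1. One must check that $\mathcal{F}$ is monotone on the relevant range, so that every interaction term is dominated by $\mathcal{F}(D)$. One must also handle the nondifferentiability of the minimum $D$; this is done through the Lipschitz/a.e.\ argument rather than by differentiating $D$ directly. Everything else is immediate from the stopping-time choice.
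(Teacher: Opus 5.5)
Your proof is correct, but you normalize differently from the paper.

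\textbf{The paper's route.} The paper chooses $T'$ by $\int_T^{T'}\mathcal{F}(D(s))/D(s)\,ds=\frac1C D^{\epsilon-1}(T)$. It justifies this by the non-integrability of $\mathcal{F}(D)/D$, which uses both \eqref{F(D)lower} and the upper bound \eqref{Dupper}. It then integrates $|\dot D|/D\le C\mathcal{F}(D)/D$ to get $|\log D(t)-\log D(T)|\le D^{\epsilon-1}(T)$ and exponentiates to obtain \eqref{TTprimekawaran}. Only then does it use $\frac12 D(T)\le D(t)\le 2D(T)$ to convert the weighted integral into the two-sided bound \eqref{skeibunes41}.

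\textbf{Your route.} You stop directly on $\int_T^t\mathcal{F}(D)$ and use the additive bound $|D(t)-D(T)|\le 2C_1\int_T^t\mathcal{F}(D)$. Both items then follow at once, with no logarithm and no comparison step. You also need only the divergence of $\int^\infty\mathcal{F}(D)$, i.e.\ \eqref{F(D)lower} alone. Your Step 1 is also more careful than the paper's one-line appeal to Lemma \ref{daijisiki}. You justify bounding every interaction term by $\mathcal{F}(D)$ through the eventual monotonicity of $\mathcal{F}$. You also handle the non-differentiability of the minimum $D$ through the Lipschitz/a.e.\ argument, whereas the paper simply writes $|\dot D|\le C\mathcal{F}(D)$.

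\textbf{What the weighted normalization buys.} It would be natural if one also wanted to control the angle variables $c_{jk}$ on $[T,T']$, since their derivatives are $O(\mathcal{F}(D)/D)$. Your choice gives this as well: since $D(t)\sim D(T)$ on $[T,T']$, you get $\int_T^{T'}\mathcal{F}(D)/D\lesssim D^{\epsilon-1}(T)$. So nothing is lost by your simpler route.
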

\begin{proof}
Fix $\epsilon>0$. First, by Lemma \ref{daijisiki}, there exists $C>1$ depending on $\vec{u}$ such that for $t\geq T$
\begin{align}
|\dot{D}|\leq C\mathcal{F}(D). \label{Dosaeru}
%\\
%|\dot{c}_{12}|+|\dot{c}_{13}|+|\dot{c}_{23}|\leq \frac{C\mathcal{F}(D)}{D}. \label{cjkosaeru}
\end{align}
By Lemma \ref{limeplem}, the function $\frac{\mathcal{F}(D)}{D}$ is not in $L^1$, hence for any $T\gg 1$ there exists $T^{\prime}>T$ such that
\begin{align}\label{Tpridef}
\int_T^{T^{\prime}} \frac{\mathcal{F}(D(s))ds}{D(s)}=\frac{1}{C}D^{\epsilon-1}(T).
\end{align}
We show that, for this choice of $T^{\prime}$, \eqref{TTprimekawaran}, and \eqref{skeibunes41} hold.
%\begin{align}
%\left| c_{jk}(t)-c_{jk}(T)\right|\leq D^{-\epsilon}(T). \label{cjkes41} 
%\end{align}
%To prove \eqref{cjkes41}, fix $1\leq j<k\leq 3$ and take any $T\leq t\leq T^{\prime}$. Then by \eqref{cjkosaeru}, we have
%\begin{align*}
%|c_{jk}(t)-c_{jk}(T)|
%\leq \int_T^{t}|\dot{c}_{jk}(s)|ds
%\leq C\int_T^{T^{\prime}} \frac{\mathcal{F}(D(s))ds}{D(s)}
%= D^{-\epsilon}(T),
%\end{align*}
%which yields \eqref{cjkes41}.

First, we prove \eqref{TTprimekawaran}. For any $T\leq t\leq T^{\prime}$, we have
\begin{align}
\begin{aligned}
\left| \log{D(t)}-\log{D(T)}\right|
&=\left|\int_T^t \frac{\dot{D}(s)ds}{D(s)}\right|
\\
&\leq \int_T^t \frac{|\dot{D}(s)|ds}{D(s)}
\\
&\leq C\int_T^{T^{\prime}} \frac{\mathcal{F}(D(s))ds}{D(s)}
= D^{\epsilon-1}(T).
\end{aligned}
\end{align}
In particular, noting that $D\gg 1$, the above implies that for any $T\leq t\leq T^{\prime}$,
\begin{align*}
D(T)(1-2D^{\epsilon-1}(T))\leq e^{-D^{\epsilon-1}(T)}D(T),
\\
e^{-D^{\epsilon-1}(T)}D(T)\leq D(t)\leq e^{D^{\epsilon-1}(T)}D(T),
\\
e^{D^{\epsilon-1}(T)}D(T)\leq D(T)(1+2D^{\epsilon-1}(T)),
\end{align*}
which implies \eqref{TTprimekawaran}.

Next, we prove \eqref{skeibunes41}. By \eqref{TTprimekawaran}, we have for $T\leq t\leq T^{\prime}$
\begin{align*}
\frac{1}{2}D(T)\leq D(t)\leq 2D(T).
\end{align*}
Then by \eqref{Tpridef}, we have
\begin{align*}
D^{\epsilon-1}(T)&=C\int_T^{T^{\prime}} \frac{\mathcal{F}(D(s))ds}{D(s)}
\\
&\leq \frac{2C}{D(T)}\int_T^{T^{\prime}}\mathcal{F}(D(s))ds,
\end{align*}
and
\begin{align*}
D^{\epsilon-1}(T)&=C\int_T^{T^{\prime}} \frac{\mathcal{F}(D(s))ds}{D(s)}
\\
&\geq \frac{C}{2D(T)}\int_T^{T^{\prime}}\mathcal{F}(D(s))ds,
\end{align*}
which yields \eqref{skeibunes41} by choosing $c_{\bullet}=\min{(\frac{1}{2C},\frac{C}{2})}$. This completes the proof.
\end{proof}
%\begin{lemma}
%ある定数$c>0$が存在して,\ 以下が成立する: 任意の$T>0$に対して,\ ある$T^{\prime}>0$とが存在して,\ $1\leq j<k\leq 3$で
%\begin{align}
%\left|c_{jk}(T^{\prime})-C_{jk}(T)\right|\leq \frac{1}{200},
%\\
%\int_T^{T^{\prime}}\mathcal{F}(D(s))ds\geq c\max{(D(T^{\prime}),D(T))}.
%\end{align}
%\end{lemma}
Before proceeding to the analysis of distances, we record estimates on the angles of triangles with large side lengths.
\begin{lemma}\label{sankakukeilemma}
We assume $M\gg 1$. Then, $\varsigma_1,\varsigma_2,\varsigma_3\in\mathbb{R}^d$ satisfy the following.
\begin{enumerate}
\item When $\varsigma_1,\varsigma_2,\varsigma_3$ satisfy
\begin{align*}
\left|\ |\varsigma_1-\varsigma_2|-M\ \right|&\leq M^{\frac{99}{100}},
\\
\left|\ |\varsigma_1-\varsigma_3|-M\ \right|&\leq M^{\frac{99}{100}},
\\
|\varsigma_2-\varsigma_3|-M&\geq  -M^{\frac{99}{100}},
\end{align*}
we have
\begin{align*}
\frac{\varsigma_2-\varsigma_1}{|\varsigma_2-\varsigma_1|}\cdot \frac{\varsigma_3-\varsigma_1}{|\varsigma_3-\varsigma_1|}-\frac{1}{2}&\leq 5M^{-\frac{1}{100}},
\\
\frac{\varsigma_1-\varsigma_2}{|\varsigma_1-\varsigma_2|}\cdot \frac{\varsigma_3-\varsigma_2}{|\varsigma_3-\varsigma_2|}-\frac{1}{2}&\geq -5M^{-\frac{1}{100}},
\\
\frac{\varsigma_1-\varsigma_3}{|\varsigma_1-\varsigma_3|}\cdot \frac{\varsigma_2-\varsigma_3}{|\varsigma_2-\varsigma_3|}-\frac{1}{2}&\geq -5M^{-\frac{1}{100}}.
\end{align*}

\item When $\varsigma_1,\varsigma_2,\varsigma_3$ satisfy
\begin{align*}
\left|\ |\varsigma_1-\varsigma_2|-M\ \right|&\leq M^{\frac{99}{100}},
\\
|\varsigma_1-\varsigma_3|-M&\geq -M^{\frac{99}{100}},
\\
|\varsigma_2-\varsigma_3|-M&\geq -M^{\frac{99}{100}},
\end{align*}
we have
\begin{align*}
\frac{\varsigma_1-\varsigma_3}{|\varsigma_1-\varsigma_3|}\cdot \frac{\varsigma_2-\varsigma_3}{|\varsigma_2-\varsigma_3|}-\frac{1}{2}&\geq -5M^{-\frac{1}{100}}.
\end{align*}
\end{enumerate}
\end{lemma}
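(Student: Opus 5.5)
The whole lemma is elementary Euclidean geometry, so I would prove it by the law of cosines applied to the triangle with vertices $\varsigma_1,\varsigma_2,\varsigma_3$. Set $\delta=M^{\frac{99}{100}}$, so that $\delta/M=M^{-\frac{1}{100}}\ll 1$. Write $a=|\varsigma_1-\varsigma_2|$, $b=|\varsigma_1-\varsigma_3|$, $c=|\varsigma_2-\varsigma_3|$. All three are positive, since each is at least $M-\delta>0$. Each inner product in the statement is the cosine of an interior angle:
\begin{align*}
\frac{\varsigma_2-\varsigma_1}{a}\cdot\frac{\varsigma_3-\varsigma_1}{b}=\frac{a^2+b^2-c^2}{2ab},\quad
\frac{\varsigma_1-\varsigma_2}{a}\cdot\frac{\varsigma_3-\varsigma_2}{c}=\frac{a^2+c^2-b^2}{2ac},\quad
\frac{\varsigma_1-\varsigma_3}{b}\cdot\frac{\varsigma_2-\varsigma_3}{c}=\frac{b^2+c^2-a^2}{2bc}.
\end{align*}
Each claimed inequality then follows by bounding these rational expressions. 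In every case the error term comes out as $C\delta/M$ with $C<5$, up to lower-order corrections that are absorbed for $M\gg1$.

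\textbf{Part (1), angle at $\varsigma_1$.} Here $a,b\in[M-\delta,M+\delta]$ and $c\geq M-\delta$. Then
\begin{align*}
\frac{a^2+b^2-c^2}{2ab}\leq \frac{2(M+\delta)^2-(M-\delta)^2}{2(M-\delta)^2}=\frac12+\frac{4M\delta}{(M-\delta)^2},
\end{align*}
and the last term is at most $5\delta/M$ for $M$ large.

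\textbf{Part (1), angle at $\varsigma_2$.} This is the only step needing a little care, because $c$ has no upper bound from the hypotheses. I would fix $a,b$ and regard
\begin{align*}
h(c)=\frac{c}{2a}+\frac{a^2-b^2}{2ac}
\end{align*}
as a function of $c$. If $a^2\leq b^2$, then $h$ is increasing. Otherwise $h$ decreases only for $c\leq\sqrt{a^2-b^2}$. Since $|a^2-b^2|\leq 4M\delta+O(\delta^2)$, this threshold is $\lesssim (M\delta)^{1/2}\ll M-\delta$. So on the admissible range $c\geq M-\delta$, $h$ is increasing and its minimum is at $c=M-\delta$. There
\begin{align*}
h\geq \frac{M-\delta}{2(M+\delta)}-\frac{4M\delta+\delta^2}{2(M-\delta)^2}=\frac12-\frac{3\delta}{M}+O\!\left(\frac{\delta^2}{M^2}\right),
\end{align*}
which is at least $\frac12-5M^{-\frac1{100}}$.

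\textbf{Part (1), angle at $\varsigma_3$.} The hypotheses are symmetric under exchanging $\varsigma_2$ and $\varsigma_3$, so this case is identical with $b,c$ swapped.

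\textbf{Part (2).} Only $a\in[M-\delta,M+\delta]$ is two-sided, while $b,c\geq M-\delta$. I would use $b^2+c^2\geq 2bc$ to get
\begin{align*}
\frac{b^2+c^2-a^2}{2bc}\geq 1-\frac{a^2}{2bc}\geq 1-\frac{(M+\delta)^2}{2(M-\delta)^2}=\frac12-\frac{2\delta}{M}+O\!\left(\frac{\delta^2}{M^2}\right),
\end{align*}
which again gives the claim for $M\gg1$.

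The only real obstacle is the unbounded side $c$ in the angle at $\varsigma_2$ in part (1), and the monotonicity argument for $h$ handles it. Everything else is routine arithmetic with the factor $\delta/M=M^{-\frac1{100}}$.
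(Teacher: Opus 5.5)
Your proof is correct and is essentially the paper's argument: the law of cosines for each angle, with the same bounds as the paper at the angle at $\varsigma_1$ and in part (2). The one difference is at the angle at $\varsigma_2$. There the paper skips your monotonicity and threshold analysis by first bounding the numerator, $a^2+c^2-b^2\geq c^2+(M-\delta)^2-(M+\delta)^2$, which leaves $\frac{c}{2a}-\frac{4M\delta}{2ac}$; this is already increasing in the unbounded side $c$, so the bounds $c\geq M-\delta$ and $a\leq M+\delta$ can be inserted directly.
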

\begin{remark}
The lemma depends only on the configuration of three points and is therefore essentially two-dimensional. Thus, to gain geometric intuition, one may restrict attention to a planar triangle, in which the claim is most easily visualized.
\end{remark}
\begin{proof}
We define
\begin{align*}
\Lambda_1=|\varsigma_3-\varsigma_2|,\ \Lambda_2=|\varsigma_3-\varsigma_1|,\ \Lambda_3=|\varsigma_2-\varsigma_1|.
\end{align*}
%Furthermore, we define $\tau_1,\tau_2,\tau_3$ as 
%\begin{align*}
%\Lambda_k=M+\tau_k
%\end{align*}
%for $k=1,2,3$. 
First, we prove (1). By assumption, we have
\begin{align*}
\frac{\varsigma_2-\varsigma_1}{|\varsigma_2-\varsigma_1|}\cdot \frac{\varsigma_3-\varsigma_1}{|\varsigma_3-\varsigma_1|}&=\frac{\Lambda_2^2+\Lambda_3^2-\Lambda_1^2}{2\Lambda_2\Lambda_3}
\\
&\leq \frac{ 2(M+M^{\frac{99}{100}})^2-(M-M^{\frac{99}{100}})^2}{2(M-M^{\frac{99}{100}})^2}
\\
&\leq \frac{ M^2+7M^{\frac{199}{100}}}{2(M-M^{\frac{99}{100}})^2}
\\
&=\frac{ 1+\frac{7}{M^{\frac{1}{100}}}}{2\left(1-\frac{1}{M^{\frac{1}{100}}}\right)^2}
\\
&\leq \frac{1}{2}+\frac{5}{M^{\frac{1}{100}}}.
\end{align*}
Furthermore, we have
\begin{align*}
\frac{\varsigma_1-\varsigma_2}{|\varsigma_1-\varsigma_2|}\cdot \frac{\varsigma_3-\varsigma_2}{|\varsigma_3-\varsigma_2|}&=\frac{\Lambda_1^2+\Lambda_3^2-\Lambda_2^2}{2\Lambda_1\Lambda_3}
\\
&\geq \frac{ \Lambda_1^2+(M-M^{\frac{99}{100}})^2-(M+M^{\frac{99}{100}})^2}{2\Lambda_1\Lambda_3}
\\
&=\frac{\Lambda_1}{2\Lambda_3}-\frac{4M^{\frac{199}{100}}}{2\Lambda_1\Lambda_3}
\\
&\geq \frac{ M-M^{\frac{99}{100}}}{2(M+M^{\frac{99}{100}})}-\frac{4M^{\frac{199}{100}}}{2(M-M^{\frac{99}{100}})^2}
\\
&\geq \frac{1}{2}-\frac{5}{M^{\frac{1}{100}}}.
\end{align*}
By the same calculation, we obtain
\begin{align*}
\frac{\varsigma_1-\varsigma_3}{|\varsigma_1-\varsigma_3|}\cdot \frac{\varsigma_2-\varsigma_3}{|\varsigma_2-\varsigma_3|}\geq \frac{1}{2}-\frac{5}{M^{\frac{1}{100}}},
\end{align*}
and therefore we prove (1).

Next we prove (2). By the direct computation, we have
\begin{align*}
\frac{\varsigma_1-\varsigma_3}{|\varsigma_1-\varsigma_3|}\cdot \frac{\varsigma_2-\varsigma_3}{|\varsigma_2-\varsigma_3|}&=\frac{\Lambda_1^2+\Lambda_2^2-\Lambda_3^2}{2\Lambda_1\Lambda_2}
\\
&\geq \frac{2\Lambda_1\Lambda_2-(M+M^{\frac{99}{100}})^2}{2\Lambda_1\Lambda_2}
\\
&\geq 1-\frac{(M+M^{\frac{99}{100}})^2}{2(M-M^{\frac{99}{100}})^2}
\\
&= 1-\frac{(1+\frac{1}{M^{\frac{1}{100}}})^2}{2(1-\frac{1}{M^{\frac{1}{100}}})^2}
\\
&\geq \frac{1}{2}-\frac{5}{M^{\frac{1}{100}}}. 
\end{align*}
Therefore we prove (2).
\end{proof}

\subsection{Distance analysis I}

In Subsections 4.2-4.4, we show that, for a configuration of centers $z_0,z_1,z_2,z_3$, a $(1,3)$-sign solitary waves solution cannot occur in the regime where $\mathcal{F}(\tilde{D})$ exerts a strong influence on the center of each soliton. We begin with the case in which $z_1,z_2,z_3$ form an almost equilateral triangle (and where $\mathcal{F}(\tilde{D})$ is influential), and prove that such a configuration cannot produce a $(1,3)$-sign solitary waves. To this end, we first establish the following lemma.
\begin{lemma}\label{henchousystem}
Let $\vec{u}$ be a (1,3)-sign solitary waves.\ Furthermore, we assume that there exist $1\ll T_1<T_2\leq \infty$ such that we have for $T_1\leq t<T_2$
\begin{align}
\tilde{D}(t)-\hat{D}(t)<D^{\frac{9}{10}}(t),\label{tilhatD}
\\
\max{(\rho_{12}(t),\rho_{13}(t),\rho_{23}(t))}<\tilde{D}(t)+D^{\frac{9}{10}}(t).\label{maxtilD}
\end{align}
Then, we have for $T_1\leq t<T_2$
\begin{align}\label{osidasi1}
\begin{aligned}
&\quad \left(\hat{D}(t)-\frac{\rho_{12}(t)+\rho_{13}(t)+\rho_{23}(t)}{3}\right)-\left(\hat{D}(T_1)-\frac{\rho_{12}(T_1)+\rho_{13}(T_1)+\rho_{23}(T_1)}{3}\right)
\\
&\gtrsim \int_{T_1}^{t} \mathcal{F}(D(s))ds.
\end{aligned}
\end{align}
\end{lemma}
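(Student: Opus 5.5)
The plan is to differentiate the quantity
$$W(t)=\hat{D}(t)-\frac{\rho_{12}(t)+\rho_{13}(t)+\rho_{23}(t)}{3}$$
and show that $\dot W\gtrsim \mathcal{F}(D)$ on $[T_1,T_2)$ in the sense of lower Dini derivatives. Integrating then gives \eqref{osidasi1}. Since $\hat D$ is a minimum of three $C^1$ functions, at each time I fix an index $m$ with $\rho_m=\hat D$ and bound $\dot\rho_m$ from below. The lower Dini derivative of $\hat D$ is at least the minimum of $\dot\rho_m$ over the active indices, which is enough for the integration.

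\emph{Step 1 (geometry).} By \eqref{maxtilD}, the three like-signed distances $\rho_{12},\rho_{13},\rho_{23}$ all lie in $[\tilde D,\tilde D+D^{9/10}]$. By \eqref{tilhatD} and \eqref{repcon43}, $\hat D$ lies in $[\tilde D-D^{9/10},\tilde D+C_\circ]$, so in particular $D=\min(\tilde D,\hat D)$ differs from $\tilde D$ by at most $D^{9/10}$. Applying Lemma \ref{sankakukeilemma}(1) to $z_1,z_2,z_3$ with $M=\tilde D$, and then to the other vertex orderings, all angles of the triangle $z_1z_2z_3$ are $60^\circ+O(D^{-1/100})$. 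Hence $u_{jk}\cdot u_{jl}=\frac12+O(D^{-1/100})$ for distinct $j,k,l$.

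\emph{Step 2 (the average of the like-signed distances).} I will sum the three equations in \eqref{rhojkes1}. In the sum, each $\mathcal{F}(\rho_{jk})$ carries the coefficient $-2$ minus the cosines of the two angles of the triangle at $z_j$ and $z_k$, which is $-3+O(D^{-1/100})$. The opposite-sign terms regroup as $\sum_k \mathcal{F}(\rho_k)\,u_k\cdot w_k$, where $w_k=u_{jk}+u_{lk}$ is the sum of the unit vectors pointing from the other two like-signed centers toward $z_k$. Since the triangle is nearly equilateral, $|w_k|\le \sqrt3+O(D^{-1/100})$. This gives
$$-\frac{d}{dt}\frac{\rho_{12}+\rho_{13}+\rho_{23}}{3}\ \ge\ \bigl(1-O(D^{-1/100})\bigr)\sum_{j<k}\mathcal{F}(\rho_{jk})-\frac13\sum_k\mathcal{F}(\rho_k)\,u_k\cdot w_k+o(\mathcal{F}(D)).$$

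\emph{Step 3 (the nearest oppositely signed distance).} For the active index $m$, \eqref{rhoes1} gives
$$\dot\rho_m=2\mathcal{F}(\rho_m)+\sum_{j\neq m}\mathcal{F}(\rho_j)c_{mj}+\mathcal{F}(\rho_{mj})\frac{\rho_jc_{mj}-\rho_m}{\rho_{mj}}+\cdots,$$
where I have used $u_{mj}\cdot u_m=(\rho_jc_{mj}-\rho_m)/\rho_{mj}$. The like-signed terms here are bounded below by $-\frac{\rho_j+\rho_m}{\rho_{mj}}\mathcal{F}(\rho_{mj})$. By Step 1 each such term is at worst a bounded multiple of $\mathcal{F}(\tilde D)$, and that multiple will be absorbed by the gain $\sum\mathcal{F}(\rho_{jk})$ from Step 2.

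\emph{Step 4 (closing the estimate; main obstacle).} It remains to show that the total is $\gtrsim\mathcal{F}(D)$. I will first absorb the $o(\mathcal{F}(D))$ errors from Lemma \ref{daijisiki}. For the like-signed contributions, note that $\mathcal{F}(\rho_{jk})\ge e^{-D^{9/10}}\mathcal{F}(\tilde D)$ is not comparable across pairs, so the absorption must be done pair by pair and not via a uniform comparison. The main obstacle is the opposite-sign terms. The distances $\rho_k$ may differ by up to $D^{9/10}$, so the $\mathcal{F}(\rho_k)$ are not comparable either, and the cosines $c_{mj}$ are a priori uncontrolled. There are two competing contributions: the terms $\mathcal{F}(\rho_j)c_{mj}$ (possibly negative) and $-\frac13\mathcal{F}(\rho_k)u_k\cdot w_k$. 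My first attempt will be to split according to whether $z_0$ lies near the centroid of the triangle, i.e. whether $|c_{jk}+\frac12|$ is small, and to use \eqref{repcon41} in each case. The inequality \eqref{repcon41} says $\sum_k\mathcal{F}(\rho_k)\ge\sum\mathcal{F}(\rho_{jk})-\frac1{100}\mathcal{F}(D)$, which forces the repulsive mass at the nearest oppositely signed distance to be at least a fixed fraction of $\mathcal{F}(D)$, as in \eqref{repcon42.5}. Off the centroid, $z_0$ is closest to some $z_m$, and the dominant term $2\mathcal{F}(\rho_m)$ beats the others, which sit at distances larger by $\gg1$ and are therefore exponentially smaller. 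Near the centroid, I will use $c_{mj}\approx-\frac12$ and $u_k\cdot w_k\approx-\sqrt3$ together with Step 1 to get a net positive coefficient on $\sum\mathcal{F}(\rho_k)$. In both cases the result should be $\dot W\gtrsim \mathcal{F}(\hat D)+\mathcal{F}(\tilde D)\gtrsim\mathcal{F}(D)$.
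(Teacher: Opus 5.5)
Your framework matches the paper's. You differentiate $\hat D-\frac13(\rho_{12}+\rho_{13}+\rho_{23})$ at an active index $m$, then bound $\dot\rho_m$ from below and the average of the $\dot\rho_{jk}$ from above. Steps 1--2 are correct, and Step 2 is even slightly sharper than \eqref{avees1}.

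The gap is in Step 4. There you treat the cosines $c_{mj}$ as uncontrolled and propose a dichotomy that does not match the geometry of this regime. Under \eqref{tilhatD}--\eqref{maxtilD}, every side of the triangle $z_1z_2z_3$ equals $D+O(D^{9/10})$, while $\rho_k\ge\hat D\ge D$ for all $k$. Hence Lemma \ref{sankakukeilemma}(2), applied to $(\varsigma_1,\varsigma_2,\varsigma_3)=(z_j,z_k,z_0)$ with $M=D$, gives $c_{jk}\ge\frac12-5D^{-1/100}$ for \emph{every} pair. In words, $z_0$ sees each edge of the like-signed triangle under an angle of at most about $60^\circ$.

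This breaks both of your cases:
\begin{itemize}
\item The ``near the centroid'' case ($c_{mj}\approx-\frac12$) never occurs, since the centroid is at distance $\approx D/\sqrt3<D$ from the vertices.
\item The ``off the centroid'' case relies on the two other $\rho_j$ exceeding $\rho_m$ by $\gg1$, which is false in general. For $d\ge3$, the regular-tetrahedron configuration has $\rho_1=\rho_2=\rho_3$ and all $c_{jk}=\frac12$. In the plane, taking $z_0$ to be the reflection of $z_3$ across the line $z_1z_2$ gives $\rho_1=\rho_2$ and $c_{12}=\frac12$.
\end{itemize}
So Step 4 as proposed does not handle the configurations that actually arise.

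The missing input is exactly this angle bound, and with it no case split is needed.
\begin{itemize}
\item Lemma \ref{sankakukeilemma}(1), applied to $(z_m,z_0,z_j)$, gives $u_m\cdot u_{mj}\ge-\frac12-5D^{-1/100}$. Hence $\dot\rho_m\ge2\mathcal{F}(\rho_m)+\frac12\sum_{j\neq m}\mathcal{F}(\rho_j)-\frac12\sum_{j\neq m}\mathcal{F}(\rho_{mj})+o(\mathcal{F}(D))$.
\item Combine this with the paper's cruder average bound \eqref{avees1}, which only uses $u_k\cdot w_k\le2$, and with $\mathcal{F}(\rho_j)\le\mathcal{F}(\rho_m)$. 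This gives $\dot\rho_m-\frac13\sum_{j<k}\dot\rho_{jk}\ge\frac43\mathcal{F}(\rho_m)-\frac16\sum_{j\neq m}\mathcal{F}(\rho_j)+o(\mathcal{F}(D))\ge\mathcal{F}(\hat D)+o(\mathcal{F}(D))$.
\item By \eqref{repcon42.5}, the right-hand side is $\gtrsim\mathcal{F}(D)$. With your sharper bound $|w_k|\le\sqrt3$, the constant improves to $3-\sqrt3$.
\end{itemize}

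Your Step 3 bound also cannot be absorbed as you claim. When $\rho_j\approx\rho_m\approx\rho_{mj}$, the factor $\frac{\rho_j+\rho_m}{\rho_{mj}}$ is about $2$, whereas Step 2 gains only the coefficient $1-O(D^{-1/100})$ on each $\mathcal{F}(\rho_{mj})$. You need at least the trivial bound $|u_{mj}\cdot u_m|\le1$, which gives exact cancellation up to $o(\mathcal{F}(D))$. The bound $u_m\cdot u_{mj}\ge-\frac12-5D^{-1/100}$ above gives genuine room.
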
 
%\begin{remark}
%このセクションでの議論で重要なのは,\ 三角形の三辺の長さと角度の関係から内積の不等式評価を行う点である.\ 一般次元であっても3点をとると平面上の三角形とみなすことができ,\ その角度に関しては余弦定理から考えられるが,\ ここででてくる$\cos{\theta}$に対応するのが内積になるので,\ このセクションの議論は初等幾何の考え方を忘れないようにすることが重要である.
%\end{remark}
\begin{proof}
First, by \eqref{repcon43} and \eqref{tilhatD}, we have for $T_1\leq t<T_2$
\begin{align}\label{tilDes0211}
\left| \tilde{D}(t)-D(t)\right|+\left|\hat{D}(t)-D(t)\right|<D^{\frac{19}{20}}(t).
\end{align}
Since $\rho_{12},\ \rho_{13},\ \rho_{23}$ satisfy \eqref{maxtilD}, by Lemma \ref{sankakukeilemma} and \eqref{tilDes0211}, we have
\begin{align*}
\min{(u_{12}\cdot u_{13},u_{21}\cdot u_{23},u_{31}\cdot u_{32})}-\frac{1}{2}\geq -10D^{-\frac{1}{100}}.
\end{align*}
Furthermore, by \eqref{tilDes0211} and $\rho_1,\rho_2,\rho_3\geq \hat{D}$, we have
\begin{align*}
\min{(\rho_1,\rho_2,\rho_3)}-D\geq -D^{\frac{99}{100}}.
\end{align*}
Therefore, by Lemma \ref{sankakukeilemma}, $c_{12},\ c_{13},\ c_{23}$ satisfy
\begin{align}\label{cdot0}
\min{(c_{12},c_{13},c_{23})}-\frac{1}{2}\geq -10D^{-\frac{1}{100}}.
\end{align}
By \eqref{rhojkes1}, we obtain
\begin{align}\label{avees1}
\begin{aligned}
\frac{\dot{\rho}_{12}+\dot{\rho}_{13}+\dot{\rho}_{23}}{3}&\leq -\left(\mathcal{F}(\rho_{12})+\mathcal{F}(\rho_{13})+\mathcal{F}(\rho_{23})\right)
\\
&\quad+\frac{2}{3}\left(\mathcal{F}(\rho_1)+\mathcal{F}(\rho_2)+\mathcal{F}(\rho_3)\right)+o(\mathcal{F}(D)).
\end{aligned}
\end{align}
Here, we assume $\rho_1=\hat{D}$. Then, by \eqref{maxtilD} and \eqref{tilDes0211}, we have 
\begin{align*}
|\rho_1-D|&\leq D^{\frac{99}{100}},
\\
|\rho_{12}-D|&\leq D^{\frac{99}{100}},
\\
\rho_2-D&\geq -D^{\frac{99}{100}}.
\end{align*}
Therefore, by Lemma \ref{sankakukeilemma}, we have
\begin{align}\label{cdot1}
u_1\cdot u_{12}+\frac{1}{2}\geq -5D^{-\frac{1}{100}}.
\end{align}
By the same argument, we have
\begin{align}\label{cdot2}
u_1\cdot u_{13}+\frac{1}{2}\geq -5D^{-\frac{1}{100}}.
\end{align}
By \eqref{rhoes1},\ \eqref{cdot0}, \eqref{cdot1}, and \eqref{cdot2}, we obtain
\begin{align}\label{rho1mines1}
\dot{\rho}_1&\geq 2\mathcal{F}(\rho_1)+\frac{1}{2}\left(\mathcal{F}(\rho_2)+\mathcal{F}(\rho_3)\right)-\frac{1}{2}\left(\mathcal{F}(\rho_{12})+\mathcal{F}(\rho_{13})\right)+o\left(\mathcal{F}(D)\right).
\end{align}
By \eqref{avees1} and \eqref{rho1mines1}, we have for $t\gg 1$
\begin{align*}
\dot{\rho}_1-\frac{\dot{\rho}_{12}+\dot{\rho}_{13}+\dot{\rho}_{23}}{3}&\geq \frac{4}{3}\mathcal{F}(\rho_1)-\frac{1}{6}\left(\mathcal{F}(\rho_2)+\mathcal{F}(\rho_3)\right)+o\left(\mathcal{F}(D)\right)
\\
&\gtrsim \mathcal{F}(D).
\end{align*}
When $\rho_2=\hat{D}$ or $\rho_3=\hat{D}$ hold, by the same argument, we obtain
\begin{align}\label{131es1}
\dot{\hat{D}}-\frac{\dot{\rho}_{12}+\dot{\rho}_{13}+\dot{\rho}_{23}}{3}\gtrsim \mathcal{F}(D).
\end{align}
Integrating \eqref{131es1} from $T_1$ to $t$ yields \eqref{osidasi1} and completes the proof.
\end{proof}
Using this lemma, we show that if $\vec{u}$ is a $(1,3)$-sign solitary waves, then at least one of $\rho_{12}, \rho_{13}, \rho_{23}$ must be sufficiently larger than $D$.
\begin{lemma}\label{hitotuhanagai}
Let $\vec{u}$ be a $(1,3)$-sign solitary waves. Furthermore, we assume for $T_1\leq t\leq T_2$
\begin{align}\label{estimate0124}
\tilde{D}(t)-\hat{D}(t)<D^{\frac{1}{2}}(t).
\end{align}
Then, we have for $T_1\leq t\leq T_2$
\begin{align*}
\max{(\rho_{12},\rho_{13},\rho_{23})}\geq \tilde{D}+D^{\frac{2}{3}}.
\end{align*}
\end{lemma}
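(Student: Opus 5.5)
The plan is to argue by contradiction, using Lemma \ref{henchousystem} as the engine and Lemma \ref{henchoumod} to produce a time window of controlled length. Suppose there is $t_0\in[T_1,T_2]$ (with $t_0\gg1$) at which
\begin{align*}
\max(\rho_{12},\rho_{13},\rho_{23})(t_0)<\tilde{D}(t_0)+D^{\frac{2}{3}}(t_0).
\end{align*}
By \eqref{estimate0124} we also have $\tilde{D}(t_0)-\hat{D}(t_0)<D^{\frac12}(t_0)$. Combining these gives a lower bound on the quantity that Lemma \ref{henchousystem} controls:
\begin{align*}
\hat{D}(t_0)-\frac{\rho_{12}+\rho_{13}+\rho_{23}}{3}(t_0)\geq \hat{D}(t_0)-\tilde{D}(t_0)-D^{\frac23}(t_0)\geq -2D^{\frac23}(t_0).
\end{align*}
On the other hand, the average of the $\rho_{jk}$ is at least $\tilde{D}$. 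Hence, by \eqref{repcon43}, at every time
\begin{align*}
\hat{D}-\frac{\rho_{12}+\rho_{13}+\rho_{23}}{3}\leq \hat{D}-\tilde{D}< C_{\circ}.
\end{align*}

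Next, I build a window after $t_0$ on which the hypotheses \eqref{tilhatD} and \eqref{maxtilD} of Lemma \ref{henchousystem} survive. I apply Lemma \ref{henchoumod} with $T=t_0$ and $\epsilon=\frac45$. By continuity of the integral, I shrink $T'$ to the first time $T''$ at which
\begin{align*}
\int_{t_0}^{T''}\mathcal{F}(D(s))\,ds=c_{\bullet}D^{\frac45}(t_0).
\end{align*}
On $[t_0,T'']$, \eqref{TTprimekawaran} gives $|D(t)-D(t_0)|\leq 2D^{\frac45}(t_0)$, so $D(t)\sim D(t_0)$. Moreover, every $\rho_j$ and $\rho_{jk}$ has derivative $O(\mathcal{F}(D))$ by Lemma \ref{daijisiki}. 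Therefore each of them, and hence $\hat{D}$, $\tilde{D}$ and $\max\rho_{jk}$, moves by at most $C D^{\frac45}(t_0)$ on this window. Consequently, on $[t_0,T'']$,
\begin{align*}
\tilde{D}-\hat{D}\leq D^{\frac12}+CD^{\frac45}<D^{\frac{9}{10}},\qquad
\max\rho_{jk}-\tilde{D}\leq D^{\frac23}+CD^{\frac45}<D^{\frac{9}{10}},
\end{align*}
because $D\gg1$. Thus Lemma \ref{henchousystem} applies on $[t_0,T'']$.

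Lemma \ref{henchousystem} now gives
\begin{align*}
\Bigl(\hat{D}-\frac{\rho_{12}+\rho_{13}+\rho_{23}}{3}\Bigr)(T'')\geq -2D^{\frac23}(t_0)+c\,c_{\bullet}D^{\frac45}(t_0).
\end{align*}
For $t_0\gg1$ the right-hand side exceeds $C_{\circ}$, which contradicts the upper bound $\hat{D}-\tilde{D}<C_{\circ}$ from \eqref{repcon43}. Hence no such $t_0$ exists, and the claimed lower bound on $\max(\rho_{12},\rho_{13},\rho_{23})$ holds throughout $[T_1,T_2]$.

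The main obstacle is balancing the exponent $\epsilon$. It must exceed $\frac23$, so that the accumulated push of order $D^{\epsilon}$ beats the initial deficit of order $D^{2/3}$. It must also stay below $\frac{9}{10}$, so that the drift of the distances, which is of order $D^{\epsilon}$, does not break the hypotheses \eqref{tilhatD} and \eqref{maxtilD}. The choice $\epsilon=\frac45$ satisfies both. A secondary point is that the window may extend beyond $T_2$; this is harmless, because \eqref{estimate0124} is needed only at $t_0$, and on the window the hypotheses are verified directly.
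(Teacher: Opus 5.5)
Your proof is correct and follows the paper's argument. You argue by contradiction at a time $t_0$, take a window from Lemma \ref{henchoumod} on which \eqref{tilhatD} and \eqref{maxtilD} persist, and use Lemma \ref{henchousystem} to push $\hat{D}-\frac{\rho_{12}+\rho_{13}+\rho_{23}}{3}$ past the bound $C_{\circ}$ coming from \eqref{repcon43}. The differences are cosmetic: the paper takes $\epsilon=\frac{7}{10}$ rather than $\frac45$, and it adds an auxiliary bootstrap on $\hat{D}-\frac{\rho_{12}+\rho_{13}+\rho_{23}}{3}+D^{\frac23}\geq 0$, which your direct application of Lemma \ref{henchousystem} shows is not needed.
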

\begin{proof}
We assume that there exists $T_1\leq t^{\prime}\leq T_2$ such that 
\begin{align}\label{DA11}
\max{(\rho_{12}(t^{\prime}),\rho_{13}(t^{\prime}),\rho_{23}(t^{\prime}))}<\tilde{D}(t^{\prime})+D^{\frac{2}{3}}(t^{\prime}).
\end{align} 
Then, we introduce the following bootstrap estimate
\begin{align}\label{bsass}
\hat{D}(t)-\frac{\rho_{12}(t)+\rho_{13}(t)+\rho_{23}(t)}{3}+D^{\frac{2}{3}}(t)\geq 0.
\end{align}
By Lemma \ref{henchoumod}, there exists $T_3$ such that we have for $t^{\prime}\leq t\leq T_3$
\begin{align}\label{DA12}
|D(t)-D(t^{\prime})|\leq 2D^{\frac{7}{10}}(t^{\prime}),
\end{align}
and we have
\begin{align}\label{DA13}
c_{\bullet} D^{\frac{7}{10}}(t^{\prime})\leq \int_{t^{\prime}}^{T_3}\mathcal{F}(D(s))ds\leq \frac{1}{c_{\bullet}} D^{\frac{7}{10}}(t^{\prime}).
\end{align}
Furthermore, by Lemma \ref{daijisiki}, we have
\begin{align}\label{DA0211}
\begin{aligned}
\left|\max{(\rho_{12}(t),\rho_{13}(t),\rho_{23}(t))}-\max{(\rho_{12}(t^{\prime}),\rho_{13}(t^{\prime}),\rho_{23}(t^{\prime}))}\right|&\lesssim \int_{t^{\prime}}^{t}\mathcal{F}(D(s))ds,
\\
\left|\tilde{D}(t)-\tilde{D}(t^{\prime})\right|&\lesssim\int_{t^{\prime}}^{t}\mathcal{F}(D(s))ds.
\end{aligned}
\end{align}
By \eqref{repcon43}, \eqref{DA12}, \eqref{DA13}, and \eqref{DA0211}, we have for $t^{\prime}\leq t\leq T_3$
\begin{align*}
\tilde{D}(t)-\hat{D}(t)&\lesssim \tilde{D}(t^{\prime})-\hat{D}(t^{\prime})+D^{\frac{7}{10}}(t^{\prime})
\\
&\lesssim D^{\frac{7}{10}}(t^{\prime}),
\end{align*}
and
\begin{align*}
&\quad \max{(\rho_{12}(t),\rho_{13}(t),\rho_{23}(t))}-\tilde{D}(t)
\\
&\lesssim \max{(\rho_{12}(t^{\prime}),\rho_{13}(t^{\prime}),\rho_{23}(t^{\prime}))}-\tilde{D}(t^{\prime})+D^{\frac{7}{10}}(t^{\prime}).
\\
&\lesssim D^{\frac{7}{10}}(t^{\prime}).
\end{align*}
In particular, by \eqref{DA12} and $D\gg 1$, we have for $t^{\prime}\leq t\leq T_3$
\begin{align}\label{Lemmatukaeru1}
\begin{aligned}
\tilde{D}(t)-\hat{D}(t)<D^{\frac{9}{10}}(t),
\\
\max{(\rho_{12}(t),\rho_{13}(t),\rho_{23}(t))}<\tilde{D}(t)+D^{\frac{9}{10}}(t).
\end{aligned}
\end{align}
Furthermore, since we have
\begin{align*}
&\quad \hat{D}(t^{\prime})-\frac{\rho_{12}(t^{\prime})+\rho_{13}(t^{\prime})+\rho_{23}(t^{\prime})}{3}+D^{\frac{2}{3}}(t^{\prime})
\\
&\geq \tilde{D}(t^{\prime})-D^{\frac{1}{2}}(t^{\prime})-\frac{\rho_{12}(t^{\prime})+\rho_{13}(t^{\prime})+\rho_{23}(t^{\prime})}{3}+D^{\frac{2}{3}}(t^{\prime})
\\
&\geq \frac{D^{\frac{2}{3}}(t^{\prime})}{3}-D^{\frac{1}{2}}(t^{\prime})> 0,
\end{align*}
we define $T_4\in [t^{\prime},T_3]$ by
\begin{align}\label{bs1.sono1}
T_4=\sup{\{t\in[t^{\prime},T_3]\ \mbox{such that \eqref{bsass} holds on} [t^{\prime},t]\}}.
\end{align}
We assume $T_4<T_3$. Then, we have 
\begin{align*}
\hat{D}(T_4)-\frac{\rho_{12}(T_4)+\rho_{13}(T_4)+\rho_{23}(T_4)}{3}+D^{\frac{2}{3}}(T_4)=0.
\end{align*}
Then, by \eqref{Lemmatukaeru1}, we may apply Lemma \ref{henchousystem}. Therefore, by \eqref{131es1} and $|\frac{d}{dt}D^{\frac{2}{3}}|\lesssim D^{-\frac{1}{3}}\mathcal{F}(D)$, we obtain
\begin{align*}
\frac{d}{dt}\left(\hat{D}-\frac{\rho_{12}+\rho_{23}+\rho_{13}}{3}+D^{\frac{2}{3}}\right)(T_4)>0.
\end{align*}
Therefore, we have $T_4=T_3$. Then, by Lemma \ref{henchousystem} and \eqref{DA13}, there exists $c>0$ such that 
\begin{align}\label{DA14}
\begin{aligned}
&\quad \left(\hat{D}(T_3)-\frac{\rho_{12}(T_3)+\rho_{13}(T_3)+\rho_{23}(T_3)}{3}\right)-\left(\hat{D}(t^{\prime})-\frac{\rho_{12}(t^{\prime})+\rho_{13}(t^{\prime})+\rho_{23}(t^{\prime})}{3}\right)
\\
&\geq c\int_{t^{\prime}}^{T_3}\mathcal{F}(D(s))ds
\\
&\geq cc_{\bullet}D^{\frac{7}{10}}(t^{\prime}).
\end{aligned}
\end{align}
On the other hand, since we have \eqref{estimate0124}, \eqref{DA14},
\begin{align*}
\frac{\rho_{12}(T_3)+\rho_{13}(T_3)+\rho_{23}(T_3)}{3}&\geq \tilde{D}(T_3),
\end{align*}
and
\begin{align*}
\frac{\rho_{12}(t^{\prime})+\rho_{13}(t^{\prime})+\rho_{23}(t^{\prime})}{3}&<\tilde{D}(t^{\prime})+D^{\frac{2}{3}}(t^{\prime}),
\end{align*}
we obtain
\begin{align*}
&\quad \hat{D}(T_3)-\tilde{D}(T_3)+D^{\frac{1}{2}}(t^{\prime})
\\
&\geq \hat{D}(T_3)-\tilde{D}(T_3)+\left( \tilde{D}(t^{\prime})-\hat{D}(t^{\prime})\right)
\\
&\geq \left(\hat{D}(T_3)-\frac{\rho_{12}(T_3)+\rho_{13}(T_3)+\rho_{23}(T_3)}{3}\right)-\left(\hat{D}(t^{\prime})-\frac{\rho_{12}(t^{\prime})+\rho_{13}(t^{\prime})+\rho_{23}(t^{\prime})}{3}\right)
\\
&\quad -D^{\frac{2}{3}}(t^{\prime})
\\
&\geq cc_{\bullet}D^{\frac{7}{10}}(t^{\prime})-D^{\frac{2}{3}}(t^{\prime}).
\end{align*}
Therefore, we obtain
\begin{align*}
\tilde{D}(T_3)-\hat{D}(T_3)\leq -cc_{\bullet}D^{\frac{7}{10}}(t^{\prime})+D^{\frac{2}{3}}(t^{\prime})+D^{\frac{1}{2}}(t^{\prime}),
\end{align*}
which contradicts \eqref{repcon43}. Therefore we complete the proof.

\end{proof}

\subsection{Distance analysis II}

In the previous subsection, we showed that at least one of $\rho_{12},\ \rho_{23},\ \rho_{13}$ stays away from $D$, so that the corresponding inter-soliton interaction has essentially no effect.
Next, we show that the interaction associated with the \emph{second} farthest distance among these three is also negligible.

\begin{lemma}\label{henchoumid2}
Let $\vec{u}$ be a (1,3)-sign solitary waves . Furthermore, we assume that there exist $1\ll T_1<T_2\leq \infty$ such that we have for $T_1\leq t<T_2$
\begin{align}
\tilde{D}(t)-\hat{D}(t)<D^{\frac{1}{2}}(t), \label{lemass01251}
\\
\m{(\rho_{12}(t),\rho_{13}(t),\rho_{23}(t))}<\tilde{D}(t)+D^{\frac{1}{2}}(t). \label{lemass01252}
\end{align}
Then, we have for $T_1\leq t<T_2$
\begin{align}
\begin{aligned}
&\quad \left(\hat{D}(t)-\frac{\min{(\rho_{12}(t),\rho_{13}(t),\rho_{23}(t))}+\m{(\rho_{12}(t),\rho_{13}(t),\rho_{23}(t))}}{2}\right)
\\
&-\left(\hat{D}(T_1)-\frac{\min{(\rho_{12}(T_1),\rho_{13}(T_1),\rho_{23}(T_1))}+\m{(\rho_{12}(T_1),\rho_{13}(T_1),\rho_{23}(T_1))}}{2}\right)
\\
&\gtrsim \int_{T_1}^t \mathcal{F}(D(s))ds.
\end{aligned}
\end{align}
\end{lemma}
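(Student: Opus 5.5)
The plan is to follow the proof of Lemma \ref{henchousystem}: establish the pointwise differential inequality
\begin{align*}
\frac{d}{dt}\left(\hat{D}-\frac{\min{(\rho_{12},\rho_{13},\rho_{23})}+\m{(\rho_{12},\rho_{13},\rho_{23})}}{2}\right)\gtrsim \mathcal{F}(D)
\end{align*}
for a.e. $T_1\leq t<T_2$, and then integrate it. The functions $\hat{D}$, $\min$ and $\m$ are only Lipschitz, so I would work at a fixed time and use one-sided derivatives, or the derivative of whichever $\rho_k$, $\rho_{jk}$ realizes the extremum. At a fixed time I relabel so that $\rho_{12}\leq\rho_{13}\leq\rho_{23}$; the two shortest like-signed sides then share the vertex $z_1$, and $\rho_{23}$ is the longest side. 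The hypotheses \eqref{lemass01251}, \eqref{lemass01252} and \eqref{repcon43} give $|\rho_{12}-D|+|\rho_{13}-D|+|\hat{D}-D|\lesssim D^{\frac12}$, together with $\rho_{23}\geq D-D^{\frac12}$ and $\rho_k\geq D-D^{\frac12}$ for $k=1,2,3$. No upper bound on $\rho_{23}$ is available, and this is the new feature compared with Lemma \ref{henchousystem}.

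\emph{Step 1: angle information.} Lemma \ref{sankakukeilemma}(2), applied with $M=D$ to the triangles $(z_0,z_1,z_2)$ and $(z_0,z_1,z_3)$, shows that the angles at $z_0$ are at most about $60^\circ$, up to $O(D^{-\frac1{100}})$. If $\hat{D}=\rho_1$, the same lemma gives $u_1\cdot u_{12}+\frac12\geq -5D^{-\frac1{100}}$ and $u_1\cdot u_{13}+\frac12\geq -5D^{-\frac1{100}}$, exactly as in \eqref{cdot1}--\eqref{cdot2}. In the like-signed triangle, the law of cosines with $\rho_{12}\leq\rho_{13}\leq\rho_{23}$ gives $u_{21}\cdot u_{23}\geq 0$ and $u_{31}\cdot u_{32}\geq 0$. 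The angle at $z_1$ is not controlled: $u_{12}\cdot u_{13}$ may be close to $-1$.

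\emph{Step 2: upper bound for $\frac{\dot{\rho}_{12}+\dot{\rho}_{13}}{2}$.} Insert the angle information into \eqref{rhojkes1}. The terms $-\mathcal{F}(\rho_{23})(u_{21}\cdot u_{23})$ and $-\mathcal{F}(\rho_{23})(u_{31}\cdot u_{32})$ are $\leq 0$ and can be dropped. The cross terms give $-\frac12(\mathcal{F}(\rho_{12})+\mathcal{F}(\rho_{13}))(u_{12}\cdot u_{13})\leq \frac12(\mathcal{F}(\rho_{12})+\mathcal{F}(\rho_{13}))$. Together with the $-2\mathcal{F}$ terms this leaves
\begin{align*}
\frac{\dot{\rho}_{12}+\dot{\rho}_{13}}{2}\leq -\frac12\left(\mathcal{F}(\rho_{12})+\mathcal{F}(\rho_{13})\right)+\frac12\Bigl(\text{opposite-sign terms}\Bigr)+o(\mathcal{F}(D)).
\end{align*}
The opposite-sign terms are $\mathcal{F}(\rho_k)$ multiplied by cosines of base angles in the triangles $(z_0,z_j,z_k)$. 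I would bound them by $\mathcal{F}(\rho_1)$ times $\frac12$ plus a small error, using that the vertex angle at $z_0$ is at most $60^\circ$ so the base angles sum to at least $120^\circ$. I would also keep the exact sum $\mathcal{F}(\rho_2)(u_{12}\cdot u_2)+\mathcal{F}(\rho_1)(u_1\cdot u_{21})$, whose worst case is controlled by $\max(\mathcal{F}(\rho_1),\mathcal{F}(\rho_2))$.

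\emph{Step 3: lower bound for $\dot{\hat{D}}$.} Split into cases according to which $\rho_k$ equals $\hat{D}$.

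If $\hat{D}=\rho_1$, use \eqref{rhoes1} with $c_{12},c_{13}\geq \frac12-O(D^{-\frac1{100}})$ and the bounds on $u_1\cdot u_{12}$, $u_1\cdot u_{13}$. This gives $\dot{\rho}_1\geq 2\mathcal{F}(\rho_1)+\frac12(\mathcal{F}(\rho_2)+\mathcal{F}(\rho_3))-\frac12(\mathcal{F}(\rho_{12})+\mathcal{F}(\rho_{13}))+o(\mathcal{F}(D))$. Subtracting the bound from Step 2, the like-signed terms $\mathcal{F}(\rho_{12})$, $\mathcal{F}(\rho_{13})$ cancel exactly, which is why the weights $\frac12$ match. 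The remainder is $\geq \frac32\mathcal{F}(\rho_1)-(\text{small multiples of }\mathcal{F}(\rho_2),\mathcal{F}(\rho_3))$. Since $\rho_1=\hat{D}$ is minimal, $\mathcal{F}(\rho_1)$ dominates $\mathcal{F}(\rho_2)$ and $\mathcal{F}(\rho_3)$, and \eqref{repcon42.5} gives $\mathcal{F}(\hat{D})\gtrsim\mathcal{F}(D)$.

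If $\hat{D}=\rho_2$ or $\hat{D}=\rho_3$, the vertex $z_1$ no longer plays the distinguished role and $\dot{\rho}_2$ contains $\mathcal{F}(\rho_{23})u_{23}\cdot u_2$. Here I would use that $\rho_{23}\geq\rho_{13}\geq\rho_{12}$, so $\mathcal{F}(\rho_{23})\leq\mathcal{F}(\rho_{12})$, and estimate $u_2\cdot u_{21}$ via Lemma \ref{sankakukeilemma}(2) applied to the triangle $(z_0,z_1,z_2)$.

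\emph{Main obstacle.} The hard part is Step 3 in the cases $\hat{D}\neq\rho_1$, together with the uncontrolled angle at $z_1$ when $\rho_{23}$ is large. There the like-signed $\mathcal{F}(\rho_{12})$, $\mathcal{F}(\rho_{13})$ terms may not cancel exactly. In addition, the distances are only known to within $D^{\frac12}$, so the $\mathcal{F}$'s are not comparable up to constants. My first attempt would be to always compare against the largest of $\mathcal{F}(\rho_{12}),\mathcal{F}(\rho_{13}),\mathcal{F}(\hat{D})$ and check that each leftover coefficient has a favourable sign. Only after that would I use \eqref{repcon41} to absorb any remaining $\mathcal{F}(\rho_{jk})$ by $\mathcal{F}(\rho_1)+\mathcal{F}(\rho_2)+\mathcal{F}(\rho_3)$. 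Once the pointwise inequality holds in all cases, integrating from $T_1$ to $t$ gives the claim.
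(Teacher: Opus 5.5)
There is a genuine gap in the cases $\hat D=\rho_2$ and $\hat D=\rho_3$. It comes from a missing input, not from a hard computation: you never invoke Lemma~\ref{hitotuhanagai}, whose hypothesis \eqref{estimate0124} is exactly \eqref{lemass01251}.

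That lemma gives $\max(\rho_{12},\rho_{13},\rho_{23})\geq\tilde D+D^{\frac23}$. By \eqref{lemass01252} and continuity, the longest like-signed side is then the same one (say $\rho_{23}$) on all of $[T_1,T_2)$, and it satisfies $\rho_{23}-D\gtrsim D^{\frac23}$. Hence every term containing $\mathcal F(\rho_{23})$ is $o(\mathcal F(D))$ and drops out of all the equations. You are right that no \emph{upper} bound on $\rho_{23}$ is available, but the relevant fact is this \emph{lower} bound, and it is the first thing the paper uses.

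Without it your plan does not close. $\dot\rho_2$ contains $\mathcal F(\rho_{23})(u_{23}\cdot u_2)$, whose sign nothing in your Steps 1--2 controls. Since you discarded the nonpositive $\rho_{23}$-terms from $\dot\rho_{12},\dot\rho_{13}$, nothing is left to cancel it.

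The comparison $\mathcal F(\rho_{23})\le\mathcal F(\rho_{12})$ does not help. Under \eqref{lemass01251}--\eqref{lemass01252} alone, all three like-signed sides could lie within $O(1)$ of $\hat D=\rho_2$; they may even be shorter, by up to $C_\circ$, cf.\ \eqref{repcon43}. The best leftover is then $\frac12\mathcal F(\rho_2)+\frac12\mathcal F(\rho_{13})-\mathcal F(\rho_{23})$, which is not $\gtrsim\mathcal F(D)$. Also, \eqref{repcon41} only controls $\sum\mathcal F(\rho_{jk})$ with coefficient one, so it cannot repair this. A self-contained alternative would be to split according to whether $\rho_{23}-D>D^{\frac{99}{100}}$. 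In the opposite case you would keep $-\mathcal F(\rho_{23})(u_{21}\cdot u_{23})$ and $-\mathcal F(\rho_{23})(u_{31}\cdot u_{32})$, each $\le-\frac12\mathcal F(\rho_{23})+o(\mathcal F(D))$, and play them against $u_{23}\cdot u_2\ge-\frac12-O(D^{-\frac1{100}})$. That is not what you proposed.

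Even after $\rho_{23}$ is removed, the case $\hat D=\rho_2$ needs two bounds you do not supply.
\begin{itemize}
\item $c_{23}\ge-\frac12-O(D^{-\frac1{100}})$. This follows from $|u_2+u_3-u_1|^2=3+2c_{23}-2c_{12}-2c_{13}\ge0$ and $c_{12},c_{13}\ge\frac12-O(D^{-\frac1{100}})$. With only $c_{23}\ge-1$, the net coefficient of $\mathcal F(\rho_3)$ is too negative.
\item $u_{21}\cdot u_2\ge-\frac12-O(D^{-\frac1{100}})$. This comes from part (1) of Lemma~\ref{sankakukeilemma} at the vertex $z_2$, where both $\rho_2$ and $\rho_{12}$ are close to $D$. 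Part (2), which you cite, does not give it.
\end{itemize}
With these, $\dot\rho_2\ge2\mathcal F(\rho_2)+\frac12\mathcal F(\rho_1)-\frac12\mathcal F(\rho_3)-\frac12\mathcal F(\rho_{12})+o(\mathcal F(D))$.

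In the average of $\dot\rho_{12},\dot\rho_{13}$, the coefficient of $\mathcal F(\rho_2)$ is $\frac12u_{12}\cdot u_2\le\frac14+o(1)$. The $\mathcal F(\rho_1)$ terms contribute at most $\frac34\mathcal F(\rho_1)+o(\mathcal F(D))$: use $u_1\cdot u_{21}\le\frac12+o(1)$ if $\rho_1-D\le D^{\frac{99}{100}}$, and $\mathcal F(\rho_1)=o(\mathcal F(D))$ otherwise. Hence $\dot\rho_2-\frac{\dot\rho_{12}+\dot\rho_{13}}{2}\ge\frac74\mathcal F(\rho_2)-\frac14\mathcal F(\rho_1)-\mathcal F(\rho_3)+\frac12\mathcal F(\rho_{13})+o(\mathcal F(D))\ge\frac12\mathcal F(\hat D)+o(\mathcal F(D))\gtrsim\mathcal F(D)$. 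So the like-signed terms in fact cancel with a favorable sign.

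Your treatment of the case $\hat D=\rho_1$ and the final integration agree with the paper.
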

\begin{proof}
By symmetry, we may assume that we have for $T_1\leq t<T_2$
\begin{align*}
\rho_{12}(t)&<\tilde{D}(t)+D^{\frac{1}{2}}(t),
\\
\rho_{13}(t)&<\tilde{D}(t)+D^{\frac{1}{2}}(t).
\end{align*}
In this case, by Lemma \ref{hitotuhanagai}, we have $\rho_{23}(t)-D(t)\gtrsim D^{\frac{2}{3}}(t)$ for $T_1\leq t\leq T_2$. Moreover, since $\rho_{12},\ \rho_{23},\ \rho_{13}$ are continuous and \eqref{lemass01252} holds on $[T_1,T_2)$, we may assume that we have for $T_1\leq t<T_2$
\begin{align*}
\rho_{23}(t)=\max{(\rho_{12}(t),\rho_{13}(t),\rho_{23}(t))}.
\end{align*}
Then, by \eqref{rhoes1} and \eqref{rhojkes1}, we have
\begin{align*}
\dot{\rho_1}&=2\mathcal{F}(\rho_1)+\mathcal{F}(\rho_2)c_{12}+\mathcal{F}(\rho_3)c_{13}
\\
&\quad+\mathcal{F}(\rho_{12})u_{12}\cdot u_1+\mathcal{F}(\rho_{13})u_{13}\cdot u_1+o\left(\mathcal{F}(D)\right),
\\
\dot{\rho}_2&=\mathcal{F}(\rho_1)c_{12}+2\mathcal{F}(\rho_2)+\mathcal{F}(\rho_3)c_{23}
\\
&\quad+\mathcal{F}(\rho_{21})u_{21}\cdot u_2+o\left(\mathcal{F}(D)\right),
\\
\dot{\rho}_3&=\mathcal{F}(\rho_{1})c_{13}+\mathcal{F}(\rho_2)c_{23}+2\mathcal{F}(\rho_3)
\\
&\quad+\mathcal{F}(\rho_{31})u_{31}\cdot u_3+o\left(\mathcal{F}(D)\right),
\\
\dot{\rho}_{12}&=-2\mathcal{F}(\rho_{12})+\mathcal{F}(\rho_2)(u_{12}\cdot u_2)+\mathcal{F}(\rho_1)(u_1\cdot u_{21})
\\
&\quad -\mathcal{F}(\rho_{13})(u_{12}\cdot u_{13})+o\left(\mathcal{F}(D)\right),
\\
\dot{\rho}_{13}&=-2\mathcal{F}(\rho_{13})+\mathcal{F}(\rho_1)(u_{31}\cdot u_1)+\mathcal{F}(\rho_3)(u_{13}\cdot u_3)
\\
&\quad-\mathcal{F}(\rho_{12})(u_{13}\cdot u_{12})+o\left(\mathcal{F}(D)\right).
\end{align*}
In particular, we have
\begin{align}\label{1213avees}
\begin{aligned}
\frac{\dot{\rho}_{12}+\dot{\rho}_{13}}{2}&\leq -\frac{\mathcal{F}(\rho_{12})+\mathcal{F}(\rho_{13})}{2}+\frac{u_1\cdot u_{21}+u_1\cdot u_{31}}{2}\mathcal{F}(\rho_1)
\\
&\quad+\frac{u_{12}\cdot u_2}{2}\mathcal{F}(\rho_2)+\frac{u_{13}\cdot u_3}{2}\mathcal{F}(\rho_3)+o\left(\mathcal{F}(D)\right).
\end{aligned}
\end{align}
By \eqref{repcon43}, \eqref{lemass01251}, and \eqref{lemass01252}, we have
\begin{align*}
|\rho_{12}-D|&\leq D^{\frac{99}{100}},
\\
|\rho_{13}-D|&\leq D^{\frac{99}{100}},
\\
\rho_1-D&\geq -D^{\frac{99}{100}},
\\
\rho_2-D&\geq -D^{\frac{99}{100}},
\\
\rho_3-D&\geq -D^{\frac{99}{100}}.
\end{align*}
Therefore, by Lemma \ref{sankakukeilemma} we have
\begin{align}\label{c12c13es1}
c_{12}-\frac{1}{2}\geq -5D^{-\frac{1}{100}},\ c_{13}-\frac{1}{2}\geq -5D^{-\frac{1}{100}}.
\end{align}
Here, we assume that  $\rho_1=\hat{D}$. Then, we have
\begin{align*}
|\rho_{12}-D|\leq D^{\frac{99}{100}},
\\
|\rho_{13}-D|\leq D^{\frac{99}{100}},
\\
|\rho_1-D|\leq D^{\frac{99}{100}},
\end{align*}
and therefore by Lemma \ref{sankakukeilemma}, we obtain
\begin{align*}
u_2\cdot u_{12}-\frac{1}{2}\geq -5D^{-\frac{1}{100}},\ u_1\cdot u_{12}+\frac{1}{2}\geq -5D^{-\frac{1}{100}},
\\
u_3\cdot u_{13}-\frac{1}{2}\geq -5D^{-\frac{1}{100}},\ u_1\cdot u_{13}+\frac{1}{2}\geq -5D^{-\frac{1}{100}}.
\end{align*}
Thus, by \eqref{c12c13es1}, we have
\begin{align}\label{rho1hyouka}
\dot{\rho}_1&\geq 2\mathcal{F}(\rho_1)+\frac{1}{2}\left(\mathcal{F}(\rho_2)+\mathcal{F}(\rho_3)\right)-\frac{1}{2}\left(\mathcal{F}(\rho_{12})+\mathcal{F}(\rho_{13})\right)+o\left(\mathcal{F}(D)\right).
\end{align}
By \eqref{1213avees} and \eqref{rho1hyouka}, we have
\begin{align}\label{est01251}
\dot{\rho}_1-\frac{\dot{\rho}_{12}+\dot{\rho}_{13}}{2}\geq \mathcal{F}(\rho_1)+o\left(\mathcal{F}(D)\right)\gtrsim \mathcal{F}(D).
\end{align}
Next, suppose that $\rho_2=\hat{D}$. We have
\begin{align}\label{naisekihyouka0131}
|u_2+u_3-u_1|^2=3+2c_{23}-2c_{12}-2c_{13}\geq 0.
\end{align}
By \eqref{c12c13es1} and \eqref{naisekihyouka0131}, we have
\begin{align}\label{c23es1}
c_{23}+\frac{1}{2}\gtrsim -D^{-\frac{1}{100}}.
\end{align}
Furthermore, since $\rho_2$ satisfies $|\rho_2-D|\leq D^{\frac{99}{100}}$, we obtain
\begin{align}\label{naisekihyouka2}
u_{21}\cdot u_2+\frac{1}{2}\gtrsim -D^{-\frac{1}{100}}.
\end{align}
Therefore, we have
\begin{align}\label{rho2es01251}
\dot{\rho_2}\geq 2\mathcal{F}(\rho_2)+\frac{1}{2}\mathcal{F}(\rho_1)-\frac{1}{2}\mathcal{F}(\rho_3)-\frac{1}{2}\mathcal{F}(\rho_{12})+o\left(\mathcal{F}(D)\right).
\end{align}
Next, we estimate $\frac{\dot{\rho}_{12}+\dot{\rho}_{13}}{2}$. We first treat the terms involving $\rho_1$.
If $\rho_1$ satisfies $\rho_1-D>D^{\frac{99}{100}}$, then we have $\mathcal{F}(\rho_1)\lesssim D^{-\frac{1}{100}}\mathcal{F}(D)$; hence, together with \eqref{naisekihyouka2} we have
\begin{align*}
\frac{\dot{\rho}_{12}+\dot{\rho}_{13}}{2}\leq -\frac{1}{2}\left(\mathcal{F}(\rho_{12})+\mathcal{F}(\rho_{13})\right)+\frac{1}{4}\mathcal{F}(\rho_2)+\frac{1}{2}\mathcal{F}(\rho_3)+o\left(\mathcal{F}(D)\right).
\end{align*}
On the other hand, if $\rho_1$ satisfies $\rho_1-D\leq D^{\frac{99}{100}}$, then by Lemma \ref{sankakukeilemma} we have
\begin{align*}
u_1\cdot u_{21}-\frac{1}{2}\lesssim D^{-\frac{1}{100}}.
\end{align*}
Therefore we obtain
\begin{align*}
\frac{\dot{\rho}_{12}+\dot{\rho}_{13}}{2}&\leq -\frac{1}{2}\left(\mathcal{F}(\rho_{12})+\mathcal{F}(\rho_{13})\right)+\frac{3}{4}\mathcal{F}(\rho_1)
\\
&\quad+\frac{1}{4}\mathcal{F}(\rho_2)+\frac{1}{2}\mathcal{F}(\rho_3)+o\left(\mathcal{F}(D)\right).
\end{align*}
Thus, in either case, combining the above estimate with \eqref{rho2es01251} yields
\begin{align*}
\dot{\rho}_2-\frac{\dot{\rho}_{12}+\dot{\rho}_{13}}{2}&\geq \frac{7}{4}\mathcal{F}(\rho_2)-\frac{1}{4}\mathcal{F}(\rho_1)+o\left(\mathcal{F}(D)\right)
\\
&\gtrsim \mathcal{F}(D).
\end{align*}
If $\rho_3=\hat{D}$, the same argument as in the case where $\rho_2$ attains the minimum gives
\begin{align*}
\dot{\rho_3}-\frac{\dot{\rho}_{12}+\dot{\rho}_{13}}{2}\gtrsim \mathcal{F}(D).
\end{align*}
Therefore we obtain
\begin{align}\label{chukanhyouka0131}
\dot{\hat{D}}-\frac{\dot{\rho}_{12}+\dot{\rho}_{13}}{2}\gtrsim \mathcal{F}(D).
\end{align}
Integrating \eqref{chukanhyouka0131} on $[T_1,t]$ completes the proof.
\end{proof}
Using this lemma, we next show that the \emph{second} distance is also sufficiently far from $D$.

\begin{lemma}\label{midhenchou}
Let $\vec{u}$ be a (1,3)-sign solitary waves. Furthermore, we assume that we have for $T_1\leq t\leq T_2$
\begin{align}\label{estimate0125}
\tilde{D}(t)-\hat{D}(t)<D^{\frac{1}{4}}(t).
\end{align}
Then, we have for $T_1\leq t\leq T_2$
\begin{align}\label{DA21}
\m{(\rho_{12}(t),\rho_{13}(t),\rho_{23}(t))}\geq \tilde{D}(t)+D^{\frac{1}{3}}(t).
\end{align}
\end{lemma}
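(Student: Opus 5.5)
I will follow the bootstrap scheme of Lemma \ref{hitotuhanagai}, replacing Lemma \ref{henchousystem} by Lemma \ref{henchoumid2} and the average of the three like-signed distances by the average of the two smallest ones. Suppose, for contradiction, that there is $t^{\prime}\in[T_1,T_2]$ with
\begin{align*}
\m{(\rho_{12}(t^{\prime}),\rho_{13}(t^{\prime}),\rho_{23}(t^{\prime}))}<\tilde{D}(t^{\prime})+D^{\frac{1}{3}}(t^{\prime}).
\end{align*}
I apply Lemma \ref{henchoumod} at $T=t^{\prime}$ with an exponent $\epsilon$ strictly between $\frac13$ and $\frac12$, say $\epsilon=\frac{2}{5}$. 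This gives $T_3>t^{\prime}$ such that $|D(t)-D(t^{\prime})|\leq 2D^{\frac25}(t^{\prime})$ on $[t^{\prime},T_3]$ and $\int_{t^{\prime}}^{T_3}\mathcal{F}(D)\,ds\sim D^{\frac25}(t^{\prime})$.

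\textbf{Step 1: the hypotheses of Lemma \ref{henchoumid2} hold on $[t^{\prime},T_3]$.} Since each $\rho$ is Lipschitz with $|\dot\rho|\lesssim\mathcal{F}(D)$ by Lemma \ref{daijisiki}, the quantities $\tilde D$, $\hat D$ and $\m$ of the $\rho_{jk}$ change by at most $O(D^{\frac25}(t^{\prime}))$ on this interval. Together with \eqref{repcon43} and \eqref{estimate0125} at $t^{\prime}$, this gives, on $[t^{\prime},T_3]$,
\begin{align*}
\tilde D-\hat D\lesssim D^{\frac25}<D^{\frac12},\qquad \m{(\rho_{12},\rho_{13},\rho_{23})}-\tilde D\lesssim D^{\frac25}<D^{\frac12},
\end{align*}
for $D\gg1$. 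So \eqref{lemass01251} and \eqref{lemass01252} hold there.

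\textbf{Step 2: bootstrap and integration.} Set $\Phi=\hat D-\frac{\min+\m}{2}$, with $\min$ and $\m$ taken over $(\rho_{12},\rho_{13},\rho_{23})$. The bootstrap quantity is $\Phi+D^{\frac13}$. At $t^{\prime}$ it is positive, since $\hat D\geq\tilde D-D^{\frac14}$ and $\frac{\min+\m}{2}<\tilde D+\frac12D^{\frac13}$. Alternatively, I can skip the bootstrap entirely, because Step 1 already yields the hypotheses of Lemma \ref{henchoumid2} directly. That lemma then gives
\begin{align*}
\Phi(T_3)-\Phi(t^{\prime})\geq c\,c_\bullet D^{\frac25}(t^{\prime}).
\end{align*}
Now use $\frac{\min+\m}{2}\geq\tilde D$ at $T_3$, and $\frac{\min+\m}{2}<\tilde D+\frac12 D^{\frac13}$ together with $\hat D>\tilde D-D^{\frac14}$ at $t^{\prime}$. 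These give
\begin{align*}
\tilde D(T_3)-\hat D(T_3)\leq -c\,c_\bullet D^{\frac25}(t^{\prime})+D^{\frac13}(t^{\prime})+D^{\frac14}(t^{\prime}),
\end{align*}
which tends to $-\infty$. This contradicts \eqref{repcon43}, so \eqref{DA21} holds.

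\textbf{Main obstacle.} The delicate point is the exponent bookkeeping. The chosen $\epsilon$ must exceed $\frac13$, so that the gain $D^{\epsilon}$ beats the defect $D^{\frac13}$, and must stay below $\frac12$, so that the drift over the window stays within the $D^{\frac12}$ margins required by Lemma \ref{henchoumid2}. One also has to make sure the integral lower bound is not consumed by the constant losses in Step 1. If those constants interfere, I would fall back to the bootstrap on $\Phi+D^{\frac13}$ exactly as in \eqref{bs1.sono1}. A secondary point is that $\min$ and $\m$ are only Lipschitz, so the derivative in \eqref{chukanhyouka0131} must be read almost everywhere. This is harmless, since Lemma \ref{henchoumid2} is already stated in integrated form.
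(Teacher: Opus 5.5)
Your proposal is correct and follows the paper's proof essentially step for step: the same contradiction hypothesis at some $t^{\prime}$, the same window from Lemma \ref{henchoumod} with $\epsilon=\frac25$, the same verification of \eqref{lemass01251}--\eqref{lemass01252} on $[t^{\prime},T_3]$ via the Lipschitz drift, and the same final contradiction with \eqref{repcon43}. You are also right that the bootstrap on $\Phi+D^{\frac13}$ can be dropped, since the paper runs it but never uses it; likewise, working with $\frac{\min+\m}{2}$ rather than first fixing $\rho_{23}$ as the largest distance is only an equivalent bookkeeping choice.
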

\begin{proof}
We assume that there exists $T_1\leq t^{\prime}\leq T_2$ such that 
\begin{align}\label{DA22}
\m{(\rho_{12}(t^{\prime}),\rho_{13}(t^{\prime}),\rho_{23}(t^{\prime}))}<\tilde{D}(t^{\prime})+D^{\frac{1}{3}}(t^{\prime}).
\end{align} 
In this case, by symmetry and Lemma \ref{hitotuhanagai} and continuity, we may assume that
$\rho_{23}>\tilde{D}+D^{\frac{2}{3}}$ holds for $T_1\leq t\leq T_2$. Then, we introduce the following bootstrap estimate
\begin{align}\label{bsass2}
\hat{D}(t)-\frac{\rho_{12}(t)+\rho_{13}(t)}{2}+D^{\frac{1}{3}}(t)\geq 0.
\end{align}
By Lemma \ref{henchoumod}, there exists $T_3$ such that we have for $t^{\prime}\leq t\leq T_3$
\begin{align*}
|D(t)-D(t^{\prime})|\leq 2D^{\frac{2}{5}}(t^{\prime}),
\end{align*}
and we have
\begin{align*}
c_{\bullet} D^{\frac{2}{5}}(t^{\prime})\leq \int_{t^{\prime}}^{T_3}\mathcal{F}(D(s))ds\leq \frac{1}{c_{\bullet}} D^{\frac{2}{5}}(t^{\prime}).
\end{align*}
Then, by Lemma \ref{daijisiki}, \eqref{repcon43}, \eqref{estimate0125}, and \eqref{DA22}, we have for $t^{\prime}\leq t\leq T_3$
\begin{align*}
\tilde{D}(t)-\hat{D}(t)&\lesssim \tilde{D}(t^{\prime})-\hat{D}(t^{\prime})+D^{\frac{2}{5}}(t^{\prime})
\\
&\lesssim D^{\frac{2}{5}}(t),
\\
\m{(\rho_{12}(t),\rho_{13}(t),\rho_{23}(t))}-\tilde{D}(t)&\lesssim \m{(\rho_{12}(t^{\prime}),\rho_{13}(t^{\prime}),\rho_{23}(t^{\prime}))}-\tilde{D}(t^{\prime})+D^{\frac{2}{5}}(t^{\prime})
\\
&\lesssim D^{\frac{2}{5}}(t).
\end{align*}
In particular, we have for $t^{\prime}\leq t\leq T_3$
\begin{align}\label{DA23}
\begin{aligned}
\tilde{D}(t)-\hat{D}(t)&<D^{\frac{1}{2}}(t),
\\
\m{(\rho_{12}(t),\rho_{13}(t),\rho_{23}(t))}&<\tilde{D}(t)+D^{\frac{1}{2}}(t).
\end{aligned}
\end{align}
Furthermore, since we have
\begin{align*}
&\quad \hat{D}(t^{\prime})-\frac{\rho_{12}(t^{\prime})+\rho_{13}(t^{\prime})}{2}+D^{\frac{1}{3}}(t^{\prime})
\\
&\geq \tilde{D}(t^{\prime})-D^{\frac{1}{4}}(t^{\prime})-\frac{\rho_{12}(t^{\prime})+\rho_{13}(t^{\prime})}{2}+D^{\frac{1}{3}}(t^{\prime})
\\
&\geq \frac{D^{\frac{1}{3}}(t^{\prime})}{2}-D^{\frac{1}{4}}(t^{\prime})>0,
\end{align*}
we define $T_4\in [t^{\prime},T_3]$ by
\begin{align}\label{bs1sono2}
T_4=\sup{\{t\in[t^{\prime},T_3]\ \mbox{such that \eqref{bsass2} holds on}\ [t^{\prime},t]\}}.
\end{align}
We assume that $T_4<T_3$. Then, we have 
\begin{align*}
\hat{D}(T_4)-\frac{\rho_{12}(T_4)+\rho_{13}(T_4)}{2}+D^{\frac{1}{3}}(T_4)=0.
\end{align*}
Then, by \eqref{DA23}, we may apply Lemma \ref{henchoumid2}. Therefore, by \eqref{chukanhyouka0131} and $|\frac{d}{dt}D^{\frac{1}{3}}|\lesssim D^{-\frac{2}{3}}\mathcal{F}(D)$, we obtain
\begin{align*}
\frac{d}{dt}\left(\hat{D}-\frac{\rho_{12}+\rho_{13}}{2}+D^{\frac{1}{3}}\right)(T_4)>0.
\end{align*}
Therefore, we have $T_4=T_3$. Then, by Lemma \ref{henchoumid2}, there exists $c>0$ such that 
\begin{align}\label{DA25}
\begin{aligned}
&\quad \left(\hat{D}(T_3)-\frac{\rho_{12}(T_3)+\rho_{13}(T_3)}{2}\right)-\left(\hat{D}(t^{\prime})-\frac{\rho_{12}(t^{\prime})+\rho_{13}(t^{\prime})}{2}\right)
\\
&\geq c\int_{t^{\prime}}^{T_3}\mathcal{F}(D(s))ds
\\
&\geq  cc_{\bullet}D^{\frac{2}{5}}(t^{\prime}).
\end{aligned}
\end{align}
Furthermore, by \eqref{estimate0125}, \eqref{DA22}, \eqref{DA25}, and
\begin{align*}
\frac{\rho_{12}(T_3)+\rho_{13}(T_3)}{2}\geq \tilde{D}(T_3),
\end{align*}
we obtain
\begin{align*}
&\quad \hat{D}(T_3)-\tilde{D}(T_3)+D^{\frac{1}{4}}(t^{\prime})
\\
&\geq \hat{D}(T_3)-\tilde{D}(T_3)+\left(\tilde{D}(t^{\prime})-\hat{D}(t^{\prime})\right)
\\
&\geq \left(\hat{D}(T_3)-\frac{\rho_{12}(T_3)+\rho_{13}(T_3)}{2}\right)-\left(\hat{D}(t^{\prime})-\frac{\rho_{12}(t^{\prime})+\rho_{13}(t^{\prime})}{2}\right)-D^{\frac{1}{3}}(t^{\prime})
\\
&\geq cc_{\bullet}D^{\frac{2}{5}}(t^{\prime})-D^{\frac{1}{3}}(t^{\prime}).
\end{align*}
Therefore, we have
\begin{align*}
\tilde{D}(T_3)-\hat{D}(T_3)&\leq-cc_{\bullet}D^{\frac{2}{5}}(t^{\prime})+D^{\frac{1}{3}}(t^{\prime})+D^{\frac{1}{4}}(t^{\prime}),
\end{align*}
which contradicts \eqref{repcon43}. Therefore we complete the proof.
\end{proof}

\subsection{Distance analysis III}

From the discussion so far, we have shown that among $\rho_{12},\ \rho_{23},\ \rho_{13}$, it cannot happen that two or more of them are close to $D$.
Therefore, as a final step, we prove that $\tilde{D}$ is sufficiently larger than $\hat{D}$.
To this end, somewhat ad hoc, we define a constant $c_{\heartsuit}$ as
\begin{align}\label{cha-to}
e^{c_{\heartsuit}}=\frac{4+\sqrt{3}}{4}.
\end{align}
Furthermore, we define $D_{mod}$ as
\begin{align}\label{Dmoddef}
D_{mod}(t)=\min{(\rho_1(t),\rho_2(t),\rho_3(t)+c_{\heartsuit})}.
\end{align}
\begin{remark}
At first glance, the above definition of $c_{\heartsuit}$ may look like an unrelated numerical choice.
However, keeping in mind the algebraic relations satisfied by $c_{12},\ c_{23},\ c_{13}$ and seeking a condition that allows an argument similar to Lemma~\ref{henchousystem} and Lemma~\ref{henchoumid2}, one naturally arrives at the constant $c_{\heartsuit}$.
\end{remark}
\begin{lemma}\label{minhenchou}
Let $\vec{u}$ be a (1,3)-sign solitary waves . Furthermore, we assume that there exist $1\ll T_1< T_2\leq \infty$ such that we have for $T_1\leq t<T_2$
\begin{align}
\tilde{D}(t)-\hat{D}(t)<D^{\frac{1}{4}}(t). \label{lemass01253}
\end{align}
Then, we have for $T_1\leq t<T_2$
\begin{align}\label{Dmodes1}
\left(D_{mod}(t)-\tilde{D}(t)\right)-\left(D_{mod}(T_1)-\tilde{D}(T_1)\right)\gtrsim \int_{T_1}^t \mathcal{F}(D(s))ds.
\end{align}
\end{lemma}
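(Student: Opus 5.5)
Since \eqref{lemass01253} is stronger than \eqref{estimate0125}, Lemma \ref{midhenchou} applies on $[T_1,T_2)$. Together with Lemma \ref{hitotuhanagai}, it shows that two of $\rho_{12},\rho_{13},\rho_{23}$ exceed $\tilde{D}+D^{\frac{1}{3}}$.

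By continuity, the index pair realizing $\tilde{D}$ is then fixed on the whole interval. We label it $\{1,2\}$, so $\rho_{12}=\tilde{D}$, with $z_3$ the soliton singled out in $D_{mod}$. The terms $\mathcal{F}(\rho_{13}),\mathcal{F}(\rho_{23})$ are $O(e^{-D^{1/3}}\mathcal{F}(D))=o(\mathcal{F}(D))$ and can be dropped from \eqref{rhoes1} and \eqref{rhojkes1}. The plan, exactly as in Lemmas \ref{henchousystem} and \ref{henchoumid2}, is to prove the pointwise differential inequality
\begin{align*}
\dot{D}_{mod}-\dot{\rho}_{12}\gtrsim \mathcal{F}(D)
\end{align*}
in each case of which entry attains the minimum in \eqref{Dmoddef}, and then to integrate on $[T_1,t]$.

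First I collect the geometry. By \eqref{repcon43} and \eqref{lemass01253}, the quantities $\hat D$, $\tilde D$ and $D$ differ by at most $D^{\frac{1}{4}}$. Whenever $\rho_1,\rho_2\leq D+D^{\frac{99}{100}}$, Lemma \ref{sankakukeilemma} applied to the triangle $z_0,z_1,z_2$ gives, up to $O(D^{-\frac{1}{100}})$ errors,
\begin{align*}
c_{12}\approx \tfrac12,\quad u_{12}\cdot u_2\approx \tfrac12,\quad u_1\cdot u_{21}\approx\tfrac12,\quad u_{12}\cdot u_1\approx-\tfrac12 .
\end{align*}
If instead one of $\rho_1,\rho_2$ exceeds $D+D^{\frac{99}{100}}$, its $\mathcal{F}$ is $o(\mathcal{F}(D))$ and the corresponding inner product is irrelevant. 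This one-sided version is handled exactly as in Lemma \ref{henchoumid2}. From \eqref{rhojkes1} this yields
\begin{align*}
\dot{\rho}_{12}\leq -2\mathcal{F}(\rho_{12})+\tfrac12\mathcal{F}(\rho_1)+\tfrac12\mathcal{F}(\rho_2)+o(\mathcal{F}(D)).
\end{align*}
I also need lower bounds on $c_{13},c_{23}$. Since $|u_1+u_2|^2=2+2c_{12}\approx 3$, we have
\begin{align*}
c_{13}+c_{23}=u_3\cdot(u_1+u_2)\geq -\sqrt{3}-o(1).
\end{align*}
Individually, $c_{13},c_{23}\geq -1$. The far distances $\rho_{13},\rho_{23}\geq \tilde D+D^{\frac13}$ may be used if a sharper bound is needed.

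Case $D_{mod}=\rho_3+c_{\heartsuit}$. Here $\rho_3\leq\rho_1,\rho_2$, so by \eqref{tukaeruF1} and \eqref{cha-to},
\begin{align*}
\mathcal{F}(\rho_1),\ \mathcal{F}(\rho_2)\leq \mathcal{F}(\rho_3)+o(\mathcal{F}(D)),
\end{align*}
and \eqref{rhoes1} gives
\begin{align*}
\dot{\rho}_3\geq 2\mathcal{F}(\rho_3)+c_{13}\mathcal{F}(\rho_1)+c_{23}\mathcal{F}(\rho_2)+o(\mathcal{F}(D)).
\end{align*}
Combining with the bound on $\dot\rho_{12}$, the worst case is $\mathcal{F}(\rho_1)=\mathcal{F}(\rho_2)=\mathcal{F}(\rho_3)$ with $c_{13}+c_{23}=-\sqrt3$. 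This should still leave $\dot\rho_3-\dot\rho_{12}\gtrsim \mathcal{F}(\rho_{12})+\mathcal{F}(\rho_3)\gtrsim\mathcal{F}(D)$; checking it requires optimizing the linear form in $(\mathcal{F}(\rho_1),\mathcal{F}(\rho_2))$ on the admissible region.

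Case $D_{mod}=\rho_1$ (the case $\rho_2$ is symmetric). Now $\rho_3+c_{\heartsuit}\geq\rho_1$, so
\begin{align*}
\mathcal{F}(\rho_3)\leq e^{c_{\heartsuit}}\mathcal{F}(\rho_1)(1+o(1))=\tfrac{4+\sqrt3}{4}\mathcal{F}(\rho_1)(1+o(1)).
\end{align*}
Also $\mathcal{F}(\rho_2)\leq\mathcal{F}(\rho_1)(1+o(1))$. Using \eqref{rhoes1} with the values above,
\begin{align*}
\dot\rho_1-\dot\rho_{12}\geq \tfrac32\mathcal{F}(\rho_1)+\tfrac32\mathcal{F}(\rho_{12})+c_{13}\mathcal{F}(\rho_3)+o(\mathcal{F}(D)).
\end{align*}
Here the constant $c_{\heartsuit}$ must be exactly what makes the worst admissible value of $c_{13}\mathcal{F}(\rho_3)$ beaten by the positive terms.

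\textbf{Main obstacle.} The main obstacle is this last step: obtaining a sharp enough lower bound on $c_{13}$ (and, in the first case, on $c_{13}\mathcal{F}(\rho_1)+c_{23}\mathcal{F}(\rho_2)$). The crude bound $c_{13}\geq-1$ together with $\mathcal{F}(\rho_3)\leq\frac{4+\sqrt3}{4}\mathcal{F}(\rho_1)$ does not obviously close. I would first try to exploit the coupling through $c_{13}+c_{23}\geq-\sqrt3$ and the $\dot\rho_2$ terms, i.e.\ possibly compare against a weighted combination. Failing that, I would use Gram positivity \eqref{Apositive} with $c_{12}\approx\frac12$, plus the constraint that $\rho_{13},\rho_{23}$ are long, to restrict the angle of $u_3$. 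I expect the balance between the two cases to be what pins down $e^{c_{\heartsuit}}=\frac{4+\sqrt3}{4}$.
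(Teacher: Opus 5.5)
Your overall plan is the paper's: fix $\rho_{12}=\tilde D$ via Lemma \ref{midhenchou} and continuity, drop $\mathcal{F}(\rho_{13}),\mathcal{F}(\rho_{23})$, prove $\frac{d}{dt}D_{mod}-\dot\rho_{12}\gtrsim\mathcal{F}(D)$ case by case, and integrate. However, the proposal stops at the step that is the real content of the lemma. Moreover, in the case $D_{mod}=\rho_3+c_{\heartsuit}$ the bound you actually wrote cannot close. There you weakened $\rho_3+c_{\heartsuit}\le\rho_1,\rho_2$ to $\rho_3\le\rho_1,\rho_2$, which gives only $\mathcal{F}(\rho_1),\mathcal{F}(\rho_2)\le\mathcal{F}(\rho_3)+o(\mathcal{F}(D))$. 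Your own worst case then gives $\dot\rho_3-\dot\rho_{12}\ge(1-\sqrt3)\mathcal{F}(\rho_3)+2\mathcal{F}(\rho_{12})+o(\mathcal{F}(D))$. The term $\mathcal{F}(\rho_{12})$ cannot rescue this: \eqref{lemass01253} allows $\rho_{12}=\tilde D$ to exceed $\hat D$ by up to $D^{1/4}$, so $\mathcal{F}(\rho_{12})$ may be $o(\mathcal{F}(D))$.

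The fix is to keep the shift. By \eqref{tukaeruF1}, for $j=1,2$,
\[
\mathcal{F}(\rho_j)\le \tfrac{4}{4+\sqrt3}\,\mathcal{F}(\rho_3)+O(\mathcal{F}(D)/D)=:F_*.
\]
The linear form $(c_{13}-\frac12)\mathcal{F}(\rho_1)+(c_{23}-\frac12)\mathcal{F}(\rho_2)$ is minimized at a corner of $[0,F_*]^2$. Your bounds $c_{13},c_{23}\ge-1$ and $c_{13}+c_{23}\ge-\sqrt3-o(1)$ bound it below by $-(1+\sqrt3)F_*$. This yields $\dot\rho_3-\dot\rho_{12}\ge\frac{4-2\sqrt3}{4+\sqrt3}\mathcal{F}(\rho_3)+o(\mathcal{F}(D))$. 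When one of $\rho_1,\rho_2$ exceeds $D+D^{99/100}$ the bound is easier, namely $2\mathcal{F}(\rho_3)-\frac32\mathcal{F}(\min(\rho_1,\rho_2))$. Your derivation of $c_{13}+c_{23}\ge-\sqrt3$ from $|u_1+u_2|^2=2+2c_{12}$ is a cleaner route than the paper's Gram-determinant argument.

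Conversely, the case $D_{mod}=\rho_1$, which you single out as the main obstacle, is already closed by the inequality you wrote. The crude bound $c_{13}\ge-1$ together with $\mathcal{F}(\rho_3)\le\frac{4+\sqrt3}{4}\mathcal{F}(\rho_1)+O(\mathcal{F}(D)/D)$ gives
\[
\dot\rho_1-\dot\rho_{12}\ge\Bigl(\tfrac32-\tfrac{4+\sqrt3}{4}\Bigr)\mathcal{F}(\rho_1)+o(\mathcal{F}(D))=\tfrac{2-\sqrt3}{4}\,\mathcal{F}(\rho_1)+o(\mathcal{F}(D))\gtrsim\mathcal{F}(D),
\]
since $\rho_1=D_{mod}\le\hat D+c_{\heartsuit}\le D+C$. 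You do not need sharper control on $u_3$ or weighted combinations with $\dot\rho_2$. The long sides $\rho_{13},\rho_{23}$ would only give upper bounds on $c_{13},c_{23}$ by the law of cosines, so they could not help anyway.

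The two cases thus impose $e^{c_{\heartsuit}}<\frac32$ and $e^{c_{\heartsuit}}>\frac{1+\sqrt3}{2}$, and $\frac{4+\sqrt3}{4}$ is exactly the midpoint of this window. This is the balance you anticipated, but the proposal never carries out the computation, and as written it asserts a bound in the $\rho_3$ case that is false.
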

\begin{proof}
By symmetry, we may assume that $\tilde{D}(T_1)=\rho_{12}(T_1)$.
In this case, by Lemma \ref{midhenchou} and continuity, we have for $T_1\leq t<T_2$.
\begin{align*}
\rho_{12}(t)=\tilde{D}(t),\ \rho_{13}(t)>\tilde{D}(t)+D^{\frac{1}{3}}(t),\ \rho_{23}(t)>\tilde{D}(t)+D^{\frac{1}{3}}(t).
\end{align*}
Then we have
\begin{align*}
\dot{\rho_1}&=2\mathcal{F}(\rho_1)+\mathcal{F}(\rho_2)c_{12}+\mathcal{F}(\rho_3)c_{13}+\mathcal{F}(\rho_{12})u_{12}\cdot u_1+o\left(\mathcal{F}(D)\right),
\\
\dot{\rho}_2&=\mathcal{F}(\rho_1)c_{12}+2\mathcal{F}(\rho_2)+\mathcal{F}(\rho_3)c_{23}+\mathcal{F}(\rho_{12})u_{21}\cdot u_2+o\left(\mathcal{F}(D)\right),
\\
\dot{\rho}_3&=\mathcal{F}(\rho_{1})c_{13}+\mathcal{F}(\rho_2)c_{23}+2\mathcal{F}(\rho_3)+o\left(\mathcal{F}(D)\right),
\\
\dot{\rho}_{12}&=-2\mathcal{F}(\rho_{12})+\mathcal{F}(\rho_2)(u_{12}\cdot u_2)+\mathcal{F}(\rho_1)(u_1\cdot u_{21})+o\left(\mathcal{F}(D)\right).
\end{align*}
We note that by definition we have $|D_{mod}-\hat{D}|\leq c_{\heartsuit}$.
First, we assume that $D_{mod}=\rho_1$.
Since we have $\rho_1\leq \rho_3+c_{\heartsuit}$, by \eqref{tukaeruF1} we obtain
\begin{align}\label{rho1rho3hyouka}
\mathcal{F}(\rho_3)-e^{c_{\heartsuit}}\mathcal{F}(\rho_1)\lesssim \frac{\mathcal{F}(D)}{D}.
\end{align}
Since we have for $T_1\leq t<T_2$
\begin{align*}
|\rho_1-D|+|\rho_{12}-D|\leq D^{\frac{99}{100}},
\\
\rho_{2}-D\geq -D^{\frac{99}{100}},
\end{align*}
by Lemma \ref{sankakukeilemma} we obtain
\begin{align}\label{kakudo1261}
c_{12}-\frac{1}{2}\gtrsim -D^{-\frac{1}{100}},\ u_1\cdot u_{21}-\frac{1}{2}\lesssim D^{-\frac{1}{100}}.
\end{align}
Furthermore, if we have $\rho_2>D+D^{\frac{99}{100}}$, then we have
\begin{align*}
\mathcal{F}(\rho_2)\lesssim  \frac{\mathcal{F}(D)}{D},
\end{align*}
and therefore we obtain
\begin{align*}
\dot{\rho}_1-\dot{\rho}_{12}&\geq \frac{3}{2}\mathcal{F}(\rho_1)-\mathcal{F}(\rho_3)+o\left(\mathcal{F}(D)\right).
\end{align*}
On the other hand, if we have $\rho_2<D+D^{\frac{99}{100}}$, then by Lemma \ref{sankakukeilemma} we have
\begin{align*}
\left|c_{12}-\frac{1}{2}\right|+\left|u_{12}\cdot u_2-\frac{1}{2}\right|+\left|u_1\cdot u_{21}-\frac{1}{2}\right|\lesssim D^{-\frac{1}{100}},
\end{align*}
and therefore we obtain
\begin{align*}
\dot{\rho}_1-\dot{\rho}_{12}&\geq \frac{3}{2}\mathcal{F}(\rho_1)-\mathcal{F}(\rho_3)+o\left(\mathcal{F}(D)\right).
\end{align*}
Hence, in either case,
\begin{align}\label{rho1-rho12}
\dot{\rho}_1-\dot{\rho}_{12}&\geq \frac{3}{2}\mathcal{F}(\rho_1)-\mathcal{F}(\rho_3)+o\left(\mathcal{F}(D)\right).
\end{align}
By \eqref{kakudo1261} and \eqref{rho1-rho12}, we have
\begin{align}\label{rho1minnnotoki}
\begin{aligned}
\dot{\rho}_1-\dot{\rho}_{12}&\geq\frac{3}{2}\mathcal{F}(\rho_1)-\frac{4+\sqrt{3}}{4}\mathcal{F}(\rho_1)+o\left(\mathcal{F}(D)\right)
\\
&=\frac{2-\sqrt{3}}{4}\mathcal{F}(D_{mod})+o\left(\mathcal{F}(D)\right)
\\
&\gtrsim \mathcal{F}(D).
\end{aligned}
\end{align}
The case $D_{mod}=\rho_2$ can be treated by the same argument as above. Next, we assume that $D_{mod}=\rho_3+c_{\heartsuit}$. When $\rho_1$ and $\rho_2$ satisfy
\begin{align*}
\rho_1&>D+D^{\frac{99}{100}},
\\
\rho_2&>D+D^{\frac{99}{100}},
\end{align*}
we have
\begin{align*}
\dot{\rho}_3-\dot{\rho}_{12}\geq 2\mathcal{F}(\rho_3)+o\left(\mathcal{F}(D)\right)\gtrsim \mathcal{F}(D).
\end{align*}
Next, if $\rho_1$ and $\rho_2$ satisfy 
\begin{align*}
\rho_1&<D+D^{\frac{99}{100}},
\\
\rho_2&>D+D^{\frac{99}{100}},
\end{align*}
by Lemma \ref{sankakukeilemma}, we have
\begin{align*}
u_1\cdot u_{21}-\frac{1}{2}\lesssim D^{-\frac{1}{100}}.
\end{align*}
Therefore we obtain
\begin{align*}
\dot{\rho}_3-\dot{\rho}_{12}&\geq 2\mathcal{F}(\rho_3)-\frac{3}{2}\mathcal{F}(\min{(\rho_1,\rho_2)})+o\left(\mathcal{F}(D)\right)\gtrsim \mathcal{F}(D).
\end{align*}
When $\rho_1, \rho_2$ satisfy $\rho_1>D+D^{\frac{99}{100}}$ and $\rho_2<D+D^{\frac{99}{100}}$, the same argument yields the above inequality.

Last, when $\rho_1$ and $\rho_2$ satisfy
\begin{align*}
\rho_1&<D+D^{\frac{99}{100}},
\\
\rho_2&<D+D^{\frac{99}{100}},
\end{align*}
by Lemma \ref{sankakukeilemma} we have
\begin{align}\label{DA31}
\left|c_{12}-\frac{1}{2}\right|+\left|u_{12}\cdot u_2-\frac{1}{2}\right|+\left|u_1\cdot u_{21}-\frac{1}{2}\right|\lesssim D^{-\frac{1}{100}}.
\end{align}
By \eqref{Apositive} and \eqref{DA31}, we have
\begin{align*}
\frac{3}{4}+c_{13}c_{23}-c_{13}^2-c_{23}^2\gtrsim -D^{-\frac{1}{100}}.
\end{align*}
Furthermore, since we have
\begin{align*}
c_{13}^2+c_{23}^2-c_{13}c_{23}=\frac{1}{4}\left(c_{13}+c_{23}\right)^2+\frac{3}{4}\left(c_{13}-c_{23}\right)^2\geq \frac{1}{4}\left(c_{13}+c_{23}\right)^2,
\end{align*}
we obtain
\begin{align}\label{c13c23es1}
c_{13}+c_{23}+\sqrt{3}\gtrsim -D^{-\frac{1}{100}}.
\end{align}
We now combine the arguments above. First, if $c_{13}, c_{23}$ satisfy $c_{13}\geq 0$ or $c_{23}\geq 0$, then we have
\begin{align*}
\dot{\rho}_3-\dot{\rho}_{12}&\geq 2\mathcal{F}(\rho_3)-2\mathcal{F}(\min{(\rho_1,\rho_2)})+o\left(\mathcal{F}(D)\right)
\\
&\gtrsim \mathcal{F}(D).
\end{align*}
If $c_{13}, c_{23}$ satisfy $c_{13}<0$ and $c_{23}<0$, then by \eqref{c13c23es1} we have
\begin{align}\label{rho3-rho12}
\begin{aligned}
\dot{\rho}_3-\dot{\rho}_{12}&\geq 2\mathcal{F}(\rho_3)+(c_{13}+c_{23}-1)\mathcal{F}(\min{(\rho_1,\rho_2)})+o\left(\mathcal{F}(D)\right)
\\
&\geq 2\mathcal{F}(\rho_3)-(\sqrt{3}+1)\mathcal{F}(\min{(\rho_1,\rho_2)})+o\left(\mathcal{F}(D)\right).
\end{aligned}
\end{align}
Furthermore, since $\rho_3+c_{\heartsuit}\leq \min{(\rho_1,\rho_2)}$ holds, we obtain
\begin{align}\label{minrho1rho2es}
\mathcal{F}(\min{(\rho_1,\rho_2)})\leq \frac{4}{4+\sqrt{3}}\mathcal{F}(\rho_3)+o\left(\mathcal{F}(D)\right).
\end{align}
By \eqref{rho3-rho12} and \eqref{minrho1rho2es}, we have
\begin{align*}
\dot{\rho}_3-\dot{\rho}_{12}&\geq \left(2-\frac{4(\sqrt{3}+1)}{4+\sqrt{3}}\right)\mathcal{F}(\rho_3)+o\left(\mathcal{F}(D)\right)
\\
&=\frac{4-2\sqrt{3}}{4+\sqrt{3}}\mathcal{F}(\rho_3)+o\left(\mathcal{F}(D)\right)
\\
&\gtrsim \mathcal{F}(D).
\end{align*}
From the above, we have for $T_1\leq t<T_2$,
\begin{align}\label{Dmodes}
\frac{d}{dt}D_{mod}-\dot{\rho}_{12}\gtrsim \mathcal{F}(D).
\end{align}
Hence, integrating \eqref{Dmodes} on $[T_1,t]$ yields \eqref{Dmodes1} and completes the proof.
\end{proof}

Using the above lemma, we now prove Proposition \ref{tildeDnagai}.

\begin{proof}[Proof of Proposition \ref{tildeDnagai}]
We assume that there exists $1\ll t^{\prime}$ such that 
\begin{align}\label{DA334}
\tilde{D}(t^{\prime})-\hat{D}(t^{\prime})\leq D^{\frac{1}{5}}(t^{\prime}).
\end{align} 
Then, by Lemma \ref{henchoumod}, there exists $T_1$ such that we have for $t^{\prime}\leq t\leq T_1$
\begin{align*}
&\left|D(t)-D(t^{\prime})\right|\leq 2D^{\frac{2}{9}}(t^{\prime}),
\\
c_{\bullet}D^{\frac{2}{9}}(t^{\prime})\leq &\int_{t^{\prime}}^{T_1}\mathcal{F}(D(s))ds\leq \frac{1}{c_{\bullet}}D^{\frac{2}{9}}(t^{\prime}).
\end{align*}
Then, by Lemma \ref{daijisiki}, we have for $t^{\prime}\leq t\leq T_1$
\begin{align*}
\tilde{D}(t)-\hat{D}(t)&\lesssim \tilde{D}(t^{\prime})-\hat{D}(t^{\prime})+D^{\frac{2}{9}}(t)
\\
&\lesssim D^{\frac{2}{9}}(t^{\prime}),
\end{align*}
and therefore we have for $t^{\prime}\leq t\leq T_1$
\begin{align*}
\tilde{D}(t)-\hat{D}(t)<D^{\frac{1}{4}}(t).
\end{align*}
Thus, by Lemma \ref{minhenchou} and \eqref{Dmoddef} and \eqref{DA334}, there exists $c>0$ such that 
\begin{align*}
&\quad \left(\hat{D}(T_1)-\tilde{D}(T_1)\right)+D^{\frac{1}{5}}(t^{\prime})+2c_{\heartsuit}
\\
&\geq \left(D_{mod}(T_1)-\tilde{D}(T_1)\right)-\left(D_{mod}(t^{\prime})-\tilde{D}(t^{\prime})\right)
\\
&\geq c\int_{t^{\prime}}^{T_1}\mathcal{F}(D(s))ds
\\
&\geq cc_{\bullet}D^{\frac{2}{9}}(t^{\prime}).
\end{align*}
By the above estimate, we obtain
\begin{align*}
\tilde{D}(T_1)-\hat{D}(T_1)\leq -cc_{\bullet}D^{\frac{2}{9}}(t^{\prime})+D^{\frac{1}{5}}(t^{\prime})+2c_{\heartsuit},
\end{align*}
which contradicts \eqref{repcon43}. Therefore we complete the proof.
\end{proof}

\section{A rough equilateral-triangle decomposition}
By Proposition \ref{tildeDnagai}, the contributions coming from $\rho_{12},\ \rho_{23},\ \rho_{13}$ are negligible compared with $\mathcal{F}(D)$.
In particular, this estimate implies that \eqref{Vrep} holds, and hence we can simplify the leading term in Corollary \ref{corollary}.
%\begin{lemma}\label{section5solitondynamics}
%Let $\vec{u}$ be a (1,3)-sign solitary waves. Then $z_0,z_1,z_2,z_3$ satisfy for $t\gg 1$
%\begin{align}\label{(1,3)zdynamicsver2}
%\begin{aligned}
%\dot{z}_0&=-\mathcal{F}(\rho_1)u_1-\mathcal{F}(\rho_2)u_2-\mathcal{F}(\rho_3)u_3+O\left(\frac{\mathcal{F}(D)}{D^8}\right),
%\\
%\dot{z}_1&=\mathcal{F}(\rho_1)u_1+O\left(\frac{\mathcal{F}(D)}{D^8}\right),
%\\
%\dot{z}_2&=\mathcal{F}(\rho_2)u_2+O\left(\frac{\mathcal{F}(D)}{D^8}\right),
%\\
%\dot{z}_3&=\mathcal{F}(\rho_3)u_3+O\left(\frac{\mathcal{F}(D)}{D^8}\right).
%\end{aligned}
%\end{align}
%\end{lemma}
%\begin{proof}
%By Proposition \ref{tildeDnagai}, we have
%\begin{align*}
%\mathcal{F}(\tilde{D})\lesssim e^{-D^{\frac{1}{5}}}\mathcal{F}(D)\lesssim \frac{\mathcal{F}(D)}{D^8}.
%\end{align*}
%In particular, \eqref{Vrep} holds true. Thus, by Lemma \ref{epzes}, we obtain \eqref{(1,3)zdynamicsver2}.
%\end{proof}
%In particular, using the above estimate and repeating the same computation as in Corollary \ref{corollary}, we also obtain the following lemma.

\begin{lemma}\label{daijisiki2}
Let $\vec{u}$ be a (1,3)-sign solitary waves. Then the following hold:
\begin{enumerate}
\item  $z_0,z_1,z_2,z_3$ satisfy for $t\gg 1$
\begin{align}\label{(1,3)zdynamicsver2}
\begin{aligned}
\dot{z}_0&=-\mathcal{F}(\rho_1)u_1-\mathcal{F}(\rho_2)u_2-\mathcal{F}(\rho_3)u_3+O\left(\frac{\mathcal{F}(D)}{D^6}\right),
\\
\dot{z}_1&=\mathcal{F}(\rho_1)u_1+O\left(\frac{\mathcal{F}(D)}{D^6}\right),
\\
\dot{z}_2&=\mathcal{F}(\rho_2)u_2+O\left(\frac{\mathcal{F}(D)}{D^6}\right),
\\
\dot{z}_3&=\mathcal{F}(\rho_3)u_3+O\left(\frac{\mathcal{F}(D)}{D^6}\right).
\end{aligned}
\end{align}

\item $Z_1,Z_2,Z_3$ satisfy for $t\gg 1$,
\begin{align}\label{Zesver2}
\begin{aligned}
\dot{Z}_1&=2\mathcal{F}(\rho_1)u_1+\mathcal{F}(\rho_2)u_2+\mathcal{F}(\rho_3)u_3+O\left(\frac{\mathcal{F}(D)}{D^6}\right),
\\
\dot{Z}_2&=\mathcal{F}(\rho_1)u_1+2\mathcal{F}(\rho_2)u_2+\mathcal{F}(\rho_3)u_3+O\left(\frac{\mathcal{F}(D)}{D^6}\right),
\\
\dot{Z}_3&=\mathcal{F}(\rho_1)u_1+\mathcal{F}(\rho_2)u_2+2\mathcal{F}(\rho_3)u_3+O\left(\frac{\mathcal{F}(D)}{D^6}\right).
\end{aligned}
\end{align}

\item $\rho_1,\ \rho_2,\ \rho_3$ satisfy for $t\gg 1$
\begin{align}\label{rhoesver2}
\begin{aligned}
\dot{\rho_1}&=2\mathcal{F}(\rho_1)+\mathcal{F}(\rho_2)c_{12}+\mathcal{F}(\rho_3)c_{13}+O\left(\frac{\mathcal{F}(D)}{D^6}\right),
\\
\dot{\rho}_2&=\mathcal{F}(\rho_1)c_{12}+2\mathcal{F}(\rho_2)+\mathcal{F}(\rho_3)c_{23}+O\left(\frac{\mathcal{F}(D)}{D^6}\right),
\\
\dot{\rho}_3&=\mathcal{F}(\rho_{1})c_{13}+\mathcal{F}(\rho_2)c_{23}+2\mathcal{F}(\rho_3)+O\left(\frac{\mathcal{F}(D)}{D^6}\right).
\end{aligned}
\end{align}

\item $c_{12},c_{13},c_{23}$ satisfy for $t\gg 1$
\begin{align}\label{cesver2}
\begin{aligned}
\dot{c}_{12}&=(1-c_{12}^2)\left(\frac{\mathcal{F}(\rho_1)}{\rho_2}+\frac{\mathcal{F}(\rho_2)}{\rho_1}\right)+\mathcal{F}(\rho_3)\left( \frac{c_{23}-c_{13}c_{12}}{\rho_1}+\frac{c_{13}-c_{23}c_{12}}{\rho_2}\right)
\\
&\quad+O\left(\frac{\mathcal{F}(D)}{D^7}\right),
\\
\dot{c}_{13}&=(1-c_{13}^2)\left(\frac{\mathcal{F}(\rho_1)}{\rho_3}+\frac{\mathcal{F}(\rho_3)}{\rho_1}\right)+\mathcal{F}(\rho_2)\left( \frac{c_{23}-c_{12}c_{13}}{\rho_1}+\frac{c_{12}-c_{13}c_{23}}{\rho_3}\right)
\\
&\quad+O\left(\frac{\mathcal{F}(D)}{D^7}\right),
\\
\dot{c}_{23}&=(1-c_{23}^2)\left(\frac{\mathcal{F}(\rho_2)}{\rho_3}+\frac{\mathcal{F}(\rho_3)}{\rho_2}\right)+\mathcal{F}(\rho_1)\left( \frac{c_{13}-c_{12}c_{23}}{\rho_2}+\frac{c_{12}-c_{13}c_{23}}{\rho_3}\right)
\\
&\quad+O\left(\frac{\mathcal{F}(D)}{D^7}\right).
\end{aligned}
\end{align}
\end{enumerate}
\end{lemma}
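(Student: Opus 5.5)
The plan is to use Proposition \ref{tildeDnagai} to make every like-signed interaction negligible, deduce \eqref{Vrep}, and then read off all four parts from Corollary \ref{corollary}. Proposition \ref{tildeDnagai} gives $\tilde{D}\geq D+D^{\frac15}$ for $t\gg1$. By \eqref{tukaeruF1} (or directly from \eqref{gnozenkin1}),
\begin{align*}
\mathcal{F}(\rho_{jk})\leq \mathcal{F}(\tilde D)\lesssim e^{-D^{\frac15}}\mathcal{F}(D)\lesssim \frac{\mathcal{F}(D)}{D^{N}}
\end{align*}
for every fixed $N$. In particular $\mathcal{F}(\rho_{12})+\mathcal{F}(\rho_{13})+\mathcal{F}(\rho_{23})=o(\mathcal{F}(D))$. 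Since $D=\min(\tilde D,\hat D)$ and $\tilde D>D$, we have $D=\hat D$, so some $\rho_k$ equals $D$. Hence
\begin{align*}
V\geq \mathcal{F}(D)-3\mathcal{F}(\tilde D)\geq \tfrac12\mathcal{F}(D),
\end{align*}
which is \eqref{Vrep}. This lets us apply Lemma \ref{epzes} and Corollary \ref{corollary}, so all errors become $O(e^{-\theta D})$ with some fixed $1<\theta<\min(p-1,2)$. Since $\mathcal{F}(D)\sim D^{-\frac{d-1}{2}}e^{-D}$, we get $e^{-\theta D}\lesssim e^{-(\theta-1)D}\mathcal{F}(D)\lesssim \mathcal{F}(D)/D^{7}$.

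Parts (1)--(3) then follow by substitution. For (1), start from \eqref{newzes} in Lemma \ref{epzes}, in the sign convention of Lemma \ref{(1,3)centerdynamics}. In each $\dot z_k$, every term $\mathcal{F}(\rho_{jk})u_{jk}$ has unit-vector factor of size one and coefficient $O(\mathcal{F}(D)/D^6)$, so it goes into the error. Parts (2) and (3) come from \eqref{Zes10210} and \eqref{rhoes10210} in exactly the same way. The inner products $u_{jk}\cdot u_k$ are bounded by $1$, so discarding those terms costs at most $O(\mathcal{F}(D)/D^6)$.

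Part (4) is the only place needing care, because the claimed error is a full factor $D^{-1}$ smaller: $O(\mathcal{F}(D)/D^7)$. The terms to remove in \eqref{ces10210} are of three kinds.
\begin{itemize}
\item[(a)] $(1-c_{12}^2)\frac{\mathcal{F}(\rho_{12})}{\rho_{12}}\left(\frac{\rho_2}{\rho_1}+\frac{\rho_1}{\rho_2}\right)$.
\item[(b)] $\frac{\mathcal{F}(\rho_{13})}{\rho_{13}}\frac{\rho_3}{\rho_1}(c_{23}-c_{12}c_{13})$.
\item[(c)] The analogous terms for the other index pairs.
\end{itemize}
The concern is that the ratios $\rho_j/\rho_k$ might be unbounded. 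To handle this, use that $\rho_{jk}\geq|\rho_j-\rho_k|$ and, for $\rho_j\geq 2\rho_k$, also $\rho_{jk}\geq \rho_j/2$. Then
\begin{align*}
\frac{\mathcal{F}(\rho_{jk})}{\rho_{jk}}\cdot\frac{\max(\rho_j,\rho_k)}{\min(\rho_j,\rho_k)}\lesssim \frac{\mathcal{F}(\tilde D)}{D}\cdot\frac{\max(\rho_j,\rho_k)}{\rho_{jk}}+\frac{\mathcal{F}(\tilde D)}{D},
\end{align*}
with $\rho_{jk}\gtrsim\max(\rho_j,\rho_k)$ in the unbalanced regime and the ratio bounded in the balanced regime. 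Every such term is therefore $\lesssim \mathcal{F}(\tilde D)/D\lesssim \mathcal{F}(D)/D^{7}$, again using $\mathcal{F}(\tilde D)\lesssim e^{-D^{1/5}}\mathcal{F}(D)$. As a cruder fallback, one can bound the ratios by exponential factors: a large ratio forces a huge $\rho_{jk}$, which the exponential decay of $\mathcal{F}$ absorbs. The $O(e^{-\theta D})$ remainder is already $O(\mathcal{F}(D)/D^7)$ by the bound above, which completes (4).

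The main obstacle, though mild, is exactly this ratio bound in (4): one must not lose powers of $D$ through $\rho_j/\rho_k$. The triangle inequality $\rho_{jk}\geq|\rho_j-\rho_k|$ resolves it.
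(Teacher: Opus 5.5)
Your proof is correct and follows the paper's route. Proposition \ref{tildeDnagai} gives $\mathcal{F}(\tilde D)\lesssim e^{-D^{1/5}}\mathcal{F}(D)$, hence \eqref{Vrep}, and Lemma \ref{epzes} together with Corollary \ref{corollary} yields all four parts once the like-signed terms are absorbed into the error. Your treatment of the ratios $\rho_j/\rho_k$ in part (4) fills in a step the paper leaves implicit; equivalently, $\rho_2\le\rho_1+\rho_{12}$ gives $\frac{\mathcal{F}(\rho_{12})\rho_2}{\rho_{12}\rho_1}\le\frac{2\mathcal{F}(\tilde D)}{D}$. Two minor slips, neither of which affects the conclusion: the bound on $\mathcal{F}(\tilde D)$ must come from \eqref{gnozenkin1} plus monotonicity of $\mathcal{F}$, since \eqref{tukaeruF1} only gives an $O(\mathcal{F}(D)/D)$ error; and the correct bound is $e^{-\theta D}\lesssim D^{\frac{d-1}{2}}e^{-(\theta-1)D}\mathcal{F}(D)$, with a polynomial factor you dropped.
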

\begin{proof}
By Proposition \ref{tildeDnagai}, we have
\begin{align*}
\mathcal{F}(\tilde{D})\lesssim e^{-D^{\frac{1}{5}}}\mathcal{F}(D)\lesssim \frac{\mathcal{F}(D)}{D^6}.
\end{align*}
In particular, \eqref{Vrep} holds true. Thus, by Lemma \ref{epzes} and Corollary \ref{corollary}, we complete the proof.
\end{proof}
Using these equations, we analyze the long-time behavior of $(1,3)$-sign solitary waves solutions.

\subsection{Estimates for $D$}

First, we give bounds on $D$.

\begin{lemma}\label{Dlogtorder}
Let $\vec{u}$ be a (1,3)-sign solitary waves. Then, we have for $t\gg 1$
\begin{align}
\left|D(t)-\log{t}+\frac{d-1}{2}\log{(\log{t})}\right|&\lesssim 1, \label{Dsimes1}
\\
\mathcal{F}(D)\sim \frac{1}{t}. \label{mathcalFsimes1}
\end{align}
\end{lemma}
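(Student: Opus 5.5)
Since Lemma \ref{limeplem} already gives the upper bound \eqref{Dupper} and the lower bound \eqref{F(D)lower}, and since by \eqref{gnozenkin1} and \eqref{Fdef} we have $\mathcal{F}(D)\sim D^{-\frac{d-1}{2}}e^{-D}$, both \eqref{Dsimes1} and \eqref{mathcalFsimes1} will follow once we prove the single complementary bound
\begin{align*}
\mathcal{F}(D(t))\lesssim \frac{1}{t}\quad (t\gg 1).
\end{align*}
Indeed, this gives $D+\frac{d-1}{2}\log D\geq \log t-C$. Combined with \eqref{Dupper}, which gives $D\sim\log t$ and hence $\log D=\log\log t+O(1)$, it yields the lower half of \eqref{Dsimes1}.

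To prove $\mathcal{F}(D)\lesssim 1/t$, I will use the simplified system \eqref{rhoesver2} and show that $\hat{D}=D$ (valid for $t\gg1$ by Proposition \ref{tildeDnagai}) satisfies a differential inequality $\dot{\hat D}\gtrsim \mathcal{F}(\hat D)$ up to $O(\mathcal{F}(D)D^{-6})$. Suppose $\rho_1=\hat D$. Then $\mathcal{F}(\rho_1)\geq \mathcal{F}(\rho_2),\mathcal{F}(\rho_3)$, and
\begin{align*}
\dot\rho_1=2\mathcal{F}(\rho_1)+c_{12}\mathcal{F}(\rho_2)+c_{13}\mathcal{F}(\rho_3)+O\left(\frac{\mathcal{F}(D)}{D^6}\right).
\end{align*}
The difficulty is that $c_{12},c_{13}$ may be close to $-1$, so I need a geometric lower bound. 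Proposition \ref{tildeDnagai} gives $\rho_{12}\geq \rho_1+D^{1/5}$. Writing $\rho_{12}^2=\rho_1^2+\rho_2^2-2\rho_1\rho_2c_{12}$, this is automatic when $c_{12}\le 0$, so it gives no direct bound on $c_{12}$. Instead, I will use the repulsion structure. Since $\mathcal{F}(\rho_j)\le\mathcal{F}(\rho_1)$ and $c_{1j}\ge -1$, we get $\dot\rho_1\geq (2-\#\{j:c_{1j}<0\})\mathcal{F}(\rho_1)$, which is only nonnegative. To get strict positivity I will instead use the Gram constraint: if $c_{12},c_{13}$ are both near $-1$, then $u_2\approx u_3\approx -u_1$. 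In that case $\rho_{23}\ge D+D^{1/5}$ forces $|\rho_2-\rho_3|\gtrsim D^{1/5}$, so one of $\mathcal{F}(\rho_2),\mathcal{F}(\rho_3)$ is $\lesssim e^{-D^{1/5}}\mathcal{F}(D)$ and $\dot\rho_1\geq(1-o(1))\mathcal{F}(\rho_1)$. In all other cases $c_{12}+c_{13}\geq -2+\delta$ gives $\dot\rho_1\ge\delta\mathcal{F}(\rho_1)$ directly. The same applies with any index attaining $\hat D$, so in the Dini sense $\dot{D}\geq c\,\mathcal{F}(D)$.

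This case analysis is the main obstacle. I expect to handle it more cleanly through the potential-type quantity $\mathcal{F}(\rho_1)+\mathcal{F}(\rho_2)+\mathcal{F}(\rho_3)$. Using \eqref{rhoesver2} and $\mathcal{F}'\approx-\mathcal{F}$ from \eqref{tukaeruF2}, its derivative is
\begin{align*}
-\Bigl|\sum_k \mathcal{F}(\rho_k)u_k\Bigr|^2-\sum_k\mathcal{F}(\rho_k)^2,
\end{align*}
and this is $\le -\mathcal{F}(\hat D)^2$ up to errors. That gives $\frac{d}{dt}W\lesssim -W^2$ for $W=\sum_k\mathcal{F}(\rho_k)\sim\mathcal{F}(D)$, and this avoids all angle case distinctions. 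I will adopt this route.

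Integrating $\dot W\le -cW^2+O(W^2D^{-5})$ gives $1/W(t)\geq c t/2$ for $t\gg1$, hence $\mathcal{F}(D)\lesssim W\lesssim 1/t$. Together with \eqref{F(D)lower} this proves \eqref{mathcalFsimes1}, and \eqref{Dsimes1} follows by taking logarithms as described in the first paragraph.
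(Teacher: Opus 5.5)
Your adopted route is correct, and it differs genuinely from the paper's.

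The paper proves the pointwise inequality \eqref{Dodees}, $\dot D\geq \frac{1}{1000}\mathcal{F}(D)$, by contradiction. If $\rho_1=D$ and $\dot D$ were smaller, then necessarily $\rho_2,\rho_3\leq D+D^{\frac12}$ and $c_{12},c_{13}\leq-\frac{99}{100}$. The Gram condition \eqref{Apositive} then forces $c_{23}\geq\frac23$, hence $\rho_{23}<\frac{9}{10}D$, which contradicts Proposition \ref{tildeDnagai}. Integrating \eqref{Dodees} gives $\mathcal{F}(D)\lesssim 1/t$. The case analysis you sketch and then set aside is essentially this argument, with $u_2\approx u_3$ in place of the Gram-determinant step.

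You instead use the gradient structure of \eqref{rhoesver2}. The quantity $W=\sum_k\mathcal{F}(\rho_k)$ is, up to the negligible like-signed terms, the interaction potential $V$. With $F=(\mathcal{F}(\rho_1),\mathcal{F}(\rho_2),\mathcal{F}(\rho_3))$ one has $\dot W=-F\cdot\mathcal{C}F+O(W^2/D)$. The identity behind Lemma \ref{mathcalCpro1} is pure linear algebra, so there is no circularity even though it is stated later. It gives $\dot W\leq-|F|^2+O(W^2/D)\leq-\frac{1}{10}W^2$, using $\hat D=D$ and $\mathcal{F}(D)\leq W\leq 3\mathcal{F}(D)$. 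Then $\frac{d}{dt}W^{-1}\geq\frac{1}{10}$ yields $W\lesssim 1/t$.

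Your route needs no angle case distinctions and no Gram-determinant inequality. It also works with a $C^1$ quantity rather than the (Dini) derivative of the minimum $D$. What it gives up is the pointwise monotonicity $\dot D\gtrsim\mathcal{F}(D)$. The paper obtains that but does not use it afterwards, so nothing is lost for this lemma.

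Two minor corrections. First, the error in $\dot W$ is $O(W^2/D)$ rather than $O(W^2D^{-5})$, because \eqref{tukaeruF2} only yields $\mathcal{F}'(r)=-\mathcal{F}(r)(1+O(r^{-1}))$; this is harmless, since any $o(W^2)$ error suffices. Second, $D\sim\log t$ follows from \eqref{Dupper} combined with your new lower bound, not from \eqref{Dupper} alone.
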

\begin{proof}
By Lemma \ref{limeplem}, we have \eqref{Dupper} and \eqref{F(D)lower}.\ Thus, we prove
\begin{align}
\log{t}-\frac{d-1}{2}\log{(\log{t})}-D(t)\lesssim 1, \label{Dlower}
\\
\mathcal{F}(D(t))\lesssim \frac{1}{t}. \label{F(D)upper}
\end{align}
To prove these, we establish that for $t\gg 1$,
\begin{align}\label{Dodees}
\dot{D}\geq \frac{1}{1000}\mathcal{F}(D).
\end{align}
Assume that there exists $t^{\prime}\gg 1$ such that $D$ satisfies
\begin{align}\label{Dodecontra}
\dot{D}(t^{\prime})< \frac{1}{1000}\mathcal{F}(D(t^{\prime})).
\end{align}
By symmetry, we may assume that $\rho_1=D$. If $\rho_2>D+D^{\frac{1}{2}}$ or $\rho_3>D+D^{\frac{1}{2}}$ holds, then
\begin{align*}
\dot{\rho}_1\geq \mathcal{F}(D)+o\left(\mathcal{F}(D)\right),
\end{align*}
and hence $\rho_2\leq D+D^{\frac{1}{2}}$ and $\rho_3\leq D+D^{\frac{1}{2}}$ must hold. Next, if $c_{12}\geq -\frac{99}{100}$ or $c_{13}\geq -\frac{99}{100}$, then
\begin{align*}
\dot{\rho}_1\geq \frac{1}{100}\mathcal{F}(D)+o\left(\mathcal{F}(D)\right),
\end{align*}
which contradicts \eqref{Dodecontra}. Therefore $c_{12},\ c_{13}$ satisfy
\begin{align*}
-1\leq c_{12}(t^{\prime})\leq -\frac{99}{100},\ -1\leq c_{13}(t^{\prime})\leq -\frac{99}{100}.
\end{align*}
By \eqref{Apositive}, we have $c_{23}(t^{\prime})\geq 0$, and we obtain
\begin{align*}
0&\leq 1+2c_{12}c_{13}c_{23}-c_{12}^2-c_{13}^2-c_{23}^2
\\
&\leq 1+2c_{23}-\frac{19}{10}-c_{23}^2
\\
&\leq -(c_{23}-1)^2+\frac{1}{10}.
\end{align*}
In particular, this inequality implies that $c_{23}\geq \frac{2}{3}$. Then, since we have
\begin{align*}
D\leq \rho_2&\leq D+,D^{\frac{1}{2}},
\\
D\leq \rho_3&\leq D+D^{\frac{1}{2}},
\end{align*}
$\rho_{23}$ satisfies
\begin{align*}
\rho_{23}(t^{\prime})<\frac{9}{10}D(t^{\prime}),
\end{align*}
which contradicts Proposition \ref{tildeDnagai}. Therefore, we obtain \eqref{Dodees}. Then, by \eqref{Dodees}, we obtain  \eqref{Dsimes1} and \eqref{mathcalFsimes1}, and we complete the proof.
\end{proof}
Since we have now determined the size of $D$, we introduce a reference function $L:[0,\infty)\to\mathbb{R}$ by setting $L(0)=1$ and defining it as the solution to
\begin{align}\label{Ldef}
\dot{L}=\mathcal{F}(L).
\end{align}
Then, by direct calculation, there exists $c_{\star}$ depending only on $d,\ \alpha,\ p$ such that for $t\gg 1$
\begin{align}\label{Lesti}
\left|L(t)-\log{t}+\frac{d-1}{2}\log{(\log{t})}-c_{\star}\right|\lesssim \frac{\log{(\log{t})}}{\log{t}}.
\end{align}
Furthermore, by Lemma \ref{Dlogtorder}, $D$ and $L$ satisfy
\begin{align}
\left|L(t)-D(t)\right|\lesssim 1, \label{LDes}
\\
\mathcal{F}(L(t))\sim \mathcal{F}(D(t))\sim \frac{1}{t}. \label{mathcalFLDes}
\end{align}

\subsection{A Lyapunov-function argument}

In this subsection, we use a Lyapunov function to show that, as $t\to\infty$, the quantities $\rho_1,\rho_2,\rho_3$ converge to asymptotic values determined by $c_{12},\ c_{23},\ c_{13}$.
As a preliminary step, we define $b_1,b_2,b_3$ as
\begin{align}\label{bdef}
\begin{aligned}
b_1&=\frac{(2-c_{23})(2+c_{23}-c_{12}-c_{13})}{\mathcal{D}},
\\
b_2&=\frac{(2-c_{13})(2+c_{13}-c_{12}-c_{23})}{\mathcal{D}},
\\
b_3&=\frac{(2-c_{12})(2+c_{12}-c_{23}-c_{13})}{\mathcal{D}},
\end{aligned}
\end{align}
where 
\begin{align}\label{mathcalDdef}
\mathcal{D}(t)=2(4+c_{12}c_{13}c_{23}-c_{12}^2-c_{23}^2-c_{13}^2).
\end{align}
\begin{remark}
The quantities $b_k$ correspond to $e^{-a_k}$ in the following sense:
if we set $a_k$ so that $a_k$ represents the difference between $\rho_k$ and $L$,
and we choose $a_k$ so that the leading part of $\dot{\rho}_k$ vanishes, then $e^{-a_k}$ solves the linear system
\begin{align*}
\begin{pmatrix}
2&c_{12}&c_{13}
\\
c_{12}&2&c_{23}
\\
c_{13}&c_{23}&2
\end{pmatrix}
\begin{pmatrix}
e^{-a_1}
\\
e^{-a_2}
\\
e^{-a_3}
\end{pmatrix}
=
\begin{pmatrix}
1
\\
1
\\
1
\end{pmatrix}
.
\end{align*}
\end{remark}
Furthermore, we define $\mathcal{C}$ as
\begin{align}\label{mathcalCdef}
\mathcal{C}=
\begin{pmatrix}
2&c_{12}&c_{13}
\\
c_{12}&2&c_{23}
\\
c_{13}&c_{23}&2
\end{pmatrix}
.
\end{align}
As a basic property, we have the following.
\begin{lemma}\label{mathcalCpro1}
For any $x\in\mathbb{R}^3$, we have
\begin{align}\label{Cuku}
x\cdot \mathcal{C}x\geq |x|^2.
\end{align}
\end{lemma}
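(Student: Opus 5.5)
We need $x\cdot\mathcal{C}x\ge |x|^2$, i.e. $\mathcal{C}-I$ is positive semidefinite. The plan is to use the Gram structure of the unit vectors rather than compute eigenvalues of a $3\times 3$ symbolic matrix. Observe that
\begin{align*}
\mathcal{C}-I=
\begin{pmatrix}
1&c_{12}&c_{13}
\\
c_{12}&1&c_{23}
\\
c_{13}&c_{23}&1
\end{pmatrix},
\end{align*}
which is exactly the Gram matrix of $u_1,u_2,u_3$ appearing before \eqref{Apositive}, since $c_{jk}=u_j\cdot u_k$ by \eqref{cdef} and $|u_k|=1$ by \eqref{Zrhoudef1}.

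For $x=(x_1,x_2,x_3)\in\mathbb{R}^3$ we then expand
\begin{align*}
x\cdot(\mathcal{C}-I)x=\sum_{j,k=1}^3 x_jx_k\, u_j\cdot u_k=\Bigl|\sum_{k=1}^3 x_ku_k\Bigr|^2\geq 0 .
\end{align*}
Since $x\cdot\mathcal{C}x=x\cdot(\mathcal{C}-I)x+|x|^2$, this gives \eqref{Cuku}. The argument uses only that $u_1,u_2,u_3\in S^{d-1}$, so it holds pointwise in $t$ and needs no dynamical input.

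There is essentially no obstacle; the only thing to check is the identification of the off-diagonal entries $c_{jk}$ with inner products of unit vectors. If one preferred to avoid vectors, an equivalent route is to note that all principal minors of $\mathcal{C}-I$ are nonnegative: the diagonal entries are $1$, the $2\times2$ minors are $1-c_{jk}^2\geq 0$ since $|c_{jk}|\le 1$, and the determinant is $A(t)\ge 0$ by \eqref{Apositive}. However, nonnegativity of all principal minors is a less direct criterion than the identity above, so I would use the Gram expansion.
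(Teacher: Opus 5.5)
Your proof is correct and is essentially the paper's argument: the paper writes $x\cdot \mathcal{C}x=|x|^2+|x_1u_1+x_2u_2+x_3u_3|^2\geq |x|^2$, which is exactly your Gram-matrix expansion of $\mathcal{C}-I$. Your alternative via principal minors is also valid, because you check all principal minors rather than only the leading ones.
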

\begin{proof}
We define $x=(x_1,x_2,x_3)$. Then, we have
\begin{align*}
x\cdot \mathcal{C}x=|x|^2+|x_1u_1+x_2u_2+x_3u_3|^2\geq |x|^2,
\end{align*}
and we obtain \eqref{Cuku}.
\end{proof}
We also have the following lemma.
\begin{lemma}\label{bDes1}
We have for $t\geq 0$
\begin{align}
4\leq \mathcal{D}(t)\leq10, \label{mathcalDes1}
\\
\frac{1}{20}\leq b_1(t)\leq\frac{15}{4}, \label{b1es1}
\\
\frac{1}{20}\leq b_2(t)\leq\frac{15}{4}, \label{b2es1}
\\
\frac{1}{20}\leq b_3(t)\leq\frac{15}{4}. \label{b3es1}
\end{align}
\end{lemma}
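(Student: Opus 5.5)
The plan is to use only the fact that $c_{12},c_{13},c_{23}$ are pairwise inner products of the unit vectors $u_1,u_2,u_3$. Concretely, I will use three inputs: $|c_{jk}|\leq 1$, the Gram determinant inequality \eqref{Apositive}, and the elementary identity $|u_2+u_3-u_1|^2=3+2c_{23}-2c_{12}-2c_{13}\geq 0$ already used in \eqref{naisekihyouka0131}, together with its permutations. No dynamics is needed, so the estimates hold pointwise for every $t\geq 0$.

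\textbf{Bounds on $\mathcal{D}$.} For the upper bound, I will use $c_{12}c_{13}c_{23}\leq 1$ and $c_{12}^2+c_{13}^2+c_{23}^2\geq 0$, which give $\mathcal{D}\leq 2(4+1)=10$. For the lower bound, write $S=c_{12}^2+c_{13}^2+c_{23}^2\leq 3$. Then \eqref{Apositive} reads $2c_{12}c_{13}c_{23}\geq S-1$, hence
\begin{align*}
c_{12}c_{13}c_{23}-S\geq \frac{S-1}{2}-S=-\frac{S+1}{2}\geq -2.
\end{align*}
This yields $\mathcal{D}\geq 2(4-2)=4$. (Equivalently, $\mathcal{D}=\det\mathcal{C}$, and Lemma \ref{mathcalCpro1} already forces $\det\mathcal{C}\geq 1$. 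The direct computation above gives the stated constant $4$.)

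\textbf{Bounds on $b_k$.} By symmetry it suffices to treat $b_1$. Its numerator is the product of two factors.
\begin{itemize}
\item The first factor satisfies $1\leq 2-c_{23}\leq 3$, since $|c_{23}|\leq 1$.
\item The second factor satisfies $2+c_{23}-c_{12}-c_{13}\leq 5$, again from $|c_{jk}|\leq 1$.
\item For a lower bound on the second factor, $|u_2+u_3-u_1|^2\geq 0$ gives $c_{23}-c_{12}-c_{13}\geq -\frac32$, so $2+c_{23}-c_{12}-c_{13}\geq \frac12$.
\end{itemize}
Hence the numerator lies in $[\frac12,15]$. Combining this with $4\leq\mathcal{D}\leq 10$ gives
\begin{align*}
\frac{1}{20}\leq b_1\leq \frac{15}{4}.
\end{align*}
The same argument applied to the vectors $u_1+u_3-u_2$ and $u_1+u_2-u_3$ gives the bounds for $b_2$ and $b_3$.

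The only mildly delicate point is the lower bound $\mathcal{D}\geq 4$, which must use the Gram condition \eqref{Apositive} rather than just $|c_{jk}|\leq1$. Otherwise the cubic term could make $\mathcal{D}$ small, for instance with all $c_{jk}=-1$, which is not realizable by unit vectors.
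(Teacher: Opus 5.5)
Your proof is correct and follows the paper's argument essentially line by line. The paper also gets $\mathcal{D}\le 10$ from $|c_{jk}|\le 1$, and obtains $\mathcal{D}\ge 4$ from the Gram condition by writing $\mathcal{D}=7+A-c_{12}^2-c_{13}^2-c_{23}^2$, which is exactly your inequality $\mathcal{D}\ge 7-S$. It likewise bounds the second factor of $b_1$ below by $\frac12$ via $|u_1-u_2-u_3|^2\ge 0$ and its permutations.
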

\begin{proof}
By \eqref{mathcalDdef} and $-1\leq c_{12},c_{13},c_{23}\leq 1$, we obtain 
\begin{align*}
\mathcal{D}(t)\leq 2(4+1)=10.
\end{align*}
Furthermore, by \eqref{Adef} and \eqref{Apositive}, we have
\begin{align*}
\mathcal{D}=7+A-c_{12}^2-c_{13}^2-c_{23}^2\geq 4.
\end{align*}
Therefore, we obtain \eqref{mathcalDes1}. Next, we prove \eqref{b1es1}. Since we have
\begin{align*}
0\leq |u_1-u_2-u_3|^2=3-2c_{12}-2c_{13}+2c_{23},
\end{align*}
we obtain
\begin{align}\label{naisekies126}
2+c_{23}-c_{12}-c_{13}\geq \frac{1}{2}.
\end{align}
By \eqref{mathcalDes1} and \eqref{naisekies126}, we have
\begin{align*}
b_1(t)\geq \frac{1}{10}(2-1)\frac{1}{2}=\frac{1}{20}.
\end{align*}
On the other hand, since $c_{12},\ c_{23},\ c_{13}$ are bounded, \eqref{mathcalDes1} yields
\begin{align*}
b_1(t)\leq \frac{1}{4}(2+1)(2+1+1+1)=\frac{15}{4},
\end{align*}
and we obtain \eqref{b1es1}. The bounds \eqref{b2es1} and \eqref{b3es1} follow by the same argument, which completes the proof.
\end{proof}
Moreover, from the form of the expressions and \eqref{mathcalDes1}, we have
\begin{align}\label{bkes2}
|\dot{b}_1|+|\dot{b}_2|+|\dot{b}_3|\lesssim \frac{\mathcal{F}(D)}{D}\sim \frac{\mathcal{F}(L)}{L}.
\end{align}
Here, we define $\tilde{a}_1,\tilde{a}_2, \tilde{a}_3$ as 
\begin{align}\label{tildeadef}
e^{-\tilde{a}_k}=b_k.
\end{align}
By definition, $\tilde{a}_k$ are bounded, and moreover for $t$ sufficiently large,
\begin{align}\label{tildeaes1}
|\dot{\tilde{a}}_1|+|\dot{\tilde{a}}_2|+|\dot{\tilde{a}}_3|\lesssim \frac{\mathcal{F}(D)}{D}\sim \frac{\mathcal{F}(L)}{L}.
\end{align}
Using these, we study the evolution of the lengths $\rho_1,\rho_2,\rho_3$. For $k=1,2,3$, we define $a_k$ by
\begin{align}\label{akdef}
a_k=\rho_k-L.
\end{align}
Note that by definition, for $k=1,2,3$ $a_k$ is bounded. Then the following lemma holds.

\begin{lemma}\label{aklemma}
Let $\vec{u}$ be a (1,3)-sign solitary waves. Then we have for $k=1,2,3$ and $t\gg 1$
\begin{align}\label{aes2}
\left|a_k(t)-\tilde{a}_k(t)\right|\lesssim \frac{1}{(\log{t})^{\frac{1}{2}}}.
\end{align}
\end{lemma}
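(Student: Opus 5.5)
We linearize the radial equations around the frozen-angle equilibrium and use a quadratic Lyapunov function in the deviations $a_k-\tilde a_k$. Set $w_k=e^{-a_k}-b_k$, so that $e^{-a_k}=b_k+w_k$. Since $\rho_k=L+a_k$ and the $a_k$ are bounded, \eqref{tukaeruF1} gives $\mathcal{F}(\rho_k)=e^{-a_k}\mathcal{F}(L)+O(\mathcal{F}(L)/L)$. Together with $\dot L=\mathcal{F}(L)$, \eqref{rhoesver2} becomes
\begin{align*}
\dot a_k=\mathcal{F}(L)\bigl((\mathcal{C}e^{-a})_k-1\bigr)+O\Bigl(\frac{\mathcal{F}(L)}{L}\Bigr),
\end{align*}
where $e^{-a}=(e^{-a_1},e^{-a_2},e^{-a_3})$. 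By the remark after \eqref{bdef}, $\mathcal{C}b=(1,1,1)^T$, hence
\begin{align*}
\dot a_k=\mathcal{F}(L)(\mathcal{C}w)_k+O\Bigl(\frac{\mathcal{F}(L)}{L}\Bigr).
\end{align*}

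For the Lyapunov function I take
\begin{align*}
\Phi=\sum_{k=1}^3\Bigl(e^{-a_k}-b_k+b_k(a_k-\tilde a_k)\Bigr),
\end{align*}
that is, $\sum_k b_k\bigl(e^{-(a_k-\tilde a_k)}-1+(a_k-\tilde a_k)\bigr)$. This is nonnegative, vanishes only at $a=\tilde a$, and is comparable to $|a-\tilde a|^2\sim|w|^2$ because $a$, $\tilde a$ are bounded and $b_k\ge 1/20$ by Lemma \ref{bDes1}. Differentiating in $a_k$ gives $\partial\Phi/\partial a_k=-e^{-a_k}+b_k=-w_k$. The explicit time dependence comes through $b_k,\tilde a_k$, which by \eqref{bkes2} and \eqref{tildeaes1} has size $O(\mathcal{F}(L)/L)$. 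Hence
\begin{align*}
\dot\Phi=-\mathcal{F}(L)\,w\cdot\mathcal{C}w+O\Bigl(\frac{\mathcal{F}(L)}{L}\Bigr)\le -\mathcal{F}(L)|w|^2+C\frac{\mathcal{F}(L)}{L},
\end{align*}
using Lemma \ref{mathcalCpro1}. Since $|w|^2\gtrsim\Phi$, this yields $\dot\Phi\le -c\,\mathcal{F}(L)\Phi+C\mathcal{F}(L)/L$.

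Next I change to the clock $s=L(t)$, so that $ds=\mathcal{F}(L)\,dt$. The inequality becomes $d\Phi/ds\le -c\Phi+C/s$. Gronwall then gives
\begin{align*}
\Phi(s)\le e^{-c(s-s_0)}\Phi(s_0)+C\int_{s_0}^s e^{-c(s-\sigma)}\sigma^{-1}\,d\sigma\lesssim\frac{1}{s}\sim\frac{1}{\log t}.
\end{align*}
Therefore $|a_k-\tilde a_k|\lesssim\Phi^{1/2}\lesssim(\log t)^{-1/2}$, which is \eqref{aes2}.

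The main obstacle is controlling the error terms uniformly. There are two places where care is needed.
\begin{itemize}
\item The $O(\mathcal{F}(D)/D^6)$ remainders and the replacement of $\mathcal{F}(D)$ by $\mathcal{F}(L)$ must hold with constants that do not depend on time. This uses $|L-D|\lesssim1$ from \eqref{LDes} and \eqref{mathcalFLDes}.
\item The expansion of $\mathcal{F}(L+a_k)$ relies on \eqref{tukaeruF1} with $|a_k|$ bounded, which is guaranteed by \eqref{Dsimes1}.
\end{itemize}
The remaining point is that the time derivative of $b$ costs only $\mathcal{F}(L)/L$ and not $\mathcal{F}(L)$. This is exactly what \eqref{bkes2} provides, so it is the separation of time scales that makes the argument close.
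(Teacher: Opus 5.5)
Your argument is the paper's proof. Your $\Phi$ is exactly the paper's $\mathcal{L}=\sum_k b_k(e^{-\zeta_k}+\zeta_k-1)$ with $\zeta_k=a_k-\tilde a_k$, your $w$ is its $v$ (since $e^{-a_k}-b_k=b_k(e^{-\zeta_k}-1)$), and the dissipation comes from Lemma \ref{mathcalCpro1} in the same way. Running Gronwall in the clock $s=L(t)$, rather than using $\mathcal{F}(L)\sim t^{-1}$ and $L\sim\log t$ directly, is only a cosmetic difference.

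One justification is wrong, though. \eqref{Dsimes1} controls only $D=\min_k\rho_k$, so it gives $a_k\ge D-L\ge -C$ but no upper bound on $a_k$. The upper bound is genuinely needed in two places:
\begin{enumerate}
\item for $\Phi\sim|a-\tilde a|^2$, and hence $|w|^2\gtrsim\Phi$;
\item for the explicit-time term, which equals $\sum_k\dot b_k(a_k-\tilde a_k)$ and is $O(\mathcal{F}(L)/L)$ only if $a_k-\tilde a_k$ is bounded.
\end{enumerate}
The paper also takes this for granted (``by definition''), but it is easy to supply from \eqref{rhoesver2}. Let $\rho_i=\min_k\rho_k$ and $\rho_j=\max_k\rho_k$, and let $l$ be the third index. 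Then $c_{ij}+c_{jl}=u_j\cdot(u_i+u_l)\le\sqrt{2+2c_{il}}\le 2+c_{il}-\frac12$. Combining this with $\mathcal{F}(\rho_j)\le\mathcal{F}(\rho_l)\le\mathcal{F}(\rho_i)$, and splitting according to the sign of $c_{jl}-c_{il}$, gives
\begin{align*}
\dot\rho_j-\dot\rho_i\le-\frac12\mathcal{F}(\rho_i)+3\mathcal{F}(\rho_j)+O\Bigl(\frac{\mathcal{F}(D)}{D^6}\Bigr).
\end{align*}
Hence $\max_k\rho_k-\min_k\rho_k$ cannot increase once it exceeds a fixed constant. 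Since $\min_k\rho_k=D$ by Proposition \ref{tildeDnagai} and $|D-L|\lesssim1$, this bounds $a_k$ from above, and your argument then closes as written.
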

\begin{proof}
For $k=1,2,3$, we define $\zeta_k$ as
\begin{align}\label{zetadef}
\zeta_k(t)=a_k(t)-\tilde{a}_k(t).
\end{align}
Note that $\zeta_k$ are bounded, and moreover we have
\begin{align*}
\rho_k=L+\tilde{a}_k+\zeta_k.
\end{align*}
By the above estimate and \eqref{tukaeruF1}, we have for $k=1,2,3$ and $t\gg 1$
\begin{align}\label{rhokes126}
\left|\mathcal{F}(\rho_k)-\mathcal{F}(L)e^{-\tilde{a}_k-\zeta_k}\right|\lesssim \frac{\mathcal{F}(L)}{L}.
\end{align}
By \eqref{tildeaes1}, \eqref{rhokes126}, and
\begin{align*}
\begin{pmatrix}
2&c_{12}&c_{13}
\\
c_{12}&2&c_{23}
\\
c_{13}&c_{23}&2
\end{pmatrix}
\begin{pmatrix}
b_1
\\
b_2
\\
b_3
\end{pmatrix}
=
\begin{pmatrix}
1
\\
1
\\
1
\end{pmatrix}
,
\end{align*}
$\zeta_1$ satisfies
\begin{align}\label{zeta1es}
\begin{aligned}
\dot{\zeta}_1&=\dot{\rho}_1-\dot{L}-\dot{\tilde{a}}_1
\\
&=2\mathcal{F}(\rho_1)+\mathcal{F}(\rho_2)c_{12}+\mathcal{F}(\rho_3)c_{13}-\mathcal{F}(L)+O\left(\frac{1}{t\log{t}}\right)
\\
&=\mathcal{F}(L)\left(2e^{-\tilde{a}_1-\zeta_1}+c_{12}e^{-\tilde{a}_2-\zeta_2}+c_{13}e^{-\tilde{a}_3-\zeta_3}-1\right)+O\left(\frac{1}{t\log{t}}\right)
\\
&=\mathcal{F}(L)\left\{2b_1(e^{-\zeta_1}-1)+c_{12}b_2(e^{-\zeta_2}-1)+c_{13}b_3(e^{-\zeta_3}-1)\right\}+O\left(\frac{1}{t\log{t}}\right).
\end{aligned}
\end{align}
A similar computation yields, for $\zeta_2$ and $\zeta_3$,
\begin{align}
\dot{\zeta}_2&=\mathcal{F}(L)\left\{c_{12}b_1(e^{-\zeta_1}-1)+2b_2(e^{-\zeta_2}-1)+c_{23}b_3(e^{-\zeta_3}-1) \right\}+O\left(\frac{1}{t\log{t}}\right), \label{zeta2es}
\\
\dot{\zeta}_3&=\mathcal{F}(L)\left\{c_{13}b_1(e^{-\zeta_1}-1)+c_{23}b_2(e^{-\zeta_2}-1)+2b_3(e^{-\zeta_3}-1) \right\}+O\left(\frac{1}{t\log{t}}\right). \label{zeta3es}
\end{align}
Here, we define $\mathcal{L}$ as
\begin{align}\label{mathcalLdef}
\mathcal{L}(t)=\sum_{k=1}^3 b_k(t)\left(e^{-\zeta_k(t)}+\zeta_k(t)-1\right).
\end{align}
Since $\zeta_k$ are bounded and \eqref{b1es1}, \eqref{b2es1}, \eqref{b3es1} hold, we have
\begin{align}\label{Lsim}
\mathcal{L}(t)\sim |\zeta_1(t)|^2+|\zeta_2(t)|^2+|\zeta_3(t)|^2.
\end{align}
Furthermore, by \eqref{bkes2}, we have
\begin{align}\label{Lbibunes}
\dot{\mathcal{L}}&=\sum_{k=1}^3 b_k\dot{\zeta}_k(1-e^{-\zeta_k})+O\left(\frac{1}{t\log{t}}\right).
\end{align}
We define $v$ as
\begin{align*}
v=
\begin{pmatrix}
v_1
\\
v_2
\\
v_3
\end{pmatrix}
=
\begin{pmatrix}
b_1&0&0
\\
0&b_2&0
\\
0&0&b_3
\end{pmatrix}
\begin{pmatrix}
e^{-\zeta_1}-1
\\
e^{-\zeta_2}-1
\\
e^{-\zeta_3}-1
\end{pmatrix}
.
\end{align*}
Then, we have $|v|\sim |\zeta_1|+|\zeta_2|+|\zeta_3|$. By \eqref{zeta1es}, \eqref{zeta2es}, \eqref{zeta3es}, we obtain
\begin{align}\label{Les2}
\dot{\mathcal{L}}=-\mathcal{F}(L)(v\cdot \mathcal{C}v)+O\left(\frac{1}{t\log{t}}\right).
\end{align}
Therefore, by \eqref{Lsim}, \eqref{Les2}, and Lemma \ref{mathcalCpro1}, there exists $c_1>0$ such that
\begin{align*}
\dot{\mathcal{L}}\leq -\frac{c_1}{t}\mathcal{L}+\frac{1}{c_1t\log{t}}.
\end{align*}
By Gronwall's inequality, we have for $t\gg 1$
\begin{align*}
\mathcal{L}\lesssim \frac{1}{\log{t}}.
\end{align*}
Combining this with \eqref{Lsim} yields \eqref{aes2}.
\end{proof}

\subsection{Long-time behavior of the inner products}
By Lemma \ref{aklemma}, we may estimate $\mathcal{F}(\rho_1),\mathcal{F}(\rho_2),\mathcal{F}(\rho_3)$ in terms of formulas involving inner products.
Using these estimates, we refine the relations among $c_{12},\ c_{13},\ c_{23}$.
Before stating the lemma, we define $\mathcal{N}$ as
\begin{align}\label{Ndef}
\begin{aligned}
\mathcal{N}&=8+4c_{12}c_{23}c_{13}-3c_{12}^2-3c_{23}^2-3c_{13}^2
\\
&\quad-c_{12}c_{13}-c_{12}c_{23}-c_{23}c_{13}+2c_{12}+2c_{23}+2c_{13}.
\end{aligned}
\end{align}
\begin{lemma}\label{naisekiyokusuru}
Let $\vec{u}$ be a $(1,3)$-sign solitary waves. Then, we have for $t\gg 1$
\begin{align}\label{codenew}
\begin{aligned}
\dot{c}_{12}&=\frac{\mathcal{F}(L)}{L}\frac{(1-c_{12})(\mathcal{N}-c_{12}^2-c_{23}c_{13}+2c_{12})}{\mathcal{D}}+O\left(\frac{1}{t(\log{t})^{\frac{3}{2}}}\right),
\\
\dot{c}_{13}&=\frac{\mathcal{F}(L)}{L}\frac{(1-c_{13})(\mathcal{N}-c_{13}^2-c_{12}c_{23}+2c_{13})}{\mathcal{D}}+O\left(\frac{1}{t(\log{t})^{\frac{3}{2}}}\right),
\\
\dot{c}_{23}&=\frac{\mathcal{F}(L)}{L}\frac{(1-c_{23})(\mathcal{N}-c_{23}^2-c_{12}c_{13}+2c_{23})}{\mathcal{D}}+O\left(\frac{1}{t(\log{t})^{\frac{3}{2}}}\right).
\end{aligned}
\end{align}
\end{lemma}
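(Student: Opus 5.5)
The plan is to substitute the asymptotics of Lemma \ref{aklemma} into the angular equations \eqref{cesver2} of Lemma \ref{daijisiki2} and then simplify algebraically. First I reduce the $\rho_k$ and $\mathcal{F}(\rho_k)$ appearing there to expressions in $L$ and the $b_k$.

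By \eqref{akdef}, \eqref{tildeadef} and Lemma \ref{aklemma}, we have $\rho_k=L+\tilde a_k+\zeta_k$ with $\tilde a_k$ bounded and $|\zeta_k|\lesssim(\log t)^{-1/2}$. Together with \eqref{tukaeruF1} and \eqref{tukaeruF2}, this gives
\begin{align*}
\mathcal{F}(\rho_k)=\mathcal{F}(L)\,b_k\bigl(1+O((\log t)^{-\frac12})\bigr).
\end{align*}
For the denominators, $1/\rho_j=L^{-1}(1+O(L^{-1}))$. Since $L\sim\log t$ and $\mathcal{F}(L)\sim t^{-1}$ by \eqref{Lesti} and \eqref{mathcalFLDes}, every term in \eqref{cesver2} takes the form $\frac{\mathcal{F}(L)}{L}\times(\text{bounded expression})$. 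The errors are then $O\bigl(\frac{1}{t\log t}(\log t)^{-1/2}\bigr)=O\bigl(t^{-1}(\log t)^{-3/2}\bigr)$, plus the already smaller $O(\mathcal{F}(D)/D^7)$. This yields
\begin{align*}
\dot c_{12}=\frac{\mathcal{F}(L)}{L}\Bigl[(1-c_{12}^2)(b_1+b_2)+b_3\bigl(c_{23}+c_{13}-c_{12}(c_{13}+c_{23})\bigr)\Bigr]+O\Bigl(\frac{1}{t(\log t)^{3/2}}\Bigr),
\end{align*}
and the analogous formulas for $c_{13}$ and $c_{23}$. Note that the $b_3$ bracket equals $(1-c_{12})(c_{13}+c_{23})$, so $(1-c_{12})$ factors out of the whole expression:
\begin{align*}
\dot c_{12}=\frac{\mathcal{F}(L)}{L}(1-c_{12})\Bigl[(1+c_{12})(b_1+b_2)+(c_{13}+c_{23})b_3\Bigr]+\cdots.
\end{align*}

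What remains is a purely algebraic identity. Substituting the explicit $b_k$ from \eqref{bdef}, I need
\begin{align*}
\mathcal{D}\Bigl[(1+c_{12})(b_1+b_2)+(c_{13}+c_{23})b_3\Bigr]=\mathcal{N}-c_{12}^2-c_{23}c_{13}+2c_{12},
\end{align*}
with $\mathcal{N}$ as in \eqref{Ndef}. The left side is a polynomial of degree at most four in $(c_{12},c_{13},c_{23})$, while the right side is cubic. I would therefore expand $(1+c_{12})\bigl[(2-c_{23})(2+c_{23}-c_{12}-c_{13})+(2-c_{13})(2+c_{13}-c_{12}-c_{23})\bigr]+(c_{13}+c_{23})(2-c_{12})(2+c_{12}-c_{13}-c_{23})$ and compare coefficients monomial by monomial. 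Consistency checks are available: at $c_{12}=c_{13}=c_{23}=c$ both sides must agree, and the expression must be symmetric under $1\leftrightarrow2$.

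The statements for $c_{13}$ and $c_{23}$ follow by permuting indices, since $\mathcal{N}$ and $\mathcal{D}$ are symmetric. This expansion is the step I expect to be the main obstacle, simply because of bookkeeping. If the quartic terms do not cancel directly, I would recheck whether the intended identity uses the relation $\mathcal{C}b=(1,1,1)^T$ to reduce degree. Concretely, I would replace $2b_1+c_{12}b_2+c_{13}b_3=1$ and its companions, writing $(1+c_{12})(b_1+b_2)+(c_{13}+c_{23})b_3$ in terms of $1$ and a single combination of $b$'s before inserting \eqref{bdef}. The analytic part of the argument, controlling the error terms, is routine given Lemma \ref{aklemma}.
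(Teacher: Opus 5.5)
Your proposal is correct and matches the paper's proof step for step: you replace $\mathcal{F}(\rho_j)/\rho_k$ by $b_j\mathcal{F}(L)/L$ with error $O(\mathcal{F}(L)L^{-3/2})$ using Lemma~\ref{aklemma}, \eqref{tukaeruF1}, and \eqref{tukaeruF2}, factor out $(1-c_{12})$, and then check the polynomial identity by direct expansion. One small correction: $\mathcal{D}\bigl[(1+c_{12})(b_1+b_2)+(c_{13}+c_{23})b_3\bigr]$ is only cubic, since each numerator in \eqref{bdef} is quadratic, and all cubic terms except $4c_{12}c_{13}c_{23}$ cancel directly, so the fallback via $\mathcal{C}b=(1,1,1)^T$ is not needed.
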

\begin{proof}
We recall the definition \eqref{zetadef}. Then, by \eqref{akdef}, we have for $k=1,2,3$
\begin{align*}
\rho_k=L+\tilde{a}_k+\zeta_k=L+a_k.
\end{align*}
Then, by \eqref{tukaeruF1}, \eqref{tukaeruF2}, and \eqref{tildeadef}, we have for $k=1,2,3$
\begin{align}\label{rhoksharpes1}
\begin{aligned}
\left| \mathcal{F}(\rho_k)-b_k\mathcal{F}(L)\right|&\leq\left|\mathcal{F}(L+\tilde{a}_k+\zeta_k)-\mathcal{F}(L+\tilde{a}_k)\right|+\left| \mathcal{F}(L+\tilde{a}_k)-e^{-\tilde{a}_k}\mathcal{F}(L)\right|
\\
&\lesssim \frac{\mathcal{F}(L)}{L^{\frac{1}{2}} }.
\end{aligned}
\end{align}
Moreover, we have for $k=1,2,3$
\begin{align}\label{rhoksharpes2}
\left|\frac{1}{\rho_k}-\frac{1}{L}\right|\lesssim \frac{1}{L^2}.
\end{align}
By \eqref{rhoksharpes1} and \eqref{rhoksharpes2}, we have for $j=1,2,3$, $k=1,2,3$
\begin{align}\label{rhoksharpes3}
\left|\frac{\mathcal{F}(\rho_j)}{\rho_k}-\frac{b_j\mathcal{F}(L)}{L}\right|\lesssim \frac{\mathcal{F}(L)}{L^{\frac{3}{2}}}\sim \frac{1}{t(\log{t})^{\frac{3}{2}}}.
\end{align}
Therefore, by \eqref{cesver2} and \eqref{rhoksharpes3}, we obtain
\begin{align}\label{cjkesrev}
\begin{aligned}
\dot{c}_{12}&=\frac{\mathcal{F}(L)}{L}\left\{ (b_1+b_2)(1-c_{12}^2)+b_3(1-c_{12})(c_{23}+c_{13})\right\}+O\left(\frac{1}{t(\log{t})^{\frac{3}{2}}}\right),
\\
\dot{c}_{23}&=\frac{\mathcal{F}(L)}{L}\left\{ (b_2+b_3)(1-c_{23}^2)+b_1(1-c_{23})(c_{12}+c_{13})\right\}+O\left(\frac{1}{t(\log{t})^{\frac{3}{2}}}\right),
\\
\dot{c}_{13}&=\frac{\mathcal{F}(L)}{L}\left\{ (b_1+b_3)(1-c_{13}^2)+b_2(1-c_{13})(c_{12}+c_{23})\right\}+O\left(\frac{1}{t(\log{t})^{\frac{3}{2}}}\right).
\end{aligned}
\end{align}
By symmetry, it suffices to establish \eqref{codenew} for $c_{12}$. By \eqref{bdef}, we have
\begin{align}\label{turaikeisan0}
\begin{aligned}
&\quad (b_1+b_2)(1-c_{12}^2)+b_3(1-c_{12})(c_{23}+c_{13})
\\
&=(1-c_{12})\left\{(1+c_{12})(b_1+b_2)+b_3(c_{23}+c_{13}) \right\}
\\
&=\frac{1-c_{12}}{\mathcal{D}}\{ (1+c_{12})\left( (2-c_{23})(2+c_{23}-c_{12}-c_{13})+(2-c_{13})(2+c_{13}-c_{12}-c_{23})\right)
\\
&\quad+(2-c_{12})(2+c_{12}-c_{23}-c_{13})(c_{23}+c_{13})  \}.
\end{aligned}
\end{align}
Furthermore, by direct computation, we have
\begin{align*}
(2-c_{23})(2+c_{23}-c_{12}-c_{13})&=4-2c_{12}-2c_{13}-c_{23}^2+c_{12}c_{23}+c_{13}c_{23},
\\
(2-c_{13})(2+c_{13}-c_{12}-c_{23})&=4-2c_{12}-2c_{23}-c_{13}^2+c_{12}c_{13}+c_{13}c_{23},
\end{align*}
and therefore, we obtain
\begin{align*}
&\quad (2-c_{23})(2+c_{23}-c_{12}-c_{13})+(2-c_{13})(2+c_{13}-c_{12}-c_{23})
\\
&=8-4c_{12}-2c_{13}-2c_{23}-c_{13}^2-c_{23}^2+2c_{13}c_{23}+c_{12}c_{23}+c_{12}c_{13}.
\end{align*}
Thus, we obtain
\begin{align}\label{turaikeisan1}
\begin{aligned}
&\quad (1+c_{12})\left( (2-c_{23})(2+c_{23}-c_{12}-c_{13})+(2-c_{13})(2+c_{13}-c_{12}-c_{23})\right)
\\
&=(1+c_{12})(8-4c_{12}-2c_{13}-2c_{23}-c_{13}^2-c_{23}^2+2c_{13}c_{23}+c_{12}c_{23}+c_{12}c_{13})
\\
&=8+4c_{12}-2c_{13}-2c_{23}-4c_{12}^2-c_{13}^2-c_{23}^2-c_{12}c_{13}-c_{12}c_{23}+2c_{13}c_{23}
\\
&\quad+c_{12}^2c_{13}+c_{12}^2c_{23}-c_{12}c_{13}^2-c_{12}c_{23}^2+2c_{12}c_{13}c_{23}.
\end{aligned}
\end{align}
Moreover, we have
\begin{align}\label{turaikeisan2}
\begin{aligned}
&\quad (2-c_{12})(2+c_{12}-c_{23}-c_{13})(c_{13}+c_{23})
\\
&=(4-2c_{23}-2c_{13}-c_{12}^2+c_{12}c_{23}+c_{12}c_{13})(c_{13}+c_{23})
\\
&=4c_{13}+4c_{23}-2c_{13}^2-2c_{23}^2-4c_{13}c_{23}
\\
&\quad-c_{12}^2c_{23}-c_{12}^2c_{13}+c_{12}c_{23}^2+c_{12}c_{13}^2+2c_{12}c_{13}c_{23}.
\end{aligned}
\end{align}
By \eqref{turaikeisan1} and \eqref{turaikeisan2}, we obtain
\begin{align}\label{turaikeisan3}
\begin{aligned}
&\quad  (1+c_{12})\left( (2-c_{23})(2+c_{23}-c_{12}-c_{13})+(2-c_{13})(2+c_{13}-c_{12}-c_{23})\right)
\\
&\quad +(2-c_{12})(2+c_{12}-c_{23}-c_{13})(c_{13}+c_{23})
\\
&=8+4c_{12}+2c_{13}+2c_{23}-4c_{12}^2-3c_{13}^2-3c_{23}^2+4c_{12}c_{13}c_{23}
\\
&\quad-2c_{13}c_{23}-c_{12}c_{13}-c_{12}c_{23}
\\
&=\mathcal{N}-c_{12}^2-c_{13}c_{23}+2c_{12}.
\end{aligned}
\end{align}
By \eqref{cjkesrev}, \eqref{turaikeisan0}, and \eqref{turaikeisan3}, $\dot{c}_{12}$ satisfies \eqref{codenew}.
The same computation for $\dot{c}_{13}$ and $\dot{c}_{23}$ yields the corresponding identities in \eqref{codenew}, and the lemma follows.
\end{proof}

\subsection{Rough estimates for lengths and angles}

In this subsection, using Lemma \ref{naisekiyokusuru}, we prove that $c_{12},\ c_{13},\ c_{23}$ converge to $-\frac{1}{2}$ as $t\to\infty$. To this end, we first control the sum of the other two inner products by the largest one.
\begin{lemma}\label{naisekiest127}
We have for $t\geq 0$
\begin{align}\label{naisekies127}
\begin{aligned}
&\quad \mathcal{N}-\left(\max{(c_{12},c_{13},c_{23})}\right)^2+2\max{(c_{12},c_{13},c_{23})}
\\
&\quad-\m{(c_{12},c_{13},c_{23})}\min{(c_{12},c_{13},c_{23})}
\\
&\geq \frac{2}{3}\max{\left(c_{12}+\frac{1}{2},c_{13}+\frac{1}{2},c_{23}+\frac{1}{2}\right)}.
\end{aligned}
\end{align}
\end{lemma}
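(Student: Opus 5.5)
The plan is to treat \eqref{naisekies127} as a purely algebraic inequality for a $3\times3$ Gram matrix. Write $M=\max(c_{12},c_{13},c_{23})$, $m=\m(c_{12},c_{13},c_{23})$, $n=\min(c_{12},c_{13},c_{23})$. Since $\mathcal{N}$ in \eqref{Ndef} is symmetric in the three inner products, the left-hand side depends only on $(M,m,n)$. The only information I will use about $(M,m,n)$ is:
\begin{itemize}
\item $-1\leq n\leq m\leq M\leq 1$;
\item the Gram determinant is nonnegative, $A\geq 0$ by \eqref{Apositive};
\item $|u_1+u_2+u_3|^2=3+2(c_{12}+c_{13}+c_{23})\geq 0$.
\end{itemize}

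\textbf{Step 1: rewrite $\mathcal{N}$ using $A$.} Put $P=c_{12}c_{13}c_{23}$ and $s=c_{12}+c_{13}+c_{23}$. From \eqref{Adef} and \eqref{Ndef},
\begin{align*}
\mathcal{N}=6+2A-\sum c_{jk}^2-\sum_{\text{pairs}} c_{jk}c_{lm}+2s .
\end{align*}
Hence the left-hand side of \eqref{naisekies127} equals
\begin{align*}
2A+6-(M^2+m^2+n^2)-(Mm+Mn+mn)+2s-M^2+2M-mn .
\end{align*}

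\textbf{Step 2: expand around the equilateral point.} At $c_{12}=c_{13}=c_{23}=-\frac12$ both sides of \eqref{naisekies127} vanish, and $A=0$ there. So the natural variables are $x=M+\frac12\geq y=m+\frac12\geq z=n+\frac12$. The constraint $s\geq-\frac32$ becomes $x+y+z\geq 0$, which in particular forces $x\geq 0$.

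I will expand the polynomial part $6-\cdots+2M-mn$ in $(x,y,z)$. The constant term is $0$. The linear term I expect to be a combination of $x,y,z$ that dominates $\frac23 x$ once $x\geq\frac13(x+y+z)\geq 0$ is used. The quadratic part is a negative semidefinite-looking form in $(x,y,z)$.

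\textbf{Step 3: absorb the quadratic part (main obstacle).} The tight case is where the quadratic part is negative, and there the term $2A$ must compensate it. Expanding $A$ around the equilateral point, $A$ equals a linear part proportional to $x+y+z$ plus a quadratic and a cubic part. I would therefore:
\begin{itemize}
\item use $2A\geq 0$ when the polynomial part alone suffices;
\item otherwise use $2A\geq$ its expansion to trade the negative quadratic terms for multiples of $x+y+z\geq0$ and of $x$.
\end{itemize}
The remaining estimate should follow from the box constraints $x,y,z\in[-\frac12,\frac32]$: each quadratic term $\lesssim$ the corresponding linear term times a bounded factor. A case split on the signs of $y$ and $z$ should make each case a one-variable monotonicity check.

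Where the slack is small, the reference point is the boundary case $A=0$ (coplanar $u_k$). There I would parametrize by angles and verify the inequality directly, as a fallback if the polynomial bookkeeping gets messy.
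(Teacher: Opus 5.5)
\textbf{There is a genuine gap in Step 3.} Your Step 1, extracting $2A$ from $\mathcal{N}$, is the same starting point as the paper. Step 3, however, is where the whole proof lives, and as described it cannot work. Replacing $2A$ by its expansion is an identity (it is an exact cubic), so it gains nothing. The inequality is also false on the region you say should suffice, namely the box together with $x+y+z\geq 0$.

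Take $c_{12}=\frac12$, $c_{13}=c_{23}=-1$, i.e.\ $(x,y,z)=(1,-\frac12,-\frac12)$. This point satisfies all box constraints and $x+y+z=0$. Yet $\mathcal{N}=\frac14$, so the left-hand side of \eqref{naisekies127} is $\frac14-\frac14+1-1=0<\frac23$. The point is excluded only because $A=-\frac14<0$. So $A\geq 0$ cannot be used merely as a nonnegative term that ``compensates'' the quadratic part, and no bookkeeping of quadratic against linear terms on the box can succeed. The constraint $A\geq 0$ must instead cut down the domain on which the remaining polynomial is estimated, and your plan never says how. (The true feasible set meets $\{x+y+z=0\}$ only at the origin, since $3+2s=0$ forces $u_1+u_2+u_3=0$.)

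\textbf{The missing idea, which the paper uses.} After dropping $2A\geq 0$, your Step 1 expression depends on $(m,n)$ only through $x'=m+n$:
it is at least $-x'^2+(2-M)x'+6-2M^2+4M$, which is concave in $x'$.

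The constraint $A\geq0$ is then turned into a range for $x'$. Write $A=1-M^2-x'^2+2(1+M)mn$. Using $mn\leq x'^2/4$ and $1+M\geq 0$ gives $0\leq A\leq \frac12(1-M)\bigl(2(1+M)-x'^2\bigr)$. Hence $x'\geq-\sqrt{2(1+M)}$ when $M<1$; the case $M=1$ is trivial. Trivially also $x'\leq 2M$.

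By concavity it suffices to check the two endpoints, using $M\geq-\frac12$ (from $3+2s\geq0$):
\begin{itemize}
\item Lower endpoint: with $\nu=\sqrt{2(1+M)}\in[1,2]$, the value is $\frac12\nu(\nu-1)(6-\nu^2)\geq\nu-1\geq\frac23(M+\frac12)$.
\item Upper endpoint: the value is $(6-4M)(2M+1)\geq M+\frac12$.
\end{itemize}

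Your fallback of checking the coplanar boundary $A=0$ does point at the binding lower endpoint, which is $m=n$ with $A=0$. Without the reduction to a concave function of $m+n$, though, you have no argument that the extremal case lies there.
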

\begin{proof}
By symmetry, it suffices to prove \eqref{naisekies127} in the case $c_{23}\leq c_{13}\leq c_{12}$.
First, in the case $c_{23}\leq c_{13}\leq c_{12}$, we show that
\begin{align}\label{maxes1}
-\sqrt{2(c_{12}+1)}\leq c_{13}+c_{23}\leq 2c_{12}.
\end{align}
The right inequality in \eqref{maxes1} is obvious, so it suffices to prove the left one.
When $c_{12}=1$, it is obvious. When $c_{12}<1$, it follows from
\begin{align*}
0\leq A(t)&=1-c_{12}^2-(c_{13}+c_{23})^2+2(c_{12}+1)c_{13}c_{23}
\\
&\leq 1-c_{12}^2-(c_{13}+c_{23})^2+\frac{(c_{12}+1)(c_{13}+c_{23})^2}{2}
\\
&=\frac{(1-c_{12})\{2(1+c_{12})-(c_{13}+c_{23})^2\}}{2}.
\end{align*}
In this proof, we define $x=c_{13}+c_{23}$. Then, we have $-\sqrt{2(c_{12}+1)}\leq x\leq 2c_{12}$, and \eqref{naisekies127} becomes
\begin{align*}
&\quad \mathcal{N}-\left(\max{(c_{12},c_{13},c_{23})}\right)^2+2\max{(c_{12},c_{13},c_{23})}
\\
&\quad-\m{(c_{12},c_{13},c_{23})}\min{(c_{12},c_{13},c_{23})}
\\
&=\mathcal{N}-c_{12}^2-c_{23}c_{13}+2c_{12}
\\
&=2A+6-2c_{12}^2-c_{23}^2-c_{13}^2-c_{12}(c_{23}+c_{13})-2c_{13}c_{23}+4c_{12}+2c_{13}+2c_{23}
\\
&=2A+6-2c_{12}^2-x^2-c_{12}x+4c_{12}+2x
\\
&\geq -x^2+(2-c_{12})x+6-2c_{12}^2+4c_{12}.
\end{align*}
Viewing the last expression as a quadratic function of $x$, we have
\begin{align}\label{nijikansuuimines0}
\begin{aligned}
&\quad  -x^2+(2-c_{12})x+6-2c_{12}^2+4c_{12}
\\
&\geq \min_{x=-\sqrt{2(c_{12}+1)},2c_{12}}\left(-x^2+(2-c_{12})x+6-2c_{12}^2+4c_{12}\right).
\end{aligned}
 \end{align}
When $x=-\sqrt{2(c_{12}+1)}$, setting $\nu_{12}=\sqrt{2(c_{12}+1)}$ so that $c_{12}=\frac{\nu_{12}^2-2}{2}$, we obtain
 \begin{align}\label{chyouka0213}
 \begin{aligned}
&\quad -x^2+(2-c_{12})x+6-2c_{12}^2+4c_{12}
 \\
 &= -2(c_{12}+1)-(2-c_{12})\sqrt{2(c_{12}+1)}+6-2c_{12}^2+4c_{12}
 \\
 &=-\nu_{12}^2-\left(3-\frac{\nu_{12}^2}{2}\right)\nu_{12}+6-\frac{(\nu_{12}^2-2)^2}{2}+2\nu_{12}^2-4
 \\
 &=\frac{\nu_{12}(\nu_{12}-1)(6-\nu_{12}^2)}{2}.
 \end{aligned}
 \end{align}
Here, we estimate $\nu_{12}$. First, we have
 \begin{align}\label{cpositive}
0\leq  \left|u_1+u_2+u_3\right|^2=3+2c_{12}+2c_{13}+2c_{23}.
\end{align}
By \eqref{cpositive} and $c_{12}=\max{(c_{12},c_{13},c_{23})}$, we have $-\frac{1}{2}\leq c_{12}\leq 1$. Therefore, we have $1\leq \nu_{12}\leq 2$, and we obtain
 \begin{align}\label{chyouka0213-2}
 \begin{aligned}
\frac{\nu_{12}(\nu_{12}-1)(6-\nu_{12}^2)}{2}&\geq \nu_{12}-1
 \\
 &=\frac{2c_{12}+1}{\sqrt{2(c_{12}+1)}+1}
 \\
 &\geq \frac{2}{3}\left(c_{12}+\frac{1}{2}\right).
 \end{aligned}
 \end{align}
By \eqref{chyouka0213} and \eqref{chyouka0213-2}, we obtain
 \begin{align}\label{nijikansuumines1}
 \begin{aligned}
&\quad -\left(-\sqrt{2(c_{12}+1)}\right)^2+(2-c_{12})\left(-\sqrt{2(c_{12}+1)}\right)+6-2c_{12}^2+4c_{12}
\\
&\geq \frac{2}{3}\left(c_{12}+\frac{1}{2}\right).
 \end{aligned}
 \end{align}
Next, when $x=2c_{12}$, we have
\begin{align*}
-x^2+(2-c_{12})x+6-2c_{12}^2+4c_{12}&=-8c_{12}^2+8c_{12}+6
 \\
 &=(6-4c_{12})(2c_{12}+1)
 \\
 &\geq c_{12}+\frac{1}{2},
 \end{align*}
 and hence
 \begin{align}\label{nijikansuumines2}
 -\left(2c_{12}\right)^2+(2-c_{12})\left(2c_{12}\right)+6-2c_{12}^2+4c_{12}\geq c_{12}+\frac{1}{2}.
 \end{align}
 By \eqref{nijikansuuimines0}, \eqref{nijikansuumines1}, and \eqref{nijikansuumines2}, we obtain 
 \begin{align*}
\min_{x=-\sqrt{2(c_{12}+1)},2c_{12}}\left(-x^2+(2-c_{12})x+6-2c_{12}^2+4c_{12}\right)\geq \frac{2}{3}\left(c_{12}+\frac{1}{2}\right).
 \end{align*}
Hence \eqref{naisekies127} holds in the case $c_{23}\leq c_{13}\leq c_{12}$, and we complete the proof.
\end{proof}
Using this lemma, we estimate $c_{12},\ c_{13},\ c_{23}$. Before we estimate $c_{12},\ c_{13},\ c_{23}$, we define $\mathfrak{C}$ as
\begin{align}\label{cookisa}
\mathfrak{C}(t)=\left|c_{12}(t)+\frac{1}{2}\right|+\left|c_{23}(t)+\frac{1}{2}\right|+\left|c_{13}(t)+\frac{1}{2}\right|.
\end{align}
\begin{lemma}\label{0213lemma}
We have for $t\geq 0$,
\begin{align}\label{cnorm}
\frac{1}{4}\mathfrak{C}(t)\leq \max{\left(c_{12}(t)+\frac{1}{2},c_{13}(t)+\frac{1}{2},c_{23}(t)+\frac{1}{2}\right)}\leq \mathfrak{C}(t).
\end{align}
\end{lemma}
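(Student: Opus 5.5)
The plan is to reduce everything to one constraint on the shifted variables $y_{jk}=c_{jk}+\frac12$, namely that their sum is nonnegative, and then treat \eqref{cnorm} as a statement about three real numbers. The only input about the $c_{jk}$ is \eqref{cpositive}: since $|u_1+u_2+u_3|^2=3+2c_{12}+2c_{13}+2c_{23}\geq 0$, we get
\begin{align*}
y_{12}+y_{13}+y_{23}=\frac{3+2c_{12}+2c_{13}+2c_{23}}{2}\geq 0.
\end{align*}
Set $M=\max(y_{12},y_{13},y_{23})$, so that $\mathfrak{C}=|y_{12}|+|y_{13}|+|y_{23}|$. Note that the Gram condition \eqref{Apositive} and the bounds $|c_{jk}|\leq 1$ are not needed.

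For the right inequality, first observe that the sum of the $y_{jk}$ is nonnegative, so at least one of them is nonnegative. Hence $M\geq 0$, and therefore $M=|M|\leq \mathfrak{C}$.

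For the left inequality, split $\mathfrak{C}$ into positive and negative parts. Let $P$ be the sum of the nonnegative $y_{jk}$ and $N$ the sum of the absolute values of the negative ones. Then $\mathfrak{C}=P+N$, and the sum constraint gives $P-N\geq 0$, i.e. $N\leq P$. So $\mathfrak{C}\leq 2P$. Since at most three terms enter $P$ and each is at most $M$, we have $P\leq 3M$. This yields $\mathfrak{C}\leq 6M$, which is not yet good enough.

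To reach the constant $\frac14$, refine by cases on how many $y_{jk}$ are negative.
\begin{itemize}
\item If none is negative, then $\mathfrak{C}=P\leq 3M\leq 4M$.
\item If exactly one is negative, then $P\leq 2M$ and $N\leq P$, so $\mathfrak{C}\leq 2P\leq 4M$.
\item If exactly two are negative, then $P=M$ and $N\leq M$, so $\mathfrak{C}\leq 2M$.
\item All three cannot be negative, by the sum constraint.
\end{itemize}
In every case $\mathfrak{C}\leq 4M$, which is the left inequality.

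There is no real obstacle here. The only point needing care is choosing the right case split so that the constant comes out as $\frac14$ rather than the cruder $\frac16$ from the first estimate.
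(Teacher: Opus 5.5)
Your proof is correct and follows essentially the same route as the paper: both use only $|u_1+u_2+u_3|^2\geq 0$ to get $\sum_{j<k}\left(c_{jk}+\frac12\right)\geq 0$. Both then split according to how many of the shifted inner products are negative, giving $\mathfrak{C}\leq 3M$, $4M$, $2M$ in the zero, one and two negative cases respectively.
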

\begin{proof}
The right-hand inequality in \eqref{cnorm} is obvious. It therefore suffices to prove the left-hand inequality. Since we have
\begin{align*}
0\leq |u_1+u_2+u_3|^2=3+2c_{12}+2c_{13}+2c_{23},
\end{align*}
we have
\begin{align}\label{cwapositive}
\left(c_{12}+\frac{1}{2}\right)+\left(c_{13}+\frac{1}{2}\right)+\left(c_{23}+\frac{1}{2}\right)\geq 0.
\end{align}
By symmetry, we may assume that $c_{23}\leq c_{13}\leq c_{12}$. When $c_{23}\geq -\frac{1}{2}$, we have
\begin{align*}
\mathfrak{C}=\left(c_{12}+\frac{1}{2}\right)+\left(c_{13}+\frac{1}{2}\right)+\left(c_{23}+\frac{1}{2}\right),
\end{align*}
and we obtain
\begin{align*}
\frac{1}{3}\mathfrak{C}\leq \max{\left(c_{12}+\frac{1}{2},c_{13}+\frac{1}{2},c_{23}+\frac{1}{2}\right)}.
\end{align*}
When $c_{23}<-\frac{1}{2}\leq c_{13}$, by \eqref{cwapositive}, we have
\begin{align*}
2\left(c_{12}+\frac{1}{2}\right)&\geq \left(c_{12}+\frac{1}{2}\right)+\left(c_{13}+\frac{1}{2}\right)
\\
&\geq -\left(c_{23}+\frac{1}{2}\right)=\left| c_{23}+\frac{1}{2}\right|. 
\end{align*}
Therefore, we obtain
\begin{align*}
4\left(c_{12}+\frac{1}{2}\right)&\geq \left(c_{12}+\frac{1}{2}\right)+\left(c_{13}+\frac{1}{2}\right)+2\left(c_{12}+\frac{1}{2}\right)\geq \mathfrak{C}.
\end{align*}
When  $c_{23}\leq c_{13}<-\frac{1}{2}\leq c_{12}$, by \eqref{cwapositive}, we have
\begin{align*}
\left(c_{12}+\frac{1}{2}\right)&\geq -\left(c_{13}+\frac{1}{2}\right)-\left(c_{23}+\frac{1}{2}\right)
\\
&=\left|c_{13}+\frac{1}{2}\right|+\left|c_{23}+\frac{1}{2}\right|.
\end{align*}
Thus, we have
\begin{align*}
2\left(c_{12}+\frac{1}{2}\right)\geq \left(c_{12}+\frac{1}{2}\right)+\left|c_{13}+\frac{1}{2}\right|+\left|c_{23}+\frac{1}{2}\right|=\mathfrak{C}.
\end{align*}
Gathering these arguments, we obtain \eqref{cnorm}.
\end{proof}

\begin{lemma}\label{xxx}
Let $\vec{u}$ be a (1,3)-sign solitary waves. Then, we have for $t\gg 1$
\begin{align}\label{naisekrough}
\left|c_{12}(t)+\frac{1}{2}\right|+\left|c_{23}(t)+\frac{1}{2}\right|+\left|c_{13}(t)+\frac{1}{2}\right|\lesssim \frac{1}{(\log{t})^{\frac{1}{2}}}.
\end{align}
\end{lemma}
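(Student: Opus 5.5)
We plan to derive a differential inequality for $M(t)=\max(c_{12}+\frac12,c_{13}+\frac12,c_{23}+\frac12)$, which by Lemma \ref{0213lemma} is comparable to $\mathfrak{C}$. Say $c_{12}$ is the largest inner product at time $t$. The first equation of \eqref{codenew} gives
\begin{align*}
\dot c_{12}=\frac{\mathcal{F}(L)}{L}\frac{(1-c_{12})\bigl(\mathcal{N}-c_{12}^2-c_{23}c_{13}+2c_{12}\bigr)}{\mathcal{D}}+O\Bigl(\frac{1}{t(\log t)^{3/2}}\Bigr).
\end{align*}
Since $c_{12}$ is the maximum, the numerator factor is exactly the left-hand side of \eqref{naisekies127}. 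So Lemma \ref{naisekiest127} bounds it below by $\frac23 M$.

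The sign needs care. By the heuristics (repulsion from $z_0$ spreads the like-signed solitons apart), $M$ should decrease, but the displayed leading term is positive. I will therefore recheck the sign conventions in \eqref{cesver2}. The expected behaviour is $\dot u_1$ pointing away from $u_2,u_3$, since $Z_1$ is pushed by $+\mathcal{F}(\rho_2)u_2$, while $u_1$ is normalized by the larger radial term $2\mathcal{F}(\rho_1)$. I would redo the projection carefully.

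If \eqref{codenew} is correct as stated, then $c_{12}$ increases and the configuration would collapse toward $c=1$. That contradicts Proposition \ref{tildeDnagai}, because $c_{jk}\to1$ with $\rho_j\approx\rho_k$ forces $\rho_{jk}\ll D$. Either way, the working form will be
\begin{align*}
\frac{d}{dt}\max_{jk}c_{jk}\le -\kappa\,\frac{\mathcal{F}(L)}{L}\Bigl(\max_{jk}c_{jk}+\tfrac12\Bigr)(1-\max c_{jk})+O\Bigl(\frac{1}{t(\log t)^{3/2}}\Bigr),
\end{align*}
and I would fix the sign before proceeding. The factor $1-c_{12}$ is bounded below when $M$ is large. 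Near the equilateral point $c_{12}\approx-\frac12$, so $1-c_{12}\approx\frac32$; on the other hand $c_{12}\le 1-\delta$ follows from Proposition \ref{tildeDnagai} together with Lemma \ref{aklemma}, since $\rho_1,\rho_2$ differ by $O(1)$.

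The maximum of three $C^1$ functions is Lipschitz, so we apply the inequality at points of differentiability, using the Dini derivative. This gives
\begin{align*}
\dot M\le -\frac{c}{t\log t}M+\frac{C}{t(\log t)^{3/2}}.
\end{align*}
Here we use $\mathcal{F}(L)/L\sim 1/(t\log t)$ from \eqref{mathcalFLDes} and \eqref{Lesti}. In the variable $s=\log\log t$ this reads $M'\le -cM+Ce^{-s/2}$. Gronwall then yields $M\lesssim e^{-\min(c,1/2)s}$.

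This is the main obstacle: we need $c\ge\frac12$ to obtain exactly $(\log t)^{-1/2}$. Otherwise the rate would be $(\log t)^{-c}$. To get the sharp rate, I would linearize near the equilateral point. There $\mathcal{D}=2(4-\frac18-\frac34)=\frac{25}{4}$, and the coefficient $\frac{(1-c_{12})\cdot\frac23}{\mathcal{D}}$ must be compared against $\frac12$. If the crude bound from Lemma \ref{naisekiest127} is too weak, I would first obtain decay $M\to0$ by the crude argument. I would then compute the linearization of the closed system \eqref{codenew} at $c=-\frac12(1,1,1)$, restricted to the constraint surface $A\ge0$, and show its eigenvalues are at most $-\frac12$ (in units of $\mathcal{F}(L)/L$). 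A quadratic Lyapunov function would then give the bound \eqref{naisekrough}, and Lemma \ref{0213lemma} converts $M$ into $\mathfrak{C}$.
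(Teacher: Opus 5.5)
There is a genuine gap. The sign you want to ``fix'' is correct, and the working inequality on which your whole plan rests points the wrong way. Combined with Lemma \ref{naisekiest127}, \eqref{codenew} gives the \emph{reverse} bound $\dot c_{12}\ge\frac{\mathcal{F}(L)}{L}\frac{2(1-c_{12})}{3\mathcal{D}}M-O\bigl(\frac{\mathcal{F}(L)}{L^{3/2}}\bigr)$ whenever $c_{12}$ is the largest inner product.

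The positive term $(1-c_{12}^2)\mathcal{F}(\rho_2)/\rho_1$ in \eqref{cesver2} has a clear origin. The repulsion of $z_1$ by $z_0$ is radial and does not rotate $u_1$. The recoil of $z_0$ away from $z_2$, however, contributes $+\mathcal{F}(\rho_2)u_2$ to $\dot Z_1$ and turns $u_1$ toward $u_2$. So, seen from $z_0$, the three directions close up, and the equilateral point is a repeller of the angular dynamics, not an attractor.

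Concretely, linearizing \eqref{codenew} at $c_{jk}=-\frac12$ gives $\dot d_{12}\approx\frac{\mathcal{F}(L)}{L}\bigl(\frac{12}{5}d_{12}+\frac95d_{13}+\frac95d_{23}\bigr)$ and its permutations; this is the same matrix as in \eqref{djktotemokitui}. Its eigenvalues are $6$ on $(1,1,1)$ and $\frac35$ on the plane $d_{12}+d_{13}+d_{23}=0$, all positive. A forward Gronwall argument therefore has nothing to work with, and your fallback of showing the linearized eigenvalues are $\le-\frac12$ would fail.

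The missing idea is the remark you made and then discarded: growth of $\max c_{jk}$ is incompatible with Proposition \ref{tildeDnagai}. Made quantitative, this \emph{is} the paper's proof. You correctly derive the a priori bound $\max c_{jk}\le1-\delta$ from Proposition \ref{tildeDnagai} and $|\rho_k-D|\lesssim1$. Together with $\mathcal{D}\le10$, it turns the bound above into $\dot c_{12}\ge\frac{\mathcal{F}(L)}{L}\bigl(\kappa M-CL^{-1/2}\bigr)$ for some $\kappa>0$.

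Now suppose $M(t')>\frac{2C}{\kappa}L^{-1/2}(t')$ at some $t'\gg1$. A continuity argument, using that $L^{-1/2}$ decreases, shows that $M$ stays above this threshold and is increasing on $[t',\infty)$. Hence $\dot M\gtrsim\frac{\mathcal{F}(L)}{L}\sim\frac{1}{t\log t}$ there. This is not integrable, which contradicts the boundedness of $M$. Therefore $M\lesssim L^{-1/2}\sim(\log t)^{-1/2}$ for $t\gg1$, and Lemma \ref{0213lemma} converts this into \eqref{naisekrough}.

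This also removes your concern about the size of the rate constant. The exponent $\frac12$ is just the ratio of the error $\frac{\mathcal{F}(L)}{L^{3/2}}$ to the coefficient $\frac{\mathcal{F}(L)}{L}$, which sets the trapping threshold; no decay rate is involved.
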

\begin{proof}
First, by Lemma \ref{naisekiyokusuru}, there exists $C>0$ such that we have for $t\gg 1$,
\begin{align}\label{naisekibsode}
\begin{aligned}
\left|\dot{c}_{12}-\frac{\mathcal{F}(L)}{L}\frac{(1-c_{12})(\mathcal{N}-c_{12}^2-c_{23}c_{13}+2c_{12})}{\mathcal{D}}\right|&\leq \frac{C\mathcal{F}(L)}{L^{\frac{3}{2}}},
\\
\left|\dot{c}_{23}-\frac{\mathcal{F}(L)}{L}\frac{(1-c_{23})(\mathcal{N}-c_{23}^2-c_{12}c_{13}+2c_{23})}{\mathcal{D}}\right|&\leq \frac{C\mathcal{F}(L)}{L^{\frac{3}{2}}},
\\
\left|\dot{c}_{13}-\frac{\mathcal{F}(L)}{L}\frac{(1-c_{13})(\mathcal{N}-c_{13}^2-c_{12}c_{23}+2c_{13})}{\mathcal{D}}\right|&\leq \frac{C\mathcal{F}(L)}{L^{\frac{3}{2}}}.
\end{aligned}
\end{align}
Moreover, by Lemma \ref{aklemma} we have
\begin{align}\label{nagasayoi}
|\rho_1-D|+|\rho_2-D|+|\rho_3-D|\lesssim 1.
\end{align}
By Proposition \ref{tildeDnagai} and \eqref{nagasayoi}, we have for $t\gg 1$
\begin{align}\label{naisekisankaku}
\max{(c_{12},c_{13},c_{23})}<\frac{1}{3}.
\end{align}
Here, we assume that there exists $t^{\prime}\gg 1$ such that 
\begin{align}\label{naisekiass}
\max{\left(c_{12}(t^{\prime})+\frac{1}{2},c_{13}(t^{\prime})+\frac{1}{2},c_{23}(t^{\prime})+\frac{1}{2}\right)}>\frac{45C}{L^{\frac{1}{2}}(t^{\prime})}.
\end{align}
Then, we introduce the following bootstrap estimate
\begin{align}\label{naisekibs1}
\max{\left(c_{12}(t)+\frac{1}{2},c_{13}(t)+\frac{1}{2},c_{23}(t)+\frac{1}{2}\right)}\geq \frac{45C}{L^{\frac{1}{2}}(t)}.
\end{align}
We define $T_1\in [t^{\prime}, \infty]$ by
\begin{align}\label{bs1sono3}
T_1=\sup{\{t\in[t^{\prime},\infty) \ \mbox{such that \eqref{naisekibs1} holds on} [t^{\prime},t]\}}.
\end{align}
Furthermore, we assume that $T_1<\infty$. Suppose that at $t=T_1$ the maximum is attained by $c_{12}(T_1)$.
Then, by Lemma \ref{naisekiest127}, \eqref{naisekibsode}, and \eqref{naisekisankaku}, at $t=T_1$ we have
\begin{align*}
\dot{c}_{12}&\geq \frac{\mathcal{F}(L)}{L}\frac{(1-c_{12})(\mathcal{N}-c_{12}^2-c_{23}c_{13}+2c_{12})}{\mathcal{D}}- \frac{C\mathcal{F}(L)}{L^{\frac{3}{2}}}
\\
&\geq \frac{\mathcal{F}(L)}{L}\left\{ \frac{1}{10}\left(1-\frac{1}{3}\right)\frac{2}{3}\left(c_{12}+\frac{1}{2}\right)-\frac{C}{L^{\frac{1}{2}}}\right\}
\\
&>0.
\end{align*}
Since $L^{-\frac{1}{2}}$ is decreasing, \eqref{naisekibs1} continues to hold for $0<t-T_1\ll 1$, contradicting the definition of $T_1$. Therefore $T_1=\infty$. In particular, the above computation shows that $\max(c_{12},c_{13},c_{23})$ is monotone increasing, and since it is bounded above by \eqref{naisekisankaku}, it converges as $t\to\infty$ to some $c_{\max}\in\left(-\frac{1}{2},\frac{1}{3}\right]$.
For $t$ sufficiently large, if $c_{12}$ is the maximum, then
\begin{align*}
\dot{c}_{12}\gtrsim \frac{\mathcal{F}(L)}{L}\sim \frac{1}{t\log{t}}.
\end{align*}

From this, taking $T_2\gg 1$ sufficiently large, we obtain
\begin{align*}
1&\gtrsim \max{(c_{12}(e^{T_2}),c_{13}(e^{T_2}),c_{23}(e^{T_2}))}-\max{(c_{12}(T_2),c_{13}(T_2),c_{23}(T_2))}
\\
&\gtrsim \int_{T_2}^{e^{T_2}} \frac{ds}{s\log{s}},
\end{align*}
which is a contradiction. Hence we have for $t\gg 1$
\begin{align}\label{ue0213}
\max{\left(c_{12}(t)+\frac{1}{2},c_{13}(t)+\frac{1}{2},c_{23}(t)+\frac{1}{2}\right)}\lesssim \frac{1}{L^{\frac{1}{2}}(t)}\sim \frac{1}{(\log{t})^{\frac{1}{2}}}.
\end{align}
By Lemma \ref{0213lemma} and \eqref{ue0213}, we obtain \eqref{naisekrough}.
\end{proof}

\section{A sharp equilateral-triangle decomposition}
In this section, we use the results obtained in the previous sections to prove Theorem~\ref{maintheorem}.

\subsection{Sharp estimates for lengths and angles}
In the previous section, we obtained \eqref{naisekrough}. This allows us to refine the estimates for $\rho_1,\rho_2,\rho_3$.
\begin{lemma}\label{rhokrefies}
Let $\vec{u}$ be a (1,3)-sign solitary waves. Then, we have for $t\gg 1$
\begin{align}\label{akrefes}
|a_1|+|a_2|+|a_3|\lesssim \frac{1}{(\log{t})^{\frac{1}{2}}}.
\end{align}
\end{lemma}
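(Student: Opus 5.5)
We write $a_k=\tilde a_k+\zeta_k$ and estimate the two pieces separately. Lemma \ref{aklemma} already gives $|\zeta_k|\lesssim(\log t)^{-1/2}$. It therefore suffices to show $|\tilde a_k|\lesssim(\log t)^{-1/2}$, or equivalently $|b_k-1|\lesssim(\log t)^{-1/2}$, since $\tilde a_k=-\log b_k$ and $b_k$ stays in $[\frac1{20},\frac{15}{4}]$ by Lemma \ref{bDes1}.

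The key observation is that at the equilateral point $c_{12}=c_{13}=c_{23}=-\frac12$ we have $b_1=b_2=b_3=1$. Indeed,
\begin{align*}
\mathcal{D}=2\left(4-\tfrac18-\tfrac34\right)=\tfrac{25}{4},
\qquad
(2-c_{23})(2+c_{23}-c_{12}-c_{13})=\tfrac52\cdot\tfrac52=\tfrac{25}{4}.
\end{align*}
Equivalently, $\mathcal{C}(1,1,1)^{T}=(1,1,1)^{T}$ when every off-diagonal entry equals $-\frac12$. The maps $(c_{12},c_{13},c_{23})\mapsto b_k$ are rational with denominator $\mathcal{D}\ge 4$, by \eqref{mathcalDes1}. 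Hence they are Lipschitz on $[-1,1]^3$ with a uniform constant, and
\begin{align*}
|b_k-1|\lesssim \mathfrak{C}(t)\lesssim \frac{1}{(\log t)^{1/2}}
\end{align*}
by Lemma \ref{xxx}. Because $b_k$ is bounded below, we get $|\tilde a_k|=|\log b_k|\lesssim |b_k-1|$. Summing $|a_k|\le|\tilde a_k|+|\zeta_k|$ over $k$ then yields \eqref{akrefes}.

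There is no real obstacle here. The only points to check are the uniform Lipschitz bound, which holds because $\mathcal{D}\ge4$, and the exact evaluation $b_k=1$ at the symmetric configuration, which is immediate from the linear system in the remark following \eqref{bdef}. The difficult input, namely the $(\log t)^{-1/2}$ convergence of the angles, has already been supplied by Lemma \ref{xxx}.
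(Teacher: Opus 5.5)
Your proof is correct and follows the paper's proof. The paper bounds $|\tilde a_k|\lesssim L^{-1/2}$ from \eqref{bdef}, \eqref{tildeadef} and \eqref{naisekrough}, then adds $|a_k-\tilde a_k|\lesssim(\log t)^{-1/2}$ from Lemma \ref{aklemma}; you have made explicit the facts it leaves implicit, namely $b_k=1$ at $c_{12}=c_{13}=c_{23}=-\frac12$ and $|\log b_k|\lesssim|b_k-1|$. One small correction: $\mathcal{D}\ge4$ holds only for admissible triples with $A\ge0$, not on all of $[-1,1]^3$ (e.g.\ $\mathcal{D}(1,1,-1)=0$), so write the Lipschitz step as $|b_k-1|=|(N_k-\mathcal{D})(c(t))|/\mathcal{D}(c(t))\le\frac14\bigl|(N_k-\mathcal{D})(c(t))-(N_k-\mathcal{D})(-\frac12,-\frac12,-\frac12)\bigr|\lesssim\mathfrak{C}(t)$, with $N_k$ the numerator of $b_k$ in \eqref{bdef}.
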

\begin{proof}
By \eqref{bdef}, \eqref{tildeadef}, and \eqref{naisekrough}, we have
\begin{align}\label{tildearef}
\left|\tilde{a}_1\right|+\left|\tilde{a}_2\right|+\left|\tilde{a}_3\right|\lesssim \frac{1}{L^{\frac{1}{2}}}.
\end{align}
By \eqref{aklemma} and \eqref{tildearef}, we obtain \eqref{akrefes}.
\end{proof}
To quantify the deviations of $c_{12},c_{13},c_{23}$ from $-\frac{1}{2}$, we define $d_{12},d_{13},d_{23}$ as
\begin{align}\label{djkdef}
\begin{aligned}
d_{12}&=c_{12}+\frac{1}{2},
\\
d_{13}&=c_{13}+\frac{1}{2},
\\
d_{23}&=c_{23}+\frac{1}{2}.
\end{aligned}
\end{align}
With this notation, \eqref{naisekrough} can be rewritten as
\begin{align}\label{djkrough}
|d_{12}|+|d_{13}|+|d_{23}|\lesssim \frac{1}{(\log{t})^{\frac{1}{2}}}.
\end{align}
We first obtain a joint estimate for $a_1,a_2,a_3$ and $d_{12},d_{13},d_{23}$.
\begin{lemma}\label{aknaiseki}
Let $\vec{u}$ be a (1,3)-sign solitary waves. Then, we have for $t\gg 1$
\begin{align}\label{kituiakdjkes}
|a_1|+|a_2|+|a_3|+|d_{12}|+|d_{13}|+|d_{23}|\lesssim \frac{1}{(\log{t})^3}.
\end{align}
\end{lemma}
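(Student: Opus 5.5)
We linearize the reduced system of Lemma \ref{daijisiki2} around the equilateral configuration $c_{jk}=-\frac12$, $a_k=0$. We then exploit the separation of time scales: the $a_k$ relax on the fast scale $\int \mathcal{F}(L)\,dt\sim\log t$, while the $d_{jk}$ relax on the slow scale $\int \mathcal{F}(L)/L\,dt\sim \log\log t$. Closing both estimates with a bootstrap gives the bound $(\log t)^{-3}$.

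\textbf{Step 1: linearized angular equation.} At $c_{jk}=-\frac12$ we have $\mathcal{D}=2(4-\frac18-\frac34)=\frac{25}{4}$ and $b_k=\frac{(5/2)(5/2)}{25/4}=1$. The bracket $\mathcal{N}-c_{12}^2-c_{13}c_{23}+2c_{12}$ vanishes there, since the right-hand side of Lemma \ref{naisekiest127} vanishes (I will check this directly). The error term $O(t^{-1}(\log t)^{-3/2})$ in Lemma \ref{naisekiyokusuru} came from $|a_k-\tilde a_k|\lesssim(\log t)^{-1/2}$. I will redo that derivation from \eqref{cesver2}, keeping $\mathcal{F}(\rho_k)=\mathcal{F}(L)e^{-a_k}(1+O(1/L))$. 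This gives
\begin{align*}
\dot d_{12}=\frac{\mathcal{F}(L)}{L}\Big(\ell_{12}(d)+O(|a|+|d|^2)\Big)+O\Big(\frac{\mathcal{F}(L)}{L^2}\Big),
\end{align*}
where $\ell_{12}(d)=\frac{3}{2}\cdot\frac{4}{25}\,\partial(\cdots)\cdot d$ is the linearization. Using the constraint \eqref{cwapositive} ($d_{12}+d_{13}+d_{23}\ge0$) together with Lemma \ref{naisekiest127}, I expect the linear part to be $-\kappa d_{12}+\kappa'(d_{13}+d_{23})$. The key computation is to verify that this $3\times3$ linear map is negative definite, or at least that the Lyapunov quantity $\mathfrak{C}$, or $\sum d_{jk}^2$, decays: $\frac{d}{dt}\sum d_{jk}^2\le -c\frac{\mathcal{F}(L)}{L}\sum d_{jk}^2+\frac{\mathcal{F}(L)}{L}O(|a||d|)+O(\frac{\mathcal{F}(L)}{L^2}|d|)$. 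If negativity fails along the direction $d_{12}=d_{13}=d_{23}$, I will fall back on the Gram constraint \eqref{Apositive} near the equilateral point. There $|u_1+u_2+u_3|^2=2\sum d_{jk}$, and I will treat that direction by a separate monotonicity argument in the style of Lemma \ref{xxx}.

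\textbf{Step 2: fast radial equation.} As in Lemma \ref{aklemma}, linearizing gives $\dot a=-\mathcal{F}(L)\,\mathcal{C}_0 a+\mathcal{F}(L)O(|d|+|a|^2)+O(\mathcal{F}(L)/L)$, where $\mathcal{C}_0$ is positive definite by Lemma \ref{mathcalCpro1}. The source $O(\mathcal{F}(L)/L)$ is the obstacle: it alone yields only $|a|\lesssim 1/L$. To reach $(\log t)^{-3}$ I must expand $\mathcal{F}(L+a)/\mathcal{F}(L)$ more precisely via \eqref{gnozenkin1}. The $1/L$ corrections are common to all $k$: they equal $e^{-a}(1+\beta a/L+\dots)$. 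Hence the nonhomogeneous term of order $\mathcal{F}(L)/L$ is multiplied by $a$ and does not appear as a pure source. The genuine remaining source is the error $O(\mathcal{F}(D)/D^6)$ from Lemma \ref{daijisiki2}, together with $\dot L$ cancellations, all of which are $\lesssim \mathcal{F}(L)L^{-6}$. Gronwall on the fast scale then gives $|a|\lesssim |d|+L^{-6}$ up to exponentially decaying terms in $\log t$.

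\textbf{Step 3: closing.} Substituting $|a|\lesssim|d|+L^{-6}$ into Step 1 gives $\frac{d}{dt}|d|^2\le -c\frac{\mathcal{F}(L)}{L}|d|^2+C\frac{\mathcal{F}(L)}{L^{7}}|d|$. Integrating in $s=\log\log t$ gives decay like $e^{-cs}=(\log t)^{-c}$. To obtain the exponent $3$ I need $c$ large enough, or else a bootstrap in which I improve the exponent repeatedly. This is the main obstacle: the decay rate on the slow scale is fixed by the spectrum of the linearized angular flow. I would first compute that spectrum explicitly. Assuming $\mathcal{F}(L)/L=\frac{1}{t\log t}(1+o(1))$, which follows from \eqref{Ldef}, the eigenvalue $-\kappa$ gives decay $(\log t)^{-\kappa}$, so I must check $\kappa\ge3$. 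If $\kappa<3$, the only alternative is that the error terms force a better rate through an additional exact cancellation that I would have to identify.
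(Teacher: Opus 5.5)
There is a genuine gap: Steps 1 and 3 rest on the assumption that the angular flow contracts toward $c_{jk}=-\frac12$. It actually repels. Expanding \eqref{cesver2} with $\mathcal{F}(\rho_k)=(1-a_k)\mathcal{F}(L)+O\bigl(\mathcal{F}(L)(a_k^2+|a_k|/L)\bigr)$ gives
\begin{align*}
\dot d_{12}=\frac{\mathcal{F}(L)}{L}\Bigl(3d_{12}+\tfrac32 d_{13}+\tfrac32 d_{23}-\tfrac34 a_1-\tfrac34 a_2+\tfrac32 a_3\Bigr)+\text{(errors)},
\end{align*}
so the diagonal coefficient is $+3$, not $-\kappa$. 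This is exactly what Lemma \ref{naisekiest127} encodes: at the largest $c_{jk}$ the bracket is $\geq\frac23(c_{\max}+\frac12)$, so the largest inner product increases. That is the engine of Lemma \ref{xxx}, not a damping.

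You also cannot treat the $a$-terms as an $O(|a|)$ error, since $|a|\sim|d|$. Your own Step 2 slaves $a$ to $d$ at leading order: $a_1\approx\frac15(4d_{12}+4d_{13}+2d_{23})$, and cyclically. Substituting this gives the effective system $\dot d=\frac{\mathcal{F}(L)}{L}Md$ with $M=\frac35 I+\frac95 J$, where $J$ is the all-ones matrix. Its eigenvalues are $6,\frac35,\frac35$, all positive. Hence $\frac{d}{dt}|d|^2\le -c\frac{\mathcal{F}(L)}{L}|d|^2+\cdots$ fails in every direction, not only along $(1,1,1)$. No Gronwall decay on the $\log\log t$ scale is available, and asking whether $\kappa\ge3$ is the wrong question.

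The missing idea is to combine this instability with the a priori fact $|d|\lesssim(\log t)^{-1/2}\to0$ from Lemma \ref{xxx}. Since $d_{12}+d_{13}+d_{23}\ge0$, you have $\max d_{jk}\sim|d|$ by Lemma \ref{0213lemma}, and also $\frac{12}{5}d_{12}+\frac95(d_{13}+d_{23})\ge\frac35 d_{12}$. So at the index realizing the maximum,
\begin{align*}
\dot d_{\max}\ge\frac12\frac{\mathcal{F}(L)}{L}d_{\max}-c_2\frac{\mathcal{F}(L)}{L^4}.
\end{align*}
If $\max d_{jk}$ ever exceeds $CL^{-3}$, a bootstrap shows it stays above $CL^{-3}$ and increases forever. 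It then has a positive limit, contradicting $d\to0$. This gives $|d|\lesssim L^{-3}$, and then $|a|\lesssim L^{-3}$ from the slaving estimate.

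The exponent $3$ is set by the forcing in the $d$-equation, of size $\frac{\mathcal{F}(L)}{L}\cdot L^{-3}$. It comes from the $(\log t)^{-3}$ bound on $a$ minus its slaved value, not from any spectral constant. Note also that Step 1 carries a pure source $O(\mathcal{F}(L)/L^2)$, which would cap any such argument at $|d|\lesssim L^{-1}$. As you observe in Step 2, the $1/L$ corrections in $\mathcal{F}(L+a_k)$ and in $1/\rho_k$ come multiplied by $a_k$. The genuine pure source in the angular equation is only the $O(\mathcal{F}(D)/D^7)$ remainder of \eqref{cesver2}.
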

Before we start the proof, we define
\begin{align*}
|a|&=\sqrt{a_1^2+a_2^2+a_3^2},
\\
|d|&=\sqrt{d_{12}^2+d_{13}^2+d_{23}^2}.
\end{align*}
\begin{proof}
We divide the proof into two steps.

Step 1:  We prove
\begin{align}\label{aknaisekies}
\begin{aligned}
\left|a_1-\frac{1}{5}(4d_{12}+4d_{13}+2d_{23})\right|&\lesssim \frac{|d|}{(\log{t})^{\frac{1}{4}}}+\frac{1}{(\log{t})^3},
\\
\left|a_2-\frac{1}{5}(4d_{12}+2d_{13}+4d_{23})\right|&\lesssim\frac{|d|}{(\log{t})^{\frac{1}{4}}}+\frac{1}{(\log{t})^3},
\\
\left|a_3-\frac{1}{5}(2d_{12}+4d_{13}+4d_{23})\right|&\lesssim \frac{|d|}{(\log{t})^{\frac{1}{4}}}+\frac{1}{(\log{t})^3}.
\end{aligned}
\end{align}

We first refine the estimates for the differential equations satisfied by $a_1,a_2,a_3$. By \eqref{tukaeruF2} and \eqref{akrefes}, we have
\begin{align}\label{akkitu}
\left|\mathcal{F}(\rho_k)-(1-a_k)\mathcal{F}(L)\right|\lesssim \frac{\mathcal{F}(L)|a|}{L^{\frac{1}{2}}}\sim \frac{|a|}{t(\log{t})^{\frac{1}{2}}}.
\end{align}
By \eqref{djkrough} and \eqref{akkitu}, $a_1$ satisfies
\begin{align*}
\dot{a}_1&=\dot{\rho}_1-\dot{L}
\\
&=\left\{2(1-a_1)+c_{12}(1-a_2)+c_{13}(1-a_3)-1 \right\} \mathcal{F}(L)
\\
&\quad+O\left(\frac{|a|}{t(\log{t})^{\frac{1}{2}}}+\frac{1}{t(\log{t})^6}\right),
\\
&=\left(-2a_1+\frac{1}{2}a_2+\frac{1}{2}a_3+d_{12}+d_{13}\right)\mathcal{F}(L)
\\
&\quad+O\left(\frac{|a|}{t(\log{t})^{\frac{1}{2}}}+\frac{1}{t(\log{t})^6}\right).
\end{align*}
By the same calculation, $a_1,a_2,a_3$ satisfy
\begin{align}\label{aodenaiseki}
\begin{aligned}
\dot{a}_1&=\left(-2a_1+\frac{1}{2}a_2+\frac{1}{2}a_3+d_{12}+d_{13}\right)\mathcal{F}(L)
\\
&\quad+O\left(\frac{|a|}{t(\log{t})^{\frac{1}{2}}}+\frac{1}{t(\log{t})^6}\right),
\\
\dot{a}_2&=\left(\frac{1}{2}a_1-2a_2+\frac{1}{2}a_3+d_{12}+d_{23}\right)\mathcal{F}(L)
\\
&\quad+O\left(\frac{|a|}{t(\log{t})^{\frac{1}{2}}}+\frac{1}{t(\log{t})^6}\right),
\\
\dot{a}_3&=\left(\frac{1}{2}a_1+\frac{1}{2}a_2-2a_3+d_{13}+d_{23}\right)\mathcal{F}(L)
\\
&\quad+O\left(\frac{|a|}{t(\log{t})^{\frac{1}{2}}}+\frac{1}{t(\log{t})^6}\right).
\end{aligned}
\end{align}
Next, we derive differential equations for $d_{12},d_{13},d_{23}$ from \eqref{cesver2}. By 
\begin{align*}
\left|\frac{1}{\rho_k}-\frac{1}{L}\right|&\lesssim \frac{|a|}{L^2}
\end{align*}
and \eqref{akkitu}, $d_{12}=c_{12}+\frac{1}{2}$ satisfies
\begin{align*}
\dot{d}_{12}&=\frac{\mathcal{F}(L)}{L}\left\{ (1-c_{12}^2)(2-a_1-a_2)+(1-a_3)(1-c_{12})(c_{13}+c_{23})\right\}
\\
&\quad+O\left(\frac{|a|}{t(\log{t})^{\frac{3}{2}}}+\frac{1}{t(\log{t})^7}\right).
\end{align*}
Furthermore, by direct computation, we have
\begin{align*}
&\quad (1-c_{12}^2)(2-a_1-a_2)+(1-a_3)(1-c_{12})(c_{13}+c_{23})
\\
&=\left(\frac{3}{4}+d_{12}-d_{12}^2\right)(2-a_1-a_2)+(1-a_3)\left(\frac{3}{2}-d_{12}\right)(-1+d_{13}+d_{23})
\\
&=3d_{12}+\frac{3}{2}d_{13}+\frac{3}{2}d_{23}-\frac{3}{4}a_1-\frac{3}{4}a_2+\frac{3}{2}a_3+O(|a|^2+|d|^2).
\end{align*}
From symmetry, we have
\begin{align}\label{djkoderev}
\begin{aligned}
\dot{d}_{12}&=\left( 3d_{12}+\frac{3}{2}d_{13}+\frac{3}{2}d_{23}-\frac{3}{4}a_1-\frac{3}{4}a_2+\frac{3}{2}a_3\right)\frac{\mathcal{F}(L)}{L}
\\
&\quad +O\left(\frac{|a|+|d|}{t(\log{t})^{\frac{3}{2}}}+\frac{1}{t(\log{t})^7}\right),
\\
\dot{d}_{13}&=\left(\frac{3}{2}d_{12}+3d_{13}+\frac{3}{2}d_{23}-\frac{3}{4}a_1+\frac{3}{2}a_2-\frac{3}{4}a_3\right)\frac{\mathcal{F}(L)}{L}
\\
&\quad+O\left(\frac{|a|+|d|}{t(\log{t})^{\frac{3}{2}}}+\frac{1}{t(\log{t})^7}\right),
\\
\dot{d}_{23}&=\left(\frac{3}{2}d_{12}+\frac{3}{2}d_{13}+3d_{23}+\frac{3}{2}a_1-\frac{3}{4}a_2-\frac{3}{4}a_3\right)\frac{\mathcal{F}(L)}{L}
\\
&\quad+O\left(\frac{|a|+|d|}{t(\log{t})^{\frac{3}{2}}}+\frac{1}{t(\log{t})^7}\right).
\end{aligned}
\end{align}
In particular, we have
\begin{align}\label{dbibunes}
|\dot{d}_{12}|+|\dot{d}_{13}|+|\dot{d}_{23}|\lesssim \frac{|a|+|d|}{t\log{t}}+\frac{1}{t(\log{t})^7}.
\end{align}
Here, we define $\xi_1,\xi_2,\xi_3$ as 
\begin{align}\label{xidef}
\begin{aligned}
\xi_1&=a_1-\frac{1}{5}(4d_{12}+4d_{13}+2d_{23}),
\\
\xi_2&=a_2-\frac{1}{5}(4d_{12}+2d_{13}+4d_{23}),
\\
\xi_3&=a_3-\frac{1}{5}(2d_{12}+4d_{13}+4d_{23}),
\\
|\xi|&=\sqrt{\xi_1^2+\xi_2^2+\xi_3^2}.
\end{aligned}
\end{align}
Then, by direct computation, we have
\begin{align}\label{xisim}
\begin{aligned}
|\xi|+|d|&\sim |a|+|d|,
\\
|\xi|^2+|d|^2&\sim |a|^2+|d|^2.
\end{aligned}
\end{align}
By \eqref{aodenaiseki}, \eqref{xidef}, and \eqref{xisim}, we obtain
\begin{align}\label{xiode}
\begin{aligned}
\dot{\xi}_1&=\left(-2\xi_1+\frac{1}{2}\xi_2+\frac{1}{2}\xi_3\right)\mathcal{F}(L)+O\left(\frac{|\xi|+|d|}{t(\log{t})^{\frac{1}{2}}}+\frac{1}{t(\log{t})^6}\right),
\\
\dot{\xi}_2&=\left(\frac{1}{2}\xi_1-2\xi_2+\frac{1}{2}\xi_3\right)\mathcal{F}(L)+O\left(\frac{|\xi|+|d|}{t(\log{t})^{\frac{1}{2}}}+\frac{1}{t(\log{t})^6}\right),
\\
\dot{\xi}_3&=\left(\frac{1}{2}\xi_1+\frac{1}{2}\xi_2-2\xi_3\right)\mathcal{F}(L)+O\left(\frac{|\xi|+|d|}{t(\log{t})^{\frac{1}{2}}}+\frac{1}{t(\log{t})^6}\right).
\end{aligned}
\end{align}
Then, we have
\begin{align*}
\frac{d}{dt}|\xi|^2&=\left(-4\xi_1^2-4\xi_2^2-4\xi_3^2+2\xi_1\xi_2+2\xi_2\xi_3+2\xi_3\xi_1\right)\mathcal{F}(L)
\\
&\quad+O\left(\frac{|\xi|^2+|d|^2}{t(\log{t})^{\frac{1}{2}}}+\frac{1}{t(\log{t})^6}\right).
\end{align*}
Since we have
\begin{align*}
\xi_1^2+\xi_2^2+\xi_3^2\geq \xi_1\xi_2+\xi_2\xi_3+\xi_3\xi_1,
\end{align*}
there exists $c_1>0$ such that we have for $t\gg 1$
\begin{align}\label{xiodees}
\frac{d}{dt}|\xi|^2\leq -\frac{c_1}{t}|\xi|^2+\frac{1}{c_1}\left(\frac{|d|^2}{t(\log{t})^{\frac{1}{2}}}+\frac{1}{t(\log{t})^6}\right).
\end{align}
By Gronwall's inequality and \eqref{dbibunes}, we obtain
\begin{align*}
|\xi|^2\lesssim \frac{|d|^2}{(\log{t})^{\frac{1}{2}}}+\frac{1}{(\log{t})^6},
\end{align*}
which implies \eqref{aknaisekies}.

Step 2: We prove \eqref{kituiakdjkes}.

By \eqref{aknaisekies} and \eqref{djkoderev},  there exists $c_2>0$ such that 
\begin{align}\label{djktotemokitui}
\begin{aligned}
\left|\dot{d}_{12}-\left(\frac{12}{5}d_{12}+\frac{9}{5}d_{13}+\frac{9}{5}d_{23}\right)\frac{\mathcal{F}(L)}{L}\right|&\leq c_2\left(\frac{\mathcal{F}(L)}{L^{\frac{5}{4}}}|d|+\frac{\mathcal{F}(L)}{L^4}\right),
\\
\left|\dot{d}_{13}-\left(\frac{9}{5}d_{12}+\frac{12}{5}d_{13}+\frac{9}{5}d_{23}\right)\frac{\mathcal{F}(L)}{L}\right|&\leq c_2\left(\frac{\mathcal{F}(L)}{L^{\frac{5}{4}}}|d|+\frac{\mathcal{F}(L)}{L^4}\right),
\\
\left|\dot{d}_{23}-\left(\frac{9}{5}d_{12}+\frac{9}{5}d_{13}+\frac{12}{5}d_{23}\right)\frac{\mathcal{F}(L)}{L}\right|&\leq c_2\left(\frac{\mathcal{F}(L)}{L^{\frac{5}{4}}}|d|+\frac{\mathcal{F}(L)}{L^4}\right).
\end{aligned}
\end{align}
Since we have
\begin{align*}
0\leq |u_1+u_2+u_3|^2=3+2c_{12}+2c_{13}+2c_{23},
\end{align*}
we obtain
\begin{align}\label{dwapositive}
d_{12}+d_{13}+d_{23}\geq 0.
\end{align}
Furthermore, by Lemma \ref{0213lemma}, we have
\begin{align}\label{dsim}
\max{(d_{12},d_{13},d_{23})}\sim |d|.
\end{align}
%Then, by considering the possible sign patterns among $d_{12},d_{13},d_{23}$ (i.e., the numbers of positive and negative entries), we obtain
%\begin{align}\label{dsimx}
%\frac{1}{4}|d|\leq \max{(d_{12},d_{13},d_{23})}\leq 3|d|.
%\end{align}
Here, we assume that there exists $t^{\prime}\gg 1$ such that 
\begin{align}\label{djksharpkatei}
\max{(d_{12}(t^{\prime}),d_{13}(t^{\prime}),d_{23}(t^{\prime}))}>\frac{100c_2}{L^3(t^{\prime})}.
\end{align}
Then, we introduce the following bootstrap estimate
\begin{align}\label{dbs}
\max{(d_{12}(t),d_{13}(t),d_{23}(t))}\geq \frac{10c_2}{L^3(t)}.
\end{align}
We define $T_1\in [t^{\prime}, \infty]$ by
\begin{align}\label{T1defdbs}
T_1=\sup{\{t\in[t^{\prime},\infty) \ \mbox{such that \eqref{dbs} holds on} [t^{\prime},t]\}}.
\end{align}
Furthermore, we assume that $T_1<\infty$. Then, we assume
\begin{align*}
d_{12}(T_1)=\max{(d_{12}(T_1),d_{13}(T_1),d_{23}(T_1))}.
\end{align*}
Then, by \eqref{djktotemokitui} and \eqref{dsim}, we have at $t=T_1$
\begin{align*}
\dot{d}_{12}&\geq \left(\frac{12}{5}d_{12}+\frac{9}{5}d_{13}+\frac{9}{5}d_{23}\right)\frac{\mathcal{F}(L)}{L}- c_2\left(\frac{\mathcal{F}(L)}{L^{\frac{5}{4}}}|d|+\frac{\mathcal{F}(L)}{L^4}\right)
\\
&\geq \frac{3}{5}\frac{\mathcal{F}(L)}{L}d_{12}- c_2\left(\frac{\mathcal{F}(L)}{L^{\frac{5}{4}}}|d|+\frac{\mathcal{F}(L)}{L^4}\right)
\\
&\geq \frac{1}{2}\frac{\mathcal{F}(L)}{L}d_{12}-c_2\frac{\mathcal{F}(L)}{L^4}
\\
&>0.
\end{align*}
Since $L^{-3}$ is decreasing, \eqref{dbs} holds for all $t$ with $0<t-T_1\ll 1$, which contradicts the definition of $T_1$. Hence $T_1=\infty$. In this case, $\max{(d_{12},d_{13},d_{23})}$ is bounded and monotone increasing, so it converges to a nonzero limit; this contradicts \eqref{djkrough}. Therefore, we have for $t\gg 1$
\begin{align}\label{djktibi}
\max{(d_{12},d_{13},d_{23})}\lesssim \frac{1}{L^3}.
\end{align}
By \eqref{aknaisekies}, \eqref{dsim} and \eqref{djktibi}, we obtain \eqref{kituiakdjkes}. This completes the proof.

\end{proof}

\subsection{Long-time dynamics of \((1,3)\)-sign solitary waves}
In the previous subsection, we refine the estimates for $\rho_1,\rho_2,\rho_3$. Using these improved bounds, we derive a more precise system of ODEs for $z_0,z_1,z_2,z_3$.
\begin{lemma}\label{Ldehyouka}
Let $\vec{u}$ be a (1,3)-sign solitary waves. Then, $z_0,z_1,z_2,z_3$ satisfy 
\begin{align}\label{(1,3)zdynamicsver3}
\begin{aligned}
\dot{z}_0&=-\frac{\mathcal{F}(L)}{L}(Z_1+Z_2+Z_3)+O\left(\frac{1}{t(\log{t})^{3}}\right),
\\
\dot{z}_1&=\frac{\mathcal{F}(L)}{L}Z_1+O\left(\frac{1}{t(\log{t})^{3}}\right),
\\
\dot{z}_2&=\frac{\mathcal{F}(L)}{L}Z_2+O\left(\frac{1}{t(\log{t})^{3}}\right),
\\
\dot{z}_3&=\frac{\mathcal{F}(L)}{L}Z_3+O\left(\frac{1}{t(\log{t})^{3}}\right).
\end{aligned}
\end{align}
\end{lemma}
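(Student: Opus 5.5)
We start from part (1) of Lemma \ref{daijisiki2}, which already gives
\begin{align*}
\dot{z}_k=\mathcal{F}(\rho_k)u_k+O\left(\frac{\mathcal{F}(D)}{D^6}\right)\quad (k=1,2,3),\qquad
\dot{z}_0=-\sum_{k=1}^3\mathcal{F}(\rho_k)u_k+O\left(\frac{\mathcal{F}(D)}{D^6}\right).
\end{align*}
By \eqref{mathcalFLDes} and \eqref{LDes}, the error satisfies $\mathcal{F}(D)/D^6\lesssim 1/(t(\log t)^6)$, which is well inside the target $O(1/(t(\log t)^3))$. It therefore remains only to replace $\mathcal{F}(\rho_k)u_k$ by $\frac{\mathcal{F}(L)}{L}Z_k$ with an error $O(1/(t(\log t)^3))$.

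Since $Z_k=\rho_k u_k$, we write
\begin{align*}
\mathcal{F}(\rho_k)u_k-\frac{\mathcal{F}(L)}{L}Z_k=\left(\mathcal{F}(\rho_k)-\frac{\rho_k}{L}\mathcal{F}(L)\right)u_k
=\left(\mathcal{F}(L+a_k)-\mathcal{F}(L)-\frac{a_k}{L}\mathcal{F}(L)\right)u_k,
\end{align*}
using $\rho_k=L+a_k$ from \eqref{akdef}. Since $|u_k|=1$, it suffices to bound the scalar factor.

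Lemma \ref{aknaiseki} gives $|a_k|\lesssim(\log t)^{-3}$, so in particular $|a_k|\ll 1$. Then \eqref{tukaeruF2} yields
\begin{align*}
|\mathcal{F}(L+a_k)-\mathcal{F}(L)|\lesssim \mathcal{F}(L)|a_k|+\mathcal{F}(L)\left(a_k^2+\frac{|a_k|}{L}\right)\lesssim \mathcal{F}(L)|a_k|.
\end{align*}
The remaining term is smaller: $|a_k|\mathcal{F}(L)/L\lesssim \mathcal{F}(L)|a_k|$. Combining this with $\mathcal{F}(L)\sim 1/t$ from \eqref{mathcalFLDes}, the scalar factor is $O\big(1/(t(\log t)^3)\big)$. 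Summing over $k$ gives the $z_0$ equation, and each single term gives the $z_k$ equation, which proves \eqref{(1,3)zdynamicsver3}.

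The main point is not the computation but the input: the sharp bound $|a_k|\lesssim(\log t)^{-3}$ from Lemma \ref{aknaiseki} is what makes the error $O(1/(t(\log t)^3))$. The earlier bound \eqref{akrefes} would only give $O(1/(t(\log t)^{1/2}))$. We never expand $\mathcal{F}$ to second order; the crude estimate $|\mathcal{F}(L+a)-\mathcal{F}(L)|\lesssim|a|\mathcal{F}(L)$ already suffices, since $\frac{\mathcal{F}(L)}{L}Z_k$ merely rewrites $\mathcal{F}(L)u_k$ up to the same error size.
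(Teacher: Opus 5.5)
Your proposal is correct and follows the paper's proof: you substitute the sharp bound $|a_k|\lesssim(\log t)^{-3}$ from Lemma \ref{aknaiseki}, via \eqref{tukaeruF2}, into part (1) of Lemma \ref{daijisiki2}. The only cosmetic difference is that you merge everything into the single scalar factor using $\rho_k/L=1+a_k/L$, whereas the paper bounds $\mathcal{F}(\rho_k)-\mathcal{F}(L)$ and $1/\rho_k-1/L$ separately before substituting.
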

\begin{proof}
Applying \eqref{tukaeruF2} and Lemma \ref{aknaiseki} to Lemma \ref{daijisiki2}, we obtain for $k=1,2,3$
\begin{align*}
\left|\mathcal{F}(\rho_k)-\mathcal{F}(L)\right|\lesssim \frac{1}{t(\log{t})^3},\ \left|\frac{1}{\rho_k}-\frac{1}{L}\right|\lesssim \frac{1}{(\log{t})^5}.
\end{align*}
Substituting these bounds into the equations in Lemma \ref{daijisiki2} yields \eqref{(1,3)zdynamicsver3}.
\end{proof}

Here, we analyze the long-time behavior of $z_0,z_1,z_2,z_3$.
We first estimate the barycenter of $z_0,z_1,z_2,z_3$ using Lemma \ref{Ldehyouka}.
\begin{lemma}\label{jusines}
Let $\vec{u}$ be a (1,3)-sign solitary waves. Then, there exists $z_g\in \mathbb{R}^d$ such that we have for $t\gg 1$
\begin{align}\label{jusines1}
\left|\frac{z_0(t)+z_1(t)+z_2(t)+z_3(t)}{4}-z_g\right|\lesssim \frac{1}{(\log{t})^2}.
\end{align}
\end{lemma}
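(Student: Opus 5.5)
Summing the four equations in \eqref{(1,3)zdynamicsver3} of Lemma \ref{Ldehyouka}, the leading terms cancel exactly:
\begin{align*}
-\frac{\mathcal{F}(L)}{L}(Z_1+Z_2+Z_3)+\frac{\mathcal{F}(L)}{L}Z_1+\frac{\mathcal{F}(L)}{L}Z_2+\frac{\mathcal{F}(L)}{L}Z_3=0 .
\end{align*}
So if we set $g(t)=\frac{1}{4}(z_0+z_1+z_2+z_3)$, we get
\begin{align*}
\dot g(t)=O\left(\frac{1}{t(\log t)^3}\right)\qquad\mbox{for } t\gg1.
\end{align*}
This is the only dynamical input; the rest is integration.

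Since $\frac{1}{t(\log t)^3}$ is integrable on $[T,\infty)$ for $T\gg1$, with $\int_t^\infty \frac{ds}{s(\log s)^3}=\frac{1}{2(\log t)^2}$, the function $g$ is Cauchy as $t\to\infty$. Hence the limit $z_g=\lim_{t\to\infty}g(t)$ exists. Writing
\begin{align*}
g(t)-z_g=-\int_t^\infty \dot g(s)\,ds ,
\end{align*}
we obtain $|g(t)-z_g|\lesssim (\log t)^{-2}$, which is \eqref{jusines1}.

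The main point to check is that the implicit constants in the $O$-terms of Lemma \ref{Ldehyouka} are uniform for $t\ge T_0$ with a fixed $T_0$. This holds because they come from Lemma \ref{aknaiseki} and \eqref{mathcalFLDes}, which are valid for all $t\gg1$. The cancellation itself is exact at leading order, so no further step is needed.
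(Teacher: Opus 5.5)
Your proof is correct and is essentially the paper's argument: summing the four equations of Lemma \ref{Ldehyouka} cancels the leading terms exactly, leaving $O\bigl(\frac{1}{t(\log t)^3}\bigr)$, which is integrable. The tail integral $\int_t^\infty \frac{ds}{s(\log s)^3}=\frac{1}{2(\log t)^2}$ then gives both the existence of $z_g$ and the $(\log t)^{-2}$ rate.
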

\begin{proof}
By Lemma \ref{Ldehyouka}, we have for $t\gg 1$
\begin{align*}
\left|\frac{\dot{z}_0+\dot{z}_1+\dot{z}_2+\dot{z}_3}{4}\right|\lesssim \frac{1}{t(\log{t})^3}.
\end{align*}
Since the right-hand side is integrable, there exists $z_g\in\mathbb{R}^d$ such that
$\frac{z_0+z_1+z_2+z_3}{4}$ converges to $z_g$ as $t\to\infty$. In particular, we have for $t\gg 1$
\begin{align*}
\left|\frac{z_0(t)+z_1(t)+z_2(t)+z_3(t)}{4}-z_g\right|&\lesssim \int_t^{\infty} \frac{ds}{s(\log{s})^3}
\\
&\lesssim \frac{1}{(\log{t})^2}.
\end{align*}
Therefore, we obtain \eqref{jusines1}.
\end{proof}
Next, we prove the convergence of $z_0$. The idea, as in \cite[Lemma 4.8]{I2}, is to argue that if $Z_1+Z_2+Z_3$ did not converge, then its magnitude would grow and eventually become much larger than $\rho_{12},\rho_{13},\rho_{23}$, leading to a contradiction.
\begin{lemma}\label{z0ugokanai}
Let $\vec{u}$ be a (1,3)-sign solitary waves. Then, there exists $z_{\infty}\in \mathbb{R}^d$ such that we have for $t\gg 1$
\begin{align}\label{z0es128}
|z_0(t)-z_{\infty}|\lesssim \frac{1}{(\log{t})^{2}}.
\end{align}
\end{lemma}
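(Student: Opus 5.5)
Set $S=Z_1+Z_2+Z_3$. By Lemma \ref{Ldehyouka} we have $\dot z_0=-\frac{\mathcal F(L)}{L}S+O\bigl(\frac{1}{t(\log t)^3}\bigr)$. The proof therefore reduces to showing that $|S|$ is very small, and then integrating.

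\textbf{Step 1: an ODE for $S$.} Subtracting the equations of Lemma \ref{Ldehyouka} gives
\begin{align*}
\dot Z_k=\frac{\mathcal F(L)}{L}(Z_k+S)+O\Bigl(\frac{1}{t(\log t)^3}\Bigr).
\end{align*}
Summing over $k=1,2,3$ yields
\begin{align*}
\dot S=4\frac{\mathcal F(L)}{L}S+O\Bigl(\frac{1}{t(\log t)^3}\Bigr).
\end{align*}
This is a repulsive linear equation with rate $\frac{4\mathcal F(L)}{L}\sim\frac{4}{t\log t}$. Homogeneous solutions grow like $(\log t)^{4+o(1)}$, since $\int^t\frac{4\mathcal F(L)}{L}\,ds\approx 4\log\log t$ by \eqref{mathcalFLDes}.

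\textbf{Step 2: a priori bound from geometry.} I claim $|S|\lesssim (\log t)^{-2}$. The bound $|S|\lesssim L\cdot|u_1+u_2+u_3|+|a|$ holds because $Z_k=\rho_k u_k=(L+a_k)u_k$. Moreover $|u_1+u_2+u_3|^2=3+2(c_{12}+c_{13}+c_{23})=2(d_{12}+d_{13}+d_{23})$. Lemma \ref{aknaiseki} then gives $|u_1+u_2+u_3|\lesssim(\log t)^{-3/2}$, so $|S|\lesssim (\log t)^{-1/2}$. This is not enough directly. I would instead use the growth of the ODE to bootstrap, following the idea of \cite[Lemma 4.8]{I2}.

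Set $\Phi(t)=\exp\bigl(-\int_{T}^t\frac{4\mathcal F(L)}{L}ds\bigr)\sim (\log t)^{-4}$, up to constants absorbed via \eqref{Lesti} and \eqref{tukaeruF2}. Then
\begin{align*}
\frac{d}{dt}(\Phi S)=O\Bigl(\frac{\Phi}{t(\log t)^3}\Bigr)=O\Bigl(\frac{1}{t(\log t)^7}\Bigr),
\end{align*}
so $\Phi S$ converges to some limit $\ell$. If $\ell\neq0$, then $|S|\gtrsim(\log t)^{4}$, which contradicts $|S|\lesssim(\log t)^{-1/2}$. Hence $\ell=0$, and therefore
\begin{align*}
|S(t)|\le\Phi(t)^{-1}\int_t^\infty O\Bigl(\frac{1}{s(\log s)^7}\Bigr)ds\lesssim (\log t)^4(\log t)^{-6}=(\log t)^{-2}.
\end{align*}

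The main obstacle is making the integrating factor comparison precise. $\Phi$ is only comparable to $(\log t)^{-4}$ up to bounded multiplicative constants. This follows since $\frac{\mathcal F(L)}{L}=\frac{\dot L}{L}$ by \eqref{Ldef}, so $\Phi=(L(T)/L(t))^4$ exactly, and $L\sim\log t$.

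\textbf{Step 3: conclusion.} Now $|\dot z_0|\lesssim\frac{1}{t\log t}\cdot(\log t)^{-2}+\frac{1}{t(\log t)^3}\lesssim\frac{1}{t(\log t)^3}$. This is integrable, so $z_0(t)\to z_\infty$, and
\begin{align*}
|z_0(t)-z_\infty|\lesssim\int_t^\infty\frac{ds}{s(\log s)^3}\lesssim(\log t)^{-2},
\end{align*}
which is \eqref{z0es128}.
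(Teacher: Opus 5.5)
Your proof is correct, but it excludes the growing mode and identifies $z_\infty$ differently from the paper. Both arguments start from the same equation $\dot W=\frac{4\mathcal F(L)}{L}W+O\bigl(\frac{1}{t(\log t)^3}\bigr)$ for $W=Z_1+Z_2+Z_3$.

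The paper then runs a bootstrap. If $|W(t')|>C_1L^{-2}(t')$ at some time, then $|W|$ stays above this threshold, is increasing, and must tend to infinity (a finite limit is ruled out by $\int^{\infty}\frac{ds}{s\log s}=\infty$). It then compares $|W|$ with the perimeter $\mathcal R=\rho_{12}+\rho_{13}+\rho_{23}$, which satisfies $\dot{\mathcal R}=\frac{\mathcal F(L)}{L}\mathcal R+O\bigl(\frac{1}{t(\log t)^3}\bigr)$. This gives $\mathcal R/|W|\to 0$, which contradicts the elementary bound $|W|\le 7\mathcal R$. Having $|W|\lesssim L^{-2}$, the paper takes $z_\infty$ to be the limit $z_g$ of the barycenter from Lemma~\ref{jusines} and writes $z_0=\frac{z_0+z_1+z_2+z_3}{4}-\frac{W}{4}$.

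You instead observe that $\frac{\mathcal F(L)}{L}=\frac{\dot L}{L}$, so the integrating factor is exactly $(L(T)/L(t))^4$ and the problem becomes a linear Duhamel formula. The a priori size bound forces the homogeneous coefficient $\ell$ to vanish. Integrating from infinity gives $|S|\lesssim(\log t)^{-2}$ directly, and you integrate $\dot z_0$ without passing through the barycenter. Your version is more explicit and quantitative, and it avoids both the threshold bootstrap and the auxiliary quantity $\mathcal R$.

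One remark: Step 2 uses more than it needs. To kill $\ell$ it suffices that $|S|=o\bigl((\log t)^4\bigr)$. The trivial bound $|S|\le\rho_1+\rho_2+\rho_3\lesssim\log t$ (since $\rho_k=L+a_k$ with $a_k$ bounded) already gives this, so Lemma~\ref{aknaiseki} is not needed there. This is exactly the mechanism behind the paper's comparison of $|W|$ with $\mathcal R$: the unstable mode grows like $L^4$, while the configuration scale grows only like $L$. So the two proofs rest on the same idea, with yours packaged linearly.
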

\begin{proof}
First, rewriting Lemma \ref{Ldehyouka} in terms of $Z_1,Z_2,Z_3$, we obtain
\begin{align}\label{Zkei}
\begin{aligned}
\dot{Z}_1&=\frac{\mathcal{F}(L)}{L}(2Z_1+Z_2+Z_3)+O\left(\frac{1}{t(\log{t})^{3}}\right),
\\
\dot{Z}_2&=\frac{\mathcal{F}(L)}{L}(Z_1+2Z_2+Z_3)+O\left(\frac{1}{t(\log{t})^{3}}\right),
\\
\dot{Z}_3&=\frac{\mathcal{F}(L)}{L}(Z_1+Z_2+2Z_3)+O\left(\frac{1}{t(\log{t})^{3}}\right).
\end{aligned}
\end{align}
Moreover, we have for $t\gg 1$
\begin{align*}
\dot{\rho}_{12}&=\frac{\mathcal{F}(L)}{L}\rho_{12}+O\left(\frac{1}{t(\log{t})^{3}}\right),
\\
\dot{\rho}_{13}&=\frac{\mathcal{F}(L)}{L}\rho_{13}+O\left(\frac{1}{t(\log{t})^{3}}\right),
\\
\dot{\rho}_{23}&=\frac{\mathcal{F}(L)}{L}\rho_{23}+O\left(\frac{1}{t(\log{t})^{3}}\right).
\end{align*}
Here, we define $\mathcal{R}$ as 
\begin{align}\label{Rdef}
\mathcal{R}=\rho_{12}+\rho_{13}+\rho_{23}.
\end{align}
Then, we have for $t\gg 1$
\begin{align}\label{Rkei}
\dot{\mathcal{R}}=\frac{\mathcal{F}(L)}{L}\mathcal{R}+O\left(\frac{1}{t(\log{t})^{3}}\right).
\end{align}
Furthermore, we define $W=Z_1+Z_2+Z_3$ and $\mathcal{W}=|W|$. Then, $W$ satisfies
\begin{align*}
\dot{W}=\frac{4\mathcal{F}(L)}{L}W+O\left(\frac{1}{t(\log{t})^{3}}\right).
\end{align*}
Therefore, there exists $C_1>1$ such that we have for $t\gg 1$
\begin{align}\label{Wkei}
\left|\dot{\mathcal{W}}-\frac{4\mathcal{F}(L)}{L}\mathcal{W}\right|\leq \frac{C_1\mathcal{F}(L)}{L^3}.
\end{align}
Here, we assume that there exists $t^{\prime}>0$ such that 
\begin{align}\label{dekaikatei}
\mathcal{W}(t^{\prime})>\frac{C_1}{L^2(t^{\prime})}.
\end{align}
Then, we introduce the following bootstrap estimate
\begin{align}\label{Wbs}
\mathcal{W}(t)>\frac{C_1}{L^2(t)}.
\end{align}
We define $T_1\in [t^{\prime}, \infty]$ by
\begin{align}\label{bs1}
T_1=\sup{\{t\in[t^{\prime},\infty) \ \mbox{such that \eqref{Wbs} holds on}\ [t^{\prime},t]\}}.
\end{align}
Furthermore, we assume that $T_1<\infty$. Then, we have $\mathcal{W}(T_1)=\frac{C_1}{L^2(T_1)}$. Furthermore, by \eqref{Wkei}, we have
\begin{align*}
\dot{\mathcal{W}}(T_1)\geq \frac{4\mathcal{F}(L(T_1))}{L(T_1)}\mathcal{W}(T_1)-\frac{C_1\mathcal{F}(L(T_1))}{L^3(T_1)}>0.
\end{align*}
Moreover, since $\frac{C_1}{L^2}$ is monotone decreasing, the estimate \eqref{Wbs} also holds for $0<t-T_1\ll 1$, which contradicts the maximality of $T_1$.
Hence $T_1=\infty$. Then, since $\mathcal{W}$ is monotone increasing for $t>t'$, if $\mathcal{W}(t)$ did not tend to $+\infty$ as $t\to\infty$, then $\mathcal{W}$ would converge.
In that case, for $1\ll T_2$ we would have
\begin{align*}
1\gtrsim \mathcal{W}(e^{T_2})-\mathcal{W}(T_2)\gtrsim \int_{T_2}^{e^{T_2}}\frac{dt}{t\log t},
\end{align*}
which is impossible. Therefore, $\lim_{t\to\infty}\mathcal{W}(t)=\infty$. Here we define $\mathcal{X}$ as 
\begin{align}\label{Xdef}
\mathcal{X}=\frac{\mathcal{R}}{\mathcal{W}}.
\end{align}
Then, we have
\begin{align*}
\dot{\mathcal{X}}&=\frac{1}{\mathcal{W}^2}\left(\dot{\mathcal{R}}\mathcal{W}-\dot{\mathcal{W}}\mathcal{R} \right)
\\
&=-\frac{3\mathcal{F}(L)}{L}\frac{\mathcal{R}}{\mathcal{W}}+O\left(\frac{\mathcal{W}+\mathcal{R}}{\mathcal{W}^2t(\log{t})^3}\right)
\\
&=-\frac{3\mathcal{F}(L)}{L}\mathcal{X}+O\left(\frac{1+\mathcal{X}}{t(\log{t})^3}\right),
\end{align*}
which implies 
\begin{align}\label{Xinfty}
\lim_{t\to\infty}\mathcal{X}(t)=0.
\end{align}
On the other hand, by Proposition \ref{tildeDnagai}, we have for $t\gg 1$
\begin{align*}
\mathcal{W}&=|z_1+z_2+z_3-3z_0|
\\
&=\left|(z_1-z_2)+2(z_2-z_3)+3(z_3-z_1)+3(z_1-z_0)\right|
\\
&\leq 3\mathcal{R}+3\rho_1
\\
&\leq 7\mathcal{R},
\end{align*}
which contradicts \eqref{Xinfty}. Thus, we obtain
\begin{align*}
\mathcal{W}=|z_1+z_2+z_3-3z_0|\lesssim \frac{1}{L^2}\sim \frac{1}{(\log{t})^2}.
\end{align*}
Then, defining $z_{\infty}=z_g$, by \eqref{jusines1} we obtain \eqref{z0es128}.

\end{proof}

\subsection{Proof of Theorem \ref{maintheorem}}

We now prove Theorem \ref{maintheorem}. First, we establish the following lemma.
\begin{lemma}\label{modsitenai}
Let $\vec{u}$ be a (1,3)-sign solitary waves. Then, there exist $z_{\infty}\in \mathbb{R}^d$ and equilateral triple $\omega_1,\omega_2,\omega_3\in S^{d-1}$ such that we have for $k=1,2,3$ and $t\gg 1$
\begin{align}
\left|z_k(t)-z_{\infty}-\omega_k\left(\log{t}-\frac{d-1}{2}\log{(\log{t})}+c_{\star}\right)\right|\lesssim \frac{\log{(\log{t})}}{\log{t}}.
\end{align}
\end{lemma}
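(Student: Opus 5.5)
The plan is to write $z_k=z_0+Z_k=z_0+\rho_k u_k$ for $k=1,2,3$ and to control the three factors separately. For $z_0$ I will use Lemma \ref{z0ugokanai}, which gives $z_0=z_\infty+O((\log t)^{-2})$. For the radii, $\rho_k=L+a_k$ with $|a_k|\lesssim(\log t)^{-3}$ by Lemma \ref{aknaiseki}, and $L$ is expanded by \eqref{Lesti} with constant $c_\star$. The remaining, and main, task is to show that each direction $u_k$ converges to a limit $\omega_k\in S^{d-1}$ fast enough. Multiplying $u_k-\omega_k$ by $\rho_k\sim\log t$ must still leave an error $O(\log(\log t)/\log t)$, so I need $|u_k-\omega_k|\lesssim(\log t)^{-2}$ or better.

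\textbf{Convergence of $u_k$.} The naive route goes through \eqref{cesver2} and the relation $|u_1+u_2+u_3|^2=2(d_{12}+d_{13}+d_{23})\lesssim(\log t)^{-3}$. It only gives $|\dot u_k|\lesssim t^{-1}(\log t)^{-5/2}$, hence $|u_k-\omega_k|\lesssim(\log t)^{-3/2}$, which is too weak after multiplying by $\rho_k$. Instead I will use the linear system \eqref{Zkei} together with $W=Z_1+Z_2+Z_3$. The proof of Lemma \ref{z0ugokanai} established $|W|\lesssim(\log t)^{-2}$. Therefore
\begin{align*}
\dot Z_1=\frac{\mathcal F(L)}{L}(Z_1+W)+O\Big(\frac{1}{t(\log t)^3}\Big)=\frac{\mathcal F(L)}{L}Z_1+O\Big(\frac{1}{t(\log t)^3}\Big),
\end{align*}
using $\mathcal F(L)/L\sim(t\log t)^{-1}$, and similarly for $Z_2,Z_3$. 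The leading term is radial, so it is annihilated by the projection in $\dot u_k=\rho_k^{-1}\bigl(\dot Z_k-(u_k\cdot\dot Z_k)u_k\bigr)$. This yields $|\dot u_k|\lesssim t^{-1}(\log t)^{-4}$, which is integrable. Hence $u_k(t)\to\omega_k\in S^{d-1}$ with $|u_k(t)-\omega_k|\lesssim\int_t^\infty\frac{ds}{s(\log s)^4}\lesssim(\log t)^{-3}$.

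\textbf{Equilateral triple.} The identity $|u_1+u_2+u_3|^2=3+2(c_{12}+c_{13}+c_{23})=2(d_{12}+d_{13}+d_{23})$ combined with \eqref{kituiakdjkes} gives $u_1+u_2+u_3\to0$. Thus $\omega_1+\omega_2+\omega_3=0$, and the $\omega_k$ form an equilateral triple in the sense of Definition \ref{seisankakukei}.

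\textbf{Assembly.} Substituting the three estimates gives
\begin{align*}
z_k=z_\infty+(L+a_k)\,(\omega_k+O((\log t)^{-3}))+O((\log t)^{-2})=z_\infty+L\,\omega_k+O((\log t)^{-2}).
\end{align*}
Replacing $L$ by $\log t-\frac{d-1}{2}\log(\log t)+c_\star$ via \eqref{Lesti} costs $O(\log(\log t)/\log t)$, which dominates the other errors and gives the claim. The step I expect to be the crux is the improved bound on $\dot u_k$: it hinges on exploiting the smallness of $W$ so that the $O(\mathcal F(L)/L)$ part of $\dot Z_k$ is exactly parallel to $Z_k$. If $|W|\lesssim(\log t)^{-2}$ were not available, I would instead bootstrap $|u_1+u_2+u_3|$ directly.
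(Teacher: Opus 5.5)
Your proposal is correct and follows the paper's proof: write $z_k=z_0+\rho_k u_k$, use the bound $|W|\lesssim(\log t)^{-2}$ from the proof of Lemma \ref{z0ugokanai} so that the leading part of $\dot Z_k$ is parallel to $u_k$, deduce $|\dot u_k|\lesssim t^{-1}(\log t)^{-4}$ and hence $|u_k-\omega_k|\lesssim(\log t)^{-3}$, and then assemble the estimate using Lemma \ref{aknaiseki} and \eqref{Lesti}. The only deviation is in obtaining $\omega_1+\omega_2+\omega_3=0$: you use the identity $|u_1+u_2+u_3|^2=2(d_{12}+d_{13}+d_{23})$ together with Lemma \ref{aknaiseki}, whereas the paper derives it from the boundedness of the barycenter (Lemma \ref{jusines}) once the expansion of $z_k$ is established; both arguments are valid.
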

\begin{proof}
By Lemma \ref{daijisiki2} and Lemma \ref{z0ugokanai}, we have for $k=1,2,3$
\begin{align*}
\dot{Z}_k&=\mathcal{F}(\rho_k)u_k+O\left(\frac{1}{t(\log{t})^3}\right),
\\
\dot{\rho}_k&=\mathcal{F}(\rho_k)+O\left(\frac{1}{t(\log{t})^3}\right).
\end{align*}
Moreover, we have
\begin{align*}
\dot{u}_k&=\frac{\dot{Z}_k-\dot{\rho}_ku_k}{\rho_k}
\\
&=O\left(\frac{1}{t(\log{t})^4}\right).
\end{align*}
Therefore, there exists $\omega_k\in S^{d-1}$ such that $\lim_{t\to\infty}u_k(t)=\omega_k$. Then, we obtain
\begin{align*}
\left|u_k(t)-\omega_k\right|&\lesssim \int_t^{\infty} \frac{ds}{s(\log{s})^4}
\\
&\lesssim \frac{1}{(\log{t})^3}.
\end{align*}
By Lemma \ref{aknaiseki} and Lemma \ref{z0ugokanai} and \eqref{Lesti}, we obtain for $k=1,2,3$
\begin{align}\label{saigonitukau}
\begin{aligned}
z_k(t)&=z_0(t)+\rho_k(t)u_k(t)
\\
&=z_{\infty}+\rho_k(t)\omega_k+O\left(\frac{1}{(\log{t})^2}\right)
\\
&=z_{\infty}+\omega_k\left(\log{t}-\frac{d-1}{2}\log{(\log{t})}+c_{\star}\right)+O\left(\frac{\log{(\log{t})}}{\log{t}}\right).
\end{aligned}
\end{align}
In particular, by Lemma \ref{jusines}, $\frac{z_0+z_1+z_2+z_3}{4}$ is bounded. On the other hand, by Lemma \ref{z0ugokanai} and \eqref{saigonitukau}, we have for $t\gg 1$
\begin{align*}
\frac{z_0(t)+z_1(t)+z_2(t)+z_3(t)}{4}&=z_{\infty}+\frac{\omega_1+\omega_2+\omega_3}{4}\left(\log{t}-\frac{d-1}{2}\log{(\log{t})}+c_{\star}\right)
\\
&\quad+O\left(\frac{\log{(\log{t})}}{\log{t}}\right).
\end{align*}
Therefore, $\omega_1,\omega_2,\omega_3$ satisfy
\begin{align*}
\omega_1+\omega_2+\omega_3=0,
\end{align*}
which implies $\omega_1,\omega_2,\omega_3$ form an equilateral triple. Therefore we complete the proof.

\end{proof}

Finally, we prove Theorem \ref{maintheorem}.

\begin{proof}[Proof of Theorem \ref{maintheorem}]
By Lemma \ref{modKsoli}, there exist $C^1$ functions $z_0,z_1,z_2,z_3:[0,\infty)\to \mathbb{R}^d$ such that
\begin{align*}
\lim_{t\to\infty}\left\| u(t)-\left(Q(\cdot-z_0(t))-\sum_{k=1}^3 Q(\cdot-z_k(t))\right)\right\|_{H^1}+\|{\partial}_tu(t)\|_{L^2}=0, 
\\
\lim_{t\to\infty}\left(\min_{0\leq j<k\leq 3}|z_k(t)-z_j(t)|\right)=\infty,
\end{align*}
and we have for $1\leq l\leq d$, $0\leq i\leq 3$, and $t\gg 1$,
\begin{align*}
\int_{\mathbb{R}^d} \Bigl\{ {\partial}_tu(t)+2\alpha\Bigl(u(t)-Q(\cdot-z_0(t))+\sum_{k=1}^3 Q(\cdot-z_k(t))\Bigr)\Bigr\}
\, {\partial}_lQ(\cdot-z_i(t))=0.
\end{align*}
In this case, we apply the preceding analysis to this choice of $z_0,z_1,z_2,z_3$.
First, by Proposition \ref{tildeDnagai}, \eqref{Vrep} holds true. Therefore, by Lemma \ref{epzes} and Lemma \ref{Dlogtorder}, we have for $t\gg 1$
\begin{align*}
\left\| u(t)-\left(Q(\cdot-z_0(t))-\sum_{k=1}^3 Q(\cdot-z_k(t))\right)\right\|_{H^1}+\|{\partial}_tu(t)\|_{L^2}\lesssim t^{-1},
\end{align*}
which is \eqref{3soli}. Choosing $c_0=c_{\star}$, the asymptotic behaviors of $z_0,z_1,z_2,z_3$ satisfy \eqref{zestheorem} by Lemma \ref{modsitenai}. This completes the proof of  Theorem \ref{maintheorem}.
\end{proof}

\end{document}